\numberwithin{equation}{section}
\newtheorem{lemma}[equation]{Lemma}
\newtheorem{prop}[equation]{Proposition}
\newtheorem{example}[equation]{Example}
\theoremstyle{definition}
\newtheorem{remark}[equation]{Remark}
\newtheorem{definition}[equation]{Definition}
\newcommand{\F}{\mathcal{F}}
\newcommand{\E}{\mathcal{E}}
\renewcommand{\L}{\mathcal{L}}
\newcommand{\M}{\mathcal{M}}
\newcommand{\N}{\mathcal{N}}
\newcommand{\D}{\mathbf{D}}
\renewcommand{\H}{\mathcal{H}}
\newcommand{\Hom}{\operatorname{Hom}}
\newcommand{\Aut}{\operatorname{Aut}}
\newcommand{\Inn}{\operatorname{Inn}}
\newcommand{\m}{\mathcal}
\newcommand{\ov}{\overline}
\newcommand{\id}{\operatorname{id}}
\newcommand{\One}{\operatorname{\mathbf{1}}}
\newcommand{\W}{\mathbf{W}}
\def \<{\langle }
\def \>{\rangle }
\renewcommand{\phi}{\varphi}
\title[Direct and central products of localities]{Direct and central products of localities}
\author[E.~Henke]{Ellen Henke}
\address{Institute of Mathematics, 
University of Aberdeen, Fraser Noble Building, Aberdeen AB24 3UE, U.K.}
\email{ellen.henke@abdn.ac.uk}
\begin{document}

\begin{abstract}
We develop a theory of direct and central products of partial groups and localities.
\end{abstract}

\maketitle

\textbf{Keywords:} Fusion systems; localities; transporter systems.

\smallskip

\textbf{Subject classification:} 20N99, 20D20

\bigskip

\section{Introduction}

Partial groups and localities were introduced by Chermak \cite{Chermak:2013}, in the context of his proof of the existence and uniqueness of centric linking systems. Roughly speaking, a partial group is a set $\L$ together with a product which is only defined on certain words in $\L$, and an inversion map $\L\rightarrow \L$ which is an involutory bijection, subject to certain axioms. A locality is a partial group equipped with some extra structure which makes it possible to define the fusion system of a locality. Essentially, localities are ``the same'' as the transporter systems of Oliver and Ventura \cite{Oliver/Ventura}; see the appendix to \cite{Chermak:2013}. As centric linking systems are special cases of transporter systems, the existence of centric linking systems implies that there is a locality attached to every fusion system. It is work in progress of Chermak and the author of this paper to build a local theory of localities similar to the local theory of fusion systems as developed by Aschbacher \cite{Aschbacher:2008}, \cite{Aschbacher:2011} based on earlier work of many other authors. The results we prove in this paper fit into this program. 

\smallskip

For fusion systems, a relatively canonical definition of an external direct product was already introduced by Broto, Levi and Oliver \cite{BLO2}. Building on this definition, Aschbacher \cite{Aschbacher:2011} introduced central products of fusion systems. In this paper, we develop a theory of direct and central products of partial groups and localities. Most of our definitions are again quite canonical. After some preliminaries, we introduce in  Section~\ref{SectionDirectProductExternalPartialGroups} direct products of partial groups and prove basic properties of these. This allows us in Section~\ref{SectionDirectProductExternalLocalities} to define external direct and central products of localities. In Section~\ref{SectionInternal} we introduce internal direct and central products of partial groups and localities, and we prove results relating them to their external counterparts. 

\smallskip

Of special interest are localities corresponding to centric linking systems or, more generally, linking localities as introduced in \cite{Henke:2015}. We prove that an external or internal direct product of two  localities is a linking locality if and only if the two localities we started with are linking localities. A similar result holds for central products. The reader is referred to  Lemma~\ref{DirectProductLinkingLocality}, Lemma~\ref{ExternalCentralProductLemma}(b) and Lemma~\ref{InternalCentralProductLinkingLocality} for the precise statements of the results.

\smallskip

Given a linking locality over a saturated fusion system $\F$, it is work in progress of Chermak and the author of this paper to prove that there is a one-to-one correspondence between the normal subsystems of $\F$ and the partial normal subgroups of the locality. A significant part of the theory developed in this paper is needed in the proof. In particular, at the end we prove   Proposition~\ref{LastProposition} with this application in mind.

\smallskip

Throughout, $p$ is always a prime.

\section{Background on fusion systems}

\subsection{Morphisms of fusion systems}

We refer the reader to \cite[Part~I]{Aschbacher/Kessar/Oliver:2011} for background on fusion systems. We moreover introduce the following terminology: Let $\F$ and $\F'$ be fusion systems over $S$ and $S'$ respectively. Then we say that a group homomorphism $\alpha\colon S\rightarrow S'$ induces a morphism from $\F$ to $\F'$ if, for each $\phi\in\Hom_\F(P,Q)$, there exists $\psi\in\Hom_{\F'}(P\alpha,Q\alpha)$ such that $(\alpha|_P)\psi=\phi(\alpha|_Q)$. Such $\psi$ is then uniquely determined, so $\alpha$ induces a map $\alpha_{P,Q}\colon\Hom_\F(P,Q)\rightarrow \Hom_{\F'}(P\alpha,Q\alpha)$. Together with the map $P\mapsto P\alpha$ from the set of objects of $\F$ to the set of objects of $\F'$ this gives a functor from $\F$ to $\F'$. Moreover, $\alpha$ together with the maps $\alpha_{P,Q}$ ($P,Q\leq S$) is a morphism of fusion systems in the sense of \cite[Definition~II.2.2]{Aschbacher/Kessar/Oliver:2011}. We call $(\alpha,\alpha_{P,Q}\colon P,Q\leq S)$ the morphism induced by $\alpha$. If $\E$ is a subsystem of $\F$ on $T\leq S$, then we denote by $\E\alpha$ the subsystem of $\F'$ on $T\alpha$ which is the image of $\E$ under the functor $\alpha^*$.

\smallskip

We say that $\alpha$ induces an epimorphism from $\F$ to $\F'$ if $(\alpha,\alpha_{P,Q}\colon P,Q\leq S)$ is a surjective morphism of fusion systems. This means that $\alpha$ is surjective as a map $S\rightarrow S'$ and, for all $P,Q\leq S$ with $\ker(\alpha)\leq P\cap Q$, the map $\alpha_{P,Q}$ is surjective, i.e. for each $\psi\in\Hom_{\F'}(P\alpha,Q\alpha)$, there exists $\phi\in\Hom_\F(P,Q)$ with $(\alpha|_P)\psi=\phi(\alpha|_Q)$. 
If $\alpha$ is in addition injective then we say that $\alpha$ induces an isomorphism from $\F$ to $\F'$. If $\alpha$ induces an isomorphism from $\F$ to $\F'$ then observe that the inverse map $\alpha^{-1}$ induces an isomorphism from $\F'$ to $\F$. Note also that the following property follows directly from the definitions:

\begin{remark}\label{EpiConjugates}
Let $\F$ and $\F'$ be fusion systems over $S$ and $S'$ respectively. Suppose that $\alpha$ induces an epimorphism from $\F$ to $\F'$. Let $\ker(\alpha)\leq P\leq S$ and $Q:=P\alpha$. Then $Q^\F=\{\hat{P}\alpha\colon \hat{P}\in P^\F\}$.
\end{remark}

The kernel of a group homomorphism $S\rightarrow S'$ which induces a morphism from the fusion system $\F$ to the fusion system $\F'$ is  always a strongly closed subgroup of $\F$. On the other hand, if $R$ is a strongly closed subgroup of $\F$, then there is a factor system $\F/R$ defined and the natural group homomorphism $S\rightarrow S/R$ is an epimorphism; see \cite[Section~II.5]{Aschbacher/Kessar/Oliver:2011} for details. If $\F$ is saturated and there exists an epimorphism from $\F$ to $\F'$, then $\F'$ is saturated. In particular, $\F/R$ is saturated for every strongly closed subgroup $R$ of $\F$.

\smallskip

If $\alpha$ induces an epimorphism from $\F$ to $\F'$ then one checks easily that the induced map
\[S/\ker(\alpha)\rightarrow S', \ker(\alpha)s\mapsto s\alpha\]
induces an isomorphism from $\F/\ker(\alpha)$ to $\F'$.

\begin{lemma}\label{CentralQuotient}
Let $\F$ and $\F'$ be saturated fusion systems over $S$ and $S'$ respectively, and suppose $\alpha\colon S\rightarrow S'$ induces an epimorphism from $\F$ to $\F'$ such that $\ker(\alpha)\leq Z(\F)$. Let $P\leq S$. Then the following hold:
 \begin{itemize}
 \item [(a)] We have $P\leq \F^{cr}$ if and only if $\ker(\alpha)\leq P$ and $P\alpha\in(\F')^{cr}$.
 \item [(b)] We have $P\in\F^s$ if and only if $P\alpha\in (\F')^s$.  
\end{itemize}  
\end{lemma}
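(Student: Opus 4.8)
The plan is to prove (a) by transferring each of the three conditions defining membership in $\F^{cr}$ (containing $\ker(\alpha)$, $\F$-centricity, $\F$-radicality) across the epimorphism, and then to deduce (b) from (a) via the fact that a fully normalized subgroup lies in $\F^s$ precisely when its $\F$-normalizer is constrained. Throughout set $K:=\ker(\alpha)$. Since $K\leq Z(\F)$, the subgroup $K$ is strongly closed, is fixed pointwise by every morphism of $\F$, and lies in $Z(S)$, so $K\leq N_S(P)\cap Z(P)$ whenever $K\leq P\leq S$; in that case Remark~\ref{EpiConjugates} shows $R\mapsto R\alpha$ is a bijection $P^\F\to(P\alpha)^{\F'}$ with $K\leq R$ for every $R\in P^\F$, and $\alpha$ induces a surjective homomorphism $\theta_R\colon\Aut_\F(R)\to\Aut_{\F'}(R\alpha)$. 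I would record two facts about $\theta_R$. First, $\ker(\theta_R)=\{\psi\in\Aut_\F(R):\psi\text{ acts trivially on }R/K\}$ is a normal $p$-subgroup of $\Aut_\F(R)$ — normal because every $\F$-automorphism of $R$ stabilizes $K$ and is trivial on it, a $p$-group because the automorphisms of $R$ trivial on both $R/K$ and $K$ form a group isomorphic to $\Hom(R/K,K)$ — so $\ker(\theta_R)\leq O_p(\Aut_\F(R))$. Second, $\theta_R$ sends normal $p$-subgroups to normal $p$-subgroups and pulls them back to normal $p$-subgroups (the preimage of such being an extension of a $p$-group by $\ker(\theta_R)$); hence $\theta_R$ induces an inclusion-preserving bijection between the normal $p$-subgroups of $\Aut_\F(R)$ containing $\ker(\theta_R)$ and those of $\Aut_{\F'}(R\alpha)$, matching $O_p$ with $O_p$. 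Since all the properties in question are $\F$-conjugation invariant on both sides, I may and will assume $P$ is fully normalized in $\F$, so $\Aut_S(P)\in\Syl_p(\Aut_\F(P))$.

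For (a): if $P\in\F^{cr}$ then $C_S(P)\leq P$, so $K\leq Z(S)\leq C_S(P)\leq P$; assume henceforth $K\leq P$. \emph{Centricity.} If $P\alpha$ is $\F'$-centric, then for $R\in P^\F$ and $x\in C_S(R)$ we get $x\alpha\in C_{S'}(R\alpha)\leq R\alpha$, so $x\in RK=R$, whence $P$ is $\F$-centric. Conversely, if $P$ is $\F$-centric and $\F$-radical, then for $R\in P^\F$ (also $\F$-centric and $\F$-radical) and $y\in C_{S'}(R\alpha)$, a lift $x$ of $y$ satisfies $[x,R]\leq K$, so $c_x|_R\in\ker(\theta_R)\leq O_p(\Aut_\F(R))=\Inn(R)$, which forces $x\in R$ and hence $y\in R\alpha$; so $P\alpha$ is $\F'$-centric. \emph{Radicality.} If $P$ is $\F$-radical then $\ker(\theta_P)\leq O_p(\Aut_\F(P))=\Inn(P)$, so $O_p(\Aut_{\F'}(P\alpha))=\theta_P(\Inn(P))=\Inn(P\alpha)$; conversely, if $P\alpha$ is $\F'$-centric and $\F'$-radical, then $O_p(\Aut_\F(P))=\theta_P^{-1}(\Inn(P\alpha))=\Inn(P)\ker(\theta_P)$, and each $\psi\in\ker(\theta_P)$ — lying in the normal $p$-subgroup $\ker(\theta_P)\leq\Aut_S(P)$, as $P$ is fully normalized — equals $c_x|_P$ for some $x\in N_S(P)$; then $\theta_P(\psi)=\id$ gives $x\alpha\in C_{S'}(P\alpha)\leq P\alpha$, so $x\in P$ and $\psi\in\Inn(P)$, hence $O_p(\Aut_\F(P))=\Inn(P)$. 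Combining the two correspondences yields (a).

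For (b): I would use that a fully normalized $P$ lies in $\F^s$ iff $N_\F(P)$ is constrained, i.e. iff $O_p(N_\F(P))$ is centric in $N_\F(P)$. First reduce to the case $K\leq P$: since $\F^s$ is closed under $\F$-conjugacy and overgroups and $\F$-morphisms extend over the central subgroup $Z(\F)$ (so $\F$-conjugacy of $P$ induces $\F$-conjugacy of $PK$, and $P\alpha=PK/K$), one obtains $P\in\F^s\iff PK\in\F^s$ and the analogous statement in $\F'$, so we may replace $P$ by $PK$; replacing $P$ further by an $\F$-conjugate we may assume $PK$ (now containing $K$) is fully normalized in $\F$, whereupon $P\alpha=PK/K$ is fully normalized in $\F'$ since $N_{S'}(R\alpha)=N_S(R)/K$ for every $R$ in the relevant $\F$-class. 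It then remains to show, for $K\leq P$ fully normalized, that $N_\F(P)$ is constrained iff $N_{\F'}(P\alpha)$ is. For this I use the routine facts that $\alpha$ restricts to an epimorphism $N_\F(P)\to N_{\F'}(P\alpha)$ with kernel $K$ (so $N_{\F'}(P\alpha)\cong N_\F(P)/K$), that $K\leq Z(N_\F(P))$, that $O_p(\G/K)=O_p(\G)/K$ whenever $K\leq Z(\G)$, and that $O_p$ of a constrained saturated fusion system is centric and radical in it. Applying (a) inside $\G:=N_\F(P)$ to the subgroup $O_p(\G)$ — which contains $K$ because $Z(\G)\leq O_p(\G)$ — then shows that $O_p(\G)$ is $\G$-centric iff $O_p(\G)/K=O_p(\G/K)$ is $(\G/K)$-centric, i.e. that $\G$ is constrained iff $\G/K$ is. This gives (b).

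The step I expect to be the main obstacle is the forward direction of the centricity correspondence in (a): $P\in\F^{cr}$ does not force $Z(P\alpha)=Z(P)\alpha$ (for instance when $P$ is nonabelian but $P/K$ is abelian), so centricity cannot be pushed forward by hand, and one is compelled to use $\F$-radicality of $P$ through the containment $\ker(\theta_R)\leq O_p(\Aut_\F(R))=\Inn(R)$. A secondary technical nuisance is the fully-normalized bookkeeping in (b) when $K\not\leq P$, dealt with by passing from $P$ to $PK$.
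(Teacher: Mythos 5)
Your overall route is genuinely different from the paper's. The paper's proof is a two-step reduction: the map $\ov{\alpha}$ identifies $\F'$ with the central quotient $\F/\ker(\alpha)$, and then both parts are quoted from \cite[Lemma~3.6]{Henke:2015} and \cite[Lemma~9.1]{Henke:2015}, which say precisely that isomorphisms and central quotients of saturated fusion systems preserve the classes of centric radical and of subcentric subgroups. You instead re-prove this content from scratch. For part (a) this succeeds: the analysis of the surjection $\theta_R\colon\Aut_\F(R)\rightarrow\Aut_{\F'}(R\alpha)$, the identification of $\ker(\theta_R)$ with the automorphisms trivial on $R/K$ (a normal $p$-subgroup, hence contained in $O_p(\Aut_\F(R))$ and, once $P$ is taken fully normalized, in $\Aut_S(P)$), and the correct observation that centricity can only be pushed forward with the help of radicality together give a complete, self-contained proof of (a); it is longer than the paper's citation but makes the mechanism explicit.

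Part (b), however, has a genuine gap at the reduction to the case $K\leq P$. You need both implications of $P\in\F^s\Leftrightarrow PK\in\F^s$. The forward one is overgroup-closure of $\F^s$, but the backward one, $PK\in\F^s\Rightarrow P\in\F^s$, does not follow from anything you state: closure under conjugation and overgroups, together with the fact that the $\F$-conjugates of $PK$ are exactly the subgroups $QK$ with $Q\in P^\F$, give no downward closure, and your normalizer-based setup does not repair it, since for a fully normalized conjugate $Q$ of $P$ the subgroup $QK$ need not be fully normalized and $N_\F(Q)$, $N_\F(QK)$ are systems over different $p$-groups. The implication is true, but it is itself a nontrivial statement about subcentric subgroups; it is in effect a special case of \cite[Lemma~9.1]{Henke:2015} (applied to $P$ and to $PK$, which have the same image in $\F/K$), the very result the paper's proof cites, so asserting it is where the real difficulty of (b) is hidden. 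It can be repaired either by quoting that lemma, or within your framework by using the centralizer characterization of subcentric subgroups from \cite{Henke:2015} (for a fully centralized conjugate $Q$, one has $Q\in\F^s$ if and only if $C_\F(Q)$ is constrained), since $C_S(Q)=C_S(QK)$ and $C_\F(Q)=C_\F(QK)$ when $K\leq Z(\F)$. The remaining ingredients you label routine (the constrained-normalizer characterization of $\F^s$, the equality $O_p(N_\F(P)/K)=O_p(N_\F(P))/K$ for central $K$, centricity and radicality of $O_p$ in a constrained system, and $N_{\F'}(P\alpha)\cong N_\F(P)/K$) are indeed available, so the rest of (b) is sound once the reduction is supplied.
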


\begin{proof}
By \cite[Lemma~3.6]{Henke:2015}, an isomorphism between two saturated fusion systems induces a bijection between the sets of subcentric subgroups of these two fusion systems. Similarly, such an isomorphism induces a bijection between the sets of centric radical subgroups of these two fusion systems. 

\smallskip

Set $Z:=\ker(\alpha)$. Then the map $\ov{\alpha}\colon \F/Z\rightarrow \F',\ker(\alpha)s\mapsto s\alpha$ induces an isomorphism between the two saturated fusion systems $\F/Z$ and $\F'$. Hence, $\ov{\alpha}$ induces a bijection between $(\F/Z)^s$ and $(\F')^s$, and between $(\F/Z)^{cr}$ and $(\F')^{cr}$. By \cite[Lemma~9.1]{Henke:2015}, $P\in\F^s$ if and only if $PZ/Z\in(\F/Z)^s$, and $P\in\F^{cr}$ if and only if $Z\leq P$ and $P/Z\in(\F/Z)^{cr}$. This implies the assertion.
\end{proof}

\subsection{External direct and central products of fusion systems}\label{DirectProductFusionSystemsSection}\hspace{5.0 cm}

\bigskip

\textbf{For the remainder of this section let $\F_i$ be a fusion system on $S_i$ for $i=1,2$.}

\bigskip

For each $i=1,2$ write $\pi_i\colon S_1\times S_2\rightarrow S_i,(s_1,s_2)\mapsto s_i$ for the projection map. 
Given $P_i,Q_i\leq S_i$ and $\phi_i\in\Hom_{\F_i}(P_i,Q_i)$ for each $i=1,2$, define an injective group homomorphism $\phi_1\times \phi_2\colon P_1\times P_2\rightarrow Q_1\times Q_2$ by
\[(x_1,x_2)(\phi_1\times \phi_2)=(x_1\phi_1,x_2\phi_2)\]
for all $x_1\in P_1$ and $x_2\in P_2$. The \textit{direct product} $\F_1\times \F_2$ is the fusion system on $S_1\times S_2$ which is generated by the maps of the form $\phi_1\times \phi_2$ with $P_i,Q_i\leq S_i$ and $\phi_i\in\Hom_{\F_i}(P_i,Q_i)$ for $i=1,2$. This means that every morphism in $\Hom_{\F_1\times \F_2}(P,Q)$ is of the form $(\phi_1\times\phi_2)|_P$ where $\phi_i\in \Hom_{\F_i}(P\pi_i,Q\pi_i)$ for $i=1,2$.

\smallskip
 
For $i=1,2$ let $\iota_i\colon S_i\rightarrow S_1\times S_2$ be the inclusion map, i.e. $s\iota_1=(s,1)$ and $s\iota_2=(1,s)$. Note that $\iota_i$ induces a morphism from $\F_i$ to $\F_1\times \F_2$. More precisely, the morphism induced by $\iota_1$ takes $\phi_1\in\Hom_{\F_1}(P,Q)$ to $\phi_1\times \id_{\{1\}}\in\Hom_{\F_1\times\F_2}(P\iota_1,Q\iota_1)$ for all $P,Q\leq S_1$, and the morphism induced by $\iota_2$ takes $\phi_2\in\Hom_{\F_2}(P,Q)$ to $\id_{\{1\}}\times \phi_2\in\Hom_{\F_1\times\F_2}(P\iota_2,Q\iota_2)$ for all $P,Q\leq S_2$. For $i=1,2$, we call the image $\F_i\iota_i$ the canonical image of $\F_i$ in $\F_1\times \F_2$ and denote it by $\hat{\F}_i$. Moreover, we set $\hat{S}_i=S_i\iota_i$. As $\iota_i$ is injective, $\hat{\F}_i\cong\F_i$ for $i=1,2$.

\begin{lemma}\label{DirectProductFusionSystems}
Let $\F=\F_1\times\F_2$ be the direct product of $\F_1$ and $\F_2$. Let $P_i\leq S_i$ for $i=1,2$.
\begin{itemize}
 \item [(a)] We have $P_1\times P_2\in\F^c$ if and only if $P_i\in\F_i^c$ for $i=1,2$.
 \item [(b)] $\Aut_\F(P_1\times P_2)\cong \Aut_{\F_1}(P_1)\times\Aut_{\F_2}(P_2)$ and $P_1\times P_2$ is radical in $\F$ if and only if $P_i$ is radical in $\F_i$ for $i=1,2$.
 \item [(c)] $\F^{cr}=\{R_1\times R_2\colon R_i\in\F_i^{cr}\}$.
 \item [(d)] We have $(P_1\times P_2)^\F=\{Q_1\times Q_2\colon Q_i\in P_i^{\F_i}\mbox{ for }i=1,2\}$. Moreover, $P_1\times P_2$ is fully $\F$-normalized if and only if $P_i$ is fully $\F_i$-normalized for each $i=1,2$.
 \item [(e)] If $P_i\in\F_i^s$ for $i=1,2$ then $P_1\times P_2\in\F^s$.
 \item [(f)] For $i=1,2$ let $\Delta_i$ be a set of subgroups of $S_i$ such that $\Delta_i$ is closed under taking $\F_i$-conjugates. Then $\Gamma:=\{R_1\times R_2\colon R_i\in\Delta_i\mbox{ for each }i=1,2\}$ is closed under taking $\F$-conjugates.
\end{itemize}
\end{lemma}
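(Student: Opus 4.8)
The plan is to make the first assertion of~(d) the engine and deduce the rest from it. From the stated description of $\Hom_\F$, every $\phi\in\Hom_\F(P_1\times P_2,Q)$ is the restriction of some $\phi_1\times\phi_2$ with $\phi_i\in\Hom_{\F_i}(P_i,Q\pi_i)$ (injective, as every fusion system morphism is), so its image is $P_1\phi_1\times P_2\phi_2$ with $P_i\phi_i\cong P_i$; conversely, if $\phi_i\colon P_i\to Q_i$ is an $\F_i$-isomorphism for $i=1,2$ then $\phi_1\times\phi_2$ is an $\F$-isomorphism $P_1\times P_2\to Q_1\times Q_2$. This yields $(P_1\times P_2)^\F=\{Q_1\times Q_2\colon Q_i\in P_i^{\F_i}\}$, which is the first sentence of~(d); and it instantly gives~(f), since any $\F$-conjugate of $R_1\times R_2\in\Gamma$ is $Q_1\times Q_2$ with $Q_i\in R_i^{\F_i}\subseteq\Delta_i$. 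Along the way I would also record the elementary identities $C_{S_1\times S_2}(P_1\times P_2)=C_{S_1}(P_1)\times C_{S_2}(P_2)$ and $N_{S_1\times S_2}(P_1\times P_2)=N_{S_1}(P_1)\times N_{S_2}(P_2)$.

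Part~(a) then follows from the conjugacy description together with the centralizer identity: if each $P_i\in\F_i^c$ and $Q_1\times Q_2\in(P_1\times P_2)^\F$, then $C_{S_1\times S_2}(Q_1\times Q_2)=Z(Q_1)\times Z(Q_2)\le Q_1\times Q_2$; conversely, specializing to conjugates of the shape $Q_1\times P_2$ forces $C_{S_1}(Q_1)\le Q_1$ for every $Q_1\in P_1^{\F_1}$. For~(b), the description of $\Hom_\F$ shows that $(\phi_1,\phi_2)\mapsto(\phi_1\times\phi_2)|_{P_1\times P_2}$ is a bijective group homomorphism $\Aut_{\F_1}(P_1)\times\Aut_{\F_2}(P_2)\to\Aut_\F(P_1\times P_2)$ --- the $\phi_i$ that occur must map $P_i$ onto $P_i$, since the image $P_1\phi_1\times P_2\phi_2$ equals $P_1\times P_2$ --- and it carries $\Inn(P_1)\times\Inn(P_2)$ onto $\Inn(P_1\times P_2)$; since $O_p(A_1\times A_2)=O_p(A_1)\times O_p(A_2)$ for finite groups, $O_p(\Aut_\F(P_1\times P_2))=\Inn(P_1\times P_2)$ iff $O_p(\Aut_{\F_i}(P_i))=\Inn(P_i)$ for both $i$, which is the radicality statement. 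The second sentence of~(d) uses the normalizer identity: $P_1\times P_2$ is fully $\F$-normalized iff $|N_{S_1}(P_1)|\,|N_{S_2}(P_2)|\ge|N_{S_1}(Q_1)|\,|N_{S_2}(Q_2)|$ for all $Q_i\in P_i^{\F_i}$, and since $Q_1$ and $Q_2$ range independently this holds iff each $P_i$ is fully $\F_i$-normalized.

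The one genuinely substantial step is~(c); what is needed beyond~(a) and~(b) is that an $\F$-centric radical subgroup $R\le S_1\times S_2$ is automatically the product $R\pi_1\times R\pi_2$, and I expect this to be the main obstacle. I would argue by contradiction. Write $R_i=R\pi_i$ and $K_2=\{y\in R_2\colon(1,y)\in R\}$, a normal subgroup of $R_2$, and suppose $R\ne R_1\times R_2$; interchanging the factors if necessary, assume $K_2\lneq R_2$. Since $R_2/K_2$ is a nontrivial $p$-group, the preimage $Z_2\le R_2$ of $Z(R_2/K_2)$ properly contains $K_2$, and a short computation shows $(1,z)$ normalizes $R$ exactly when $z\in Z_2$. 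Then $A:=\{c_{(1,z)}|_R\colon z\in Z_2\}$ is a $p$-subgroup of $\Aut_\F(R)$, and it is normalized by all of $\Aut_\F(R)$: any element of $\Aut_\F(R)$ is the restriction of some $\phi_1\times\phi_2$, which stabilizes $R\cap(1\times R_2)=1\times K_2$ and hence has $\phi_2$ normalizing $K_2$ (so $\phi_2$ stabilizes $Z_2$), and conjugating $c_{(1,z)}|_R$ by it produces $c_{(1,z\phi_2)}|_R\in A$. Hence $\langle\Inn(R),A\rangle$ is a normal $p$-subgroup of $\Aut_\F(R)$ that properly contains $\Inn(R)$: for $z\in Z_2\setminus K_2$ we have $(1,z)\notin R=R\cdot C_{S_1\times S_2}(R)$ (by $\F$-centricity of $R$), so $c_{(1,z)}|_R\notin\Inn(R)$. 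This contradicts $R$ being $\F$-radical. Granting this structural fact, both inclusions of~(c) follow from~(a) and~(b).

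For~(e) I would first pass to a fully normalized $\F$-conjugate of $P_1\times P_2$ --- legitimate since membership in $\F^s$ is conjugacy-invariant and, by~(d), $P_1\times P_2$ is fully $\F$-normalized iff each $P_i$ is fully $\F_i$-normalized --- so I may assume each $P_i$ is fully $\F_i$-normalized. Then I would identify the normalizer system $N_\F(P_1\times P_2)$ with the direct product $N_{\F_1}(P_1)\times N_{\F_2}(P_2)$ of the two saturated normalizer systems (again using that $\F$-morphisms split as products) and invoke that a direct product of constrained fusion systems is constrained, which reduces to $O_p(\G_1\times\G_2)=O_p(\G_1)\times O_p(\G_2)$ together with the centralizer identity. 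Matching this against the definition of $\F^s$ in~\cite{Henke:2015} then gives $P_1\times P_2\in\F^s$; here the only fiddly points are the bookkeeping with that definition and the identification of the normalizer subsystem as a product, both of which I expect to be routine.
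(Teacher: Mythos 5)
Your proposal is correct in substance and, for parts (a), (b), (d) and (f), runs essentially parallel to the paper: the paper also gets (b), (d), (f) from the splitting of $\F$-morphisms as $\phi_1\times\phi_2$ together with $N_{S_1\times S_2}(P_1\times P_2)=N_{S_1}(P_1)\times N_{S_2}(P_2)$ and $O_p(G_1\times G_2)=O_p(G_1)\times O_p(G_2)$; for (a) it simply cites Aschbacher, where you give the short centralizer computation. The genuine divergence is in (c) and (e). For (c), the paper obtains the key structural fact (every $\F$-centric $\F$-radical subgroup of $S_1\times S_2$ is a direct product $R_1\times R_2$) by citing Andersen--Oliver--Ventura, Lemma~3.1, and then deduces (c) from (a) and (b) exactly as you do. Your Goursat-style replacement is correct as written: $K_2\trianglelefteq R_2$, the elements $(1,z)$ with $z$ in the preimage $Z_2$ of $Z(R_2/K_2)$ normalize $R$, the resulting $p$-subgroup $A\le\Aut_S(R)\le\Aut_\F(R)$ is normalized by $\Aut_\F(R)$ because every $\F$-automorphism of $R$ restricts componentwise and stabilizes $1\times K_2$, and centricity ($C_S(R)\le R$) shows $c_{(1,z)}|_R\notin\Inn(R)$ for $z\in Z_2\setminus K_2$, contradicting radicality. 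This buys a self-contained argument at the cost of a page; the paper buys brevity with a citation.

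For (e) you take a genuinely different route, and it is the one place where you silently strengthen the hypotheses. The lemma (and the section-wide convention) does not assume $\F_1,\F_2$ saturated, and the definition of $\F^s$ used here asks that $O_p(N_\F(Q))$ be centric \emph{in $\F$} for fully normalized conjugates $Q$. The paper verifies exactly that: for $Q=Q_1\times Q_2$ fully normalized it identifies $N_\F(Q)=N_{\F_1}(Q_1)\times N_{\F_2}(Q_2)$, uses $O_p(N_\F(Q))=O_p(N_{\F_1}(Q_1))\times O_p(N_{\F_2}(Q_2))$, and then applies part (a) to see that this product of $\F_i$-centric subgroups is $\F$-centric. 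Your detour through ``the normalizer systems are saturated and constrained, hence so is their product, now match against the definition of $\F^s$'' needs the subcentric--constrained equivalence of \cite[Lemma~3.1]{Henke:2015}, saturation of the normalizer subsystems, and saturation of $\F_1\times\F_2$; none of this is available without assuming $\F_1,\F_2$ saturated. In all of the paper's applications the systems are saturated, so your argument does the job there, but as a proof of the lemma as stated you should either add the saturation hypothesis explicitly or replace the ``constrained'' step by the paper's direct verification via (a), which costs nothing extra. Also note that your reduction ``pass to a fully normalized conjugate'' must be read as treating an arbitrary fully normalized conjugate $Q_1\times Q_2$ (as the paper does), since the definition quantifies over all of them; with the splitting from (d) this is only a rephrasing, but it is worth saying.
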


\begin{proof}
Property (a) follows from \cite[(2.6)(2),(3)]{Aschbacher:2011}. 

\smallskip

By the definition of $\F=\F_1\times \F_2$, the elements of $\Aut_\F(P_1\times P_2)$ are the automorphisms of the form $\phi_1\times\phi_2$ with $\phi_i\in\Aut_{\F_i}(P_i)$. This implies $\Aut_\F(P_1\times P_2)\cong \Aut_{\F_1}(P_1)\times\Aut_{\F_2}(P_2)$. For any two finite groups $G_1$ and $G_2$, $O_p(G_1\times G_2)=O_p(G_1)\times O_p(G_2)$. So $O_p(\Aut_\F(P_1\times P_2))\cong O_p(\Aut_{\F_1}(P_1))\times O_p(\Aut_{\F_2}(P_2))$. As $\Inn(P_1\times P_2)\cong \Inn(P_1)\times \Inn(P_2)$, it follows that $P_1\times P_2$ is radical in $\F$ if and only if $P_i$ is radical in $\F_i$ for $i=1,2$. This proves (b). 

\smallskip

By \cite[Lemma~3.1]{Andersen/Oliver/Ventura:2012}, every $\F$-centric $\F$-radical subgroup is of the form $R_1\times R_2$ with $R_i\leq S_i$ for $i=1,2$. Hence, property (c) follows from (a) and (b).

\smallskip

It follows from the definition of $\F_1\times \F_2$ that $(P_1\times P_2)^\F=\{Q_1\times Q_2\colon Q_i\in P_i^{\F_i}\mbox{ for }i=1,2\}$. Since $N_{S_1\times S_2}(P_1\times P_2)=N_{S_1}(P_1)\times N_{S_2}(P_2)$, this implies (d). Property (f) is a direct consequence of the first part of (d).

\smallskip

For the proof of (e) assume now that $P_i\in\F_i^s$ for $i=1,2$. Let $Q\in (P_1\times P_2)^\F$ be fully normalized. By (d), $Q=Q_1\times Q_2$ where $Q_i\in P_i^{\F_i}\cap \F_i^f$ for $i=1,2$. By \cite[(2.5)]{Aschbacher:2011}, $N_\F(Q)=N_{\F_1}(Q_1)\times N_{\F_2}(Q_2)$. Hence, by \cite[Proposition~3.4]{Andersen/Oliver/Ventura:2012}, we have $O_p(N_\F(Q))=O_p(N_{\F_1}(Q_1))\times O_p(N_{\F_2}(Q_2))$. For $i=1,2$, $O_p(N_{\F_i}(Q_i))$ is centric in $\F_i$ as $P_i\in\F_i^s$. Hence, by (a), $O_p(N_\F(Q))$ is centric in $\F$. Thus $P_1\times P_2$ is subcentric.   
\end{proof}

It is straightforward to observe that  
\[Z(\F_1\times \F_2)=Z(\F_1)\times Z(\F_2).\]
For any subgroup $Z\leq Z(\F_1)\times Z(\F_2)=Z(\F_1\times \F_2)$ such that $Z\cap \hat{S}_i=1$ for $i=1,2$, we call $(\F_1\times \F_2)/Z$ the \textit{(external) central product} of $\F_1$ and $\F_2$ (over $Z$). We write $\F_1\times_Z\F_2$ for this external central product. Set $S_1\times_Z S_2:=(S_1\times S_2)/Z$. If
  $\theta\colon S_1\times S_2\rightarrow S_1\times_ZS_2$ is the natural epimorphism, then $\theta|_{\hat{S}_i}$ is injective and $\theta|_{\hat{S}_i}$ is by \cite[(2.9)(3)]{Aschbacher:2011} an isomorphism from $\hat{\F}_i$ to $\ov{\F_i}:=\hat{\F}_i\theta$.

\subsection{Internal central products of fusion systems}

Suppose now that $\F$ is a fusion system over $S$ containing the fusion systems $\F_1$ and $\F_2$ as subsystems. So in particular, $S_i\leq S$ for $i=1,2$. 

\smallskip

We say that $\F$ is the \textit{(internal) central product} of $\F_1$ and $\F_2$ if $S_1\cap S_2\leq Z(\F_i)$ for $i=1,2$ and the map $\alpha\colon S_1\times S_2\rightarrow S,\;(x_1,x_2)\mapsto x_1x_2$ induces an epimorphism from $\F_1\times\F_2$ to $\F$ with $\hat{\F}_i\alpha=\F_i$ for $i=1,2$. 

\smallskip

Note here that $\alpha$ being a group homomorphism is equivalent to $[S_1,S_2]=1$ inside of $S$. Moreover, $\alpha$ being surjective is equivalent to $S=S_1S_2$. Suppose now that $\F$ is the internal central product of the subsystems $\F_1$ and $\F_2$. Set $Z:=\ker(\alpha)$. Then $\alpha$ induces an isomorphism of groups $\ov{\alpha}\colon S_1\times_ZS_2\rightarrow S$ via $xZ\mapsto x\alpha$. If $(x_1,x_2)\in Z$ then $x_1=x_2^{-1}\in S_1\cap S_2\leq Z(\F_i)$ for $i=1,2$. Hence, $Z\leq Z(\F_1)\times Z(\F_2)$. By definition of $\alpha$, $Z\cap \hat{S}_i=1$ for $i=1,2$. Therefore, $(\F_1\times\F_2)/Z$ is an external central product of $\F_1$ and $\F_2$. As $\alpha$ induces an epimorphism from $\F_1\times\F_2$ to $\F$ with $\hat{\F}_i\alpha=\F_i$ for $i=1,2$, $\ov{\alpha}$ induces an epimorphism from $\F_1\times_Z\F_2$ to $\F$ with $\ov{\F_i}\ov{\alpha}=\F_i$. As $\ov{\alpha}$ is a group isomorphism, $\ov{\alpha}$ is an isomorphism of fusion systems. So $\F$ is in a canonical way isomorphism to an external central product of $\F_1$ and $\F_2$.

\begin{lemma}\label{CentralProductFusionSystems}
 Let $\F$ be the internal central product of two subsystems $\F_1$ and $\F_2$. 
\begin{itemize}
 \item [(a)] $\F^{cr}=\{R_1R_2\colon R_i\in\F_i^{cr}\mbox{ for }i=1,2\}$.
 \item [(b)] Let $P_i\in\F_i^s$ for $i=1,2$. Then $P_1P_2\in\F^s$.
 \item [(c)] For $i=1,2$ let $\Delta_i$ be a set of subgroups of $S_i$ such that $\Delta_i$ is closed under taking $\F_i$-conjugates. Set $\Gamma:=\{P_1P_2\colon P_i\in\Delta_i\mbox{ for each }i=1,2\}$. Let $\Delta$ be the set of subgroups of $S$ containing an element of $\Gamma$. Then $\Gamma$ is closed under taking $\F$-conjugates, and $\Delta$ is closed under taking $\F$-conjugates and overgroups in $S$.
\end{itemize}
\end{lemma}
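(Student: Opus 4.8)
The plan is to reduce (a) and (b) to the corresponding facts about direct products by transferring twice: across the central quotient analysed in Lemma~\ref{CentralQuotient}, and across the isomorphism that identifies $\F$ with an external central product. Write $\G:=\F_1\times\F_2$ on $S_1\times S_2$, let $\alpha\colon S_1\times S_2\to S$ be the map $(x_1,x_2)\mapsto x_1x_2$, and set $Z:=\ker(\alpha)$. As recorded just before the statement, $Z\leq Z(\F_1)\times Z(\F_2)=Z(\G)$ with $Z\cap\hat{S}_i=1$, the map $\alpha$ factors as the natural quotient $\theta\colon S_1\times S_2\to(S_1\times S_2)/Z$ followed by a group isomorphism $\ov\alpha\colon(S_1\times S_2)/Z\to S$, and $\ov\alpha$ is an isomorphism of fusion systems from $\G/Z=\F_1\times_Z\F_2$ onto $\F$. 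Since $\F_1$ and $\F_2$ are saturated, so are $\G$ and its central quotient $\G/Z$, so Lemma~\ref{CentralQuotient} applies to the epimorphism $\theta$, whose kernel $Z$ lies in $Z(\G)$. Finally, $(R_1\times R_2)\alpha=R_1R_2$ for all $R_i\leq S_i$, because $[S_1,S_2]=1$ in $S$.

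For (a): an isomorphism of saturated fusion systems induces a bijection on the sets of centric radical subgroups (as recalled in the proof of Lemma~\ref{CentralQuotient}), so $\F^{cr}=\{\ov R\,\ov\alpha:\ov R\in(\G/Z)^{cr}\}$. By Lemma~\ref{CentralQuotient}(a) we have $(\G/Z)^{cr}=\{Q\theta:Q\in\G^{cr}\}$, and by Lemma~\ref{DirectProductFusionSystems}(c) we have $\G^{cr}=\{R_1\times R_2:R_i\in\F_i^{cr}\}$. Concatenating these, and using $\theta\ov\alpha=\alpha$ together with $(R_1\times R_2)\alpha=R_1R_2$, gives $\F^{cr}=\{R_1R_2:R_i\in\F_i^{cr}\}$. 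For (b): if $P_i\in\F_i^s$ for $i=1,2$, then $P_1\times P_2\in\G^s$ by Lemma~\ref{DirectProductFusionSystems}(e), hence $(P_1\times P_2)\theta\in(\G/Z)^s$ by Lemma~\ref{CentralQuotient}(b); and since $\ov\alpha$ also induces a bijection on subcentric subgroups, $P_1P_2=(P_1\times P_2)\alpha=\big((P_1\times P_2)\theta\big)\ov\alpha$ lies in $\F^s$.

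For (c) I would argue directly rather than through $\theta$, since the set $\{R_1\times R_2:R_i\in\Delta_i\}$ is not in general stable under the operation $U\mapsto UZ$ and hence does not descend cleanly to $\G/Z$. Fix $P_1P_2\in\Gamma$ with $P_i\in\Delta_i$, and let $\phi\in\Hom_\F(P_1P_2,S)$. Since $\alpha$ induces an epimorphism $\G\to\F$ and $Z=\ker(\alpha)\leq(P_1\times P_2)Z$, the induced map $\Hom_\G\big((P_1\times P_2)Z,\,S_1\times S_2\big)\to\Hom_\F(P_1P_2,S)$ is surjective, so $\phi$ lifts to some $\psi$ with $(x\alpha)\phi=(x\psi)\alpha$ for all $x\in(P_1\times P_2)Z$. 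By the description of morphisms in $\G=\F_1\times\F_2$, the restriction $\psi|_{P_1\times P_2}$ has the form $(\psi_1\times\psi_2)|_{P_1\times P_2}$ with $\psi_i\in\Hom_{\F_i}(P_i,S_i)$, so $(P_1P_2)\phi=\big((P_1\times P_2)\psi\big)\alpha=(P_1\psi_1)(P_2\psi_2)$, and this lies in $\Gamma$ because $\Delta_i$ is closed under $\F_i$-conjugacy and $P_i\psi_i\in P_i^{\F_i}$. Thus $\Gamma$ is closed under $\F$-conjugacy. That $\Delta$ is closed under overgroups in $S$ is immediate from its definition; and if $T\in\Delta$ contains some $P_1P_2\in\Gamma$ and $\phi\in\Hom_\F(T,S)$, then $T\phi$ contains $(P_1P_2)(\phi|_{P_1P_2})\in\Gamma$, whence $T\phi\in\Delta$.

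The step I expect to be the main obstacle is this asymmetry in part (c): because $Z$ need not lie inside a product subgroup, one cannot push $\Gamma$ through $\theta$ and $\ov\alpha$ as in (a) and (b), so one is forced to lift an arbitrary $\F$-morphism out of $P_1P_2$ through $\alpha$; the only domain containing $\ker(\alpha)$ to hand is $(P_1\times P_2)Z$, which is not of product form, and the product structure has to be recovered by restricting the lift back to $P_1\times P_2$. The remaining work is organisational — keeping straight whether a given subgroup or morphism lives in $\G$, in $\G/Z$, or in $\F$ — together with the observation that parts (a) and (b) genuinely rely on $\F_1$ and $\F_2$ being saturated, so that Lemma~\ref{CentralQuotient} and the isomorphism-invariance of the classes of centric radical and of subcentric subgroups are available.
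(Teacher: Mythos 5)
Your proposal is correct and takes essentially the same route as the paper: parts (a) and (b) are exactly the paper's argument, combining Lemma~\ref{CentralQuotient} with Lemma~\ref{DirectProductFusionSystems}(c),(e) (the paper applies Lemma~\ref{CentralQuotient} directly to the epimorphism induced by $\alpha$ rather than factoring through $\theta$ and $\ov{\alpha}$, which is the same computation). For (c) the paper merely cites Remark~\ref{EpiConjugates} together with Lemma~\ref{DirectProductFusionSystems}(f); your lifting of $\phi\in\Hom_\F(P_1P_2,S)$ to a morphism on $(P_1\times P_2)Z$ and restricting back to $P_1\times P_2$ is precisely the explicit form of that citation, so again the same argument, just with the details the paper leaves implicit written out.
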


\begin{proof}
Property (a) follows from Lemma~\ref{CentralQuotient}(a) and Lemma~\ref{DirectProductFusionSystems}(c). Similarly, property (b) follows Lemma~\ref{CentralQuotient}(b) and Lemma~\ref{DirectProductFusionSystems}(e). Remark~\ref{EpiConjugates} and Lemma~\ref{DirectProductFusionSystems}(f) imply that $\Gamma$ is closed under taking $\F$-conjugates. Hence, $\Delta$ is closed under taking $\F$-conjugates as well. Clearly $\Delta$ is closed under taking overgroups in $S$.
\end{proof}

\section{Partial groups and localities}

\subsection{Partial groups}
Adapting the notation from \cite{Chermak:2013} and \cite{Chermak:2015}, we write $\W(\L)$ for the set of words in a set $\L$, $\emptyset$ for the empty word, and $v_1\circ v_2\circ\dots\circ v_n$ for the concatenation of words $v_1,\dots,v_n\in\W(\L)$. Moreover, we identify each element $f\in\L$ with the word $(f)\in\W(\L)$ of length one. Via this identification, we have in particular $\L\subseteq\W(\L)$. Roughly speaking, a partial group is a set $\L$ together with a product which is only defined on certain words in $\L$, and an inversion map $\L\rightarrow \L$ which is an involutory bijection, subject to certain axioms. We refer the reader to \cite[Definition~2.1]{Chermak:2013} or \cite[Definition~1.1]{Chermak:2015} for the precise definition of a partial group, and to the elementary properties of partial groups stated in \cite[Lemma~2.2]{Chermak:2013} or \cite[Lemma~1.4]{Chermak:2015}.

\bigskip

\textbf{For the remainder of this section let $\L$ be a partial group with product $\Pi\colon \D\rightarrow \L$ defined on the domain $\D\subseteq\W(\L)$.} 

\bigskip

It follows from the axioms of a partial group that $\emptyset\in\D$. We set $\One=\Pi(\emptyset)$. By \cite[Lemma~1.4(f)]{Chermak:2015}, we have $\One^{-1}=\One$. Given a word $v=(f_1,\dots,f_n)\in\D$, we write sometimes $f_1f_2\dots f_n$ for the product $\Pi(v)$.

\smallskip

If $\m{X}$ and $\m{Y}$ are subsets of $\L$, we set
\[\m{X}\m{Y}:=\{\Pi(x,y)\colon x\in X,\;y\in Y,\;(x,y)\in\D\}.\]

\smallskip

A partial subgroup of $\L$ is a subset $\H$ of $\L$ such that $f^{-1}\in\H$ for all $f\in\H$ and $\Pi(w)\in\H$ for all $w\in\W(\H)\cap\D$. Note that $\emptyset\in\W(\H)\cap\D$ and thus $\One=\Pi(\emptyset)\in\H$ if $\H$ is a partial subgroup of $\L$. It is easy to see that a partial subgroup of $\L$ is always a partial group itself whose product is the restriction of the product $\Pi$ to $\W(\H)\cap\D$. Observe furthermore that $\L$ forms a group in the usual sense if $\W(\L)=\D$; see \cite[Lemma~1.3]{Chermak:2015}. So it makes sense to call a partial subgroup $\H$ of $\L$ a \textit{subgroup of $\L$} if $\W(\H)\subseteq\D$. In particular, we can talk about \textit{$p$-subgroups of $\L$} meaning subgroups of $\L$ whose order is a power of $p$.

\begin{remark}\label{Ones}
Let $u,v\in\W(\L)$ such that $u\circ v\in\D$. Then $u\circ (\One)\circ v\in\D$ and $\Pi(u\circ (\One)\circ v)=\Pi(u\circ v)$.

\smallskip

As a consequence, if $w$ is a word whose entries are all $\One$, then $w\in\D$ and $\Pi(w)=\One$. So $\{\One\}$ is a subgroup of $\L$.
\end{remark}

\begin{proof}
The first part is shown in \cite[Lemma~1.4(c)]{Chermak:2015}. Using this property repeatedly starting with $u=v=\emptyset$, it follows that a word $w$ all of whose entries are $\One$ lies in $\D$ and that $\Pi(w)=\Pi(\emptyset)=\One$. As $\One^{-1}=\One$, it follows that $\{\One\}$ is a subgroup of $\L$.
\end{proof}

\subsection{Conjugation in partial groups}
For any $g\in\L$, $\D(g)$ denotes the set of $x\in\L$ with $(g^{-1},x,g)\in\D$. Thus, $\D(g)$ denotes the set of elements $x\in\L$ for which the conjugation $x^g:=\Pi(g^{-1},x,g)$ is defined. By the axioms of a partial group, $(g^{-1},g)\in\D$ and $\Pi(g^{-1},g)=\One$ for any $g\in\L$. So by Remark~\ref{Ones}, $(g^{-1},\One,g)\in\D$ and $\Pi(g^{-1},\One,g)=\One$. Hence, for any $g\in\L$, $\One\in\D(g)$ and $\One^g=\One$. As $\One^{-1}=\One$, it follows similarly by Remark~\ref{Ones} that $g\in\D(\One)$ and $g^{\One}=g$ for any $g\in\L$.

\smallskip

If $g\in\L$ and $X\subseteq \D(g)$ we set $X^g:=\{x^g\colon x\in X\}$. If we write $X^g$ for some $g\in\L$ and some subset $X\subseteq \L$, we will always implicitly mean that $X\subseteq\D(g)$. Similarly, if we write $x^g$ for $x,g\in\L$, we always mean that $x\in\D(g)$. 

\smallskip

If $X$ is a subsets of $\L$ then we set
\[N_\L(X):=\{g\in\L\colon X^g=X\}\mbox{ and }C_\L(X):=\{g\in\L\colon x^g=x\mbox{ for all }x\in X\}.\]
Note that $C_\L(X)\subseteq N_\L(X)$. Similarly, for $x\in \L$, we define $C_\L(x):=\{f\in\L\colon x^f=x\}$. As argued above, $\One$ is contained in the centralizer of any element or subset of $\L$.

\smallskip

If $X$ and $Y$ are subsets of $\L$ then set $N_Y(X)=N_\L(X)\cap Y$ and $C_Y(X)=C_\L(X)\cap Y$. Moreover, set
\[Z(\L):=C_\L(\L).\]

\begin{lemma}\label{Centralizers}
Let $f,g\in\L$. Then the following conditions are equivalent:
\begin{itemize}
 \item [(1)] $f\in C_\L(g)$.
 \item [(2)] $g\in C_\L(f)$.
 \item [(3)] $(f^{-1},g^{-1},f,g)\in\D$ and $f^{-1}g^{-1}fg=\One$.
 \item [(4)] $(g^{-1},f^{-1},g,f)\in\D$ and $g^{-1}f^{-1}gf=\One$.
\end{itemize}
Moreover, if $f\in C_\L(g)$ then $(f,g),(g,f)\in\D$ and $fg=gf$. 
\end{lemma}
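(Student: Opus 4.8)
The plan is to prove the cyclic chain of equivalences $(1)\Rightarrow(3)\Rightarrow(2)\Rightarrow(4)\Rightarrow(1)$, using only the partial-group axioms and the elementary properties from \cite[Lemma~1.4]{Chermak:2015}, together with Remark~\ref{Ones}. By symmetry in $f$ and $g$ it actually suffices to prove $(1)\Rightarrow(3)$ and $(3)\Rightarrow(2)$; then swapping the roles of $f$ and $g$ gives $(2)\Rightarrow(4)$ and $(4)\Rightarrow(1)$, so all four are equivalent. Throughout I will use that the product of a word is unchanged if one replaces a subword by its product (the associativity axiom $\D\circ\D\subseteq\D$ and the formula $\Pi(u\circ v)=\Pi(u\circ(\Pi(w))\circ v)$ when $u\circ w\circ v\in\D$), and that $(h^{-1},h)\in\D$ with $\Pi(h^{-1},h)=\One$ for all $h\in\L$.

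For $(1)\Rightarrow(3)$: assume $g\in\D(f)$ and $g^f=\Pi(f^{-1},g,f)=g$. Consider the word $(f^{-1},g,f,g^{-1})$. I claim it lies in $\D$: the subword $(f^{-1},g,f)$ is in $\D$ by hypothesis and has product $g$, so by inserting and the fact that $(g,g^{-1})\in\D$, the full word $(f^{-1},g,f,g^{-1})$ is in $\D$ with product $\Pi(g,g^{-1})=\One$. Now left-multiply by $f$: since $(f,f^{-1})\in\D$ with product $\One$ and one can check via the axioms that $(f)\circ(f^{-1},g,f,g^{-1})\in\D$, we get that $(f,f^{-1},g,f,g^{-1})\in\D$ and its product is $\One\cdot$ (something) — more precisely, collapsing $(f,f^{-1})$ to $\One$ and then deleting it via Remark~\ref{Ones}, the product of $(f,f^{-1},g,f,g^{-1})$ equals the product of $(g,f,g^{-1})$. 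Comparing, $\Pi(g,f,g^{-1})=\One$, i.e. $f^g=\One$ — that is not quite what I want; instead I should be more careful and run the bookkeeping to land on $(g^{-1},f^{-1},g,f)\in\D$ with product $\One$, which is condition $(4)$. So in fact the cleanest route is $(1)\Rightarrow(4)$: from $\Pi(f^{-1},g,f)=g$ deduce $\Pi(g^{-1})\circ(f^{-1},g,f)\circ(\ ) \in\D$ and manipulate to $(g^{-1},f^{-1},g,f)\in\D$ with product $\Pi(g^{-1},g)=\One$. Then $(4)\Rightarrow(2)$ and $(2)\Rightarrow(3)\Rightarrow(1)$ by the symmetric argument. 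I will pick whichever orientation of the chain makes the ins.\/deletion of $\One$'s via Remark~\ref{Ones} most transparent and present that.

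For the final sentence: assume $f\in C_\L(g)$, so by the equivalence $(f^{-1},g^{-1},f,g)\in\D$ with $f^{-1}g^{-1}fg=\One$. I want to conclude $(f,g)\in\D$, $(g,f)\in\D$, and $fg=gf$. From $(f^{-1},g^{-1},f,g)\in\D$, left-multiplying by $(f,g)$ and using $\D\circ\D\subseteq\D$ together with $(g)\circ(g^{-1})$ and $(f)\circ(f^{-1})$ collapsing to $\One$, one extracts that $(f,g)\in\D$; symmetrically from condition $(4)$ one gets $(g,f)\in\D$. For the equality: the relation $\Pi(f^{-1},g^{-1},f,g)=\One$ can be rewritten, using that $\Pi$ is unchanged under subword-collapsing and that inversion reverses words (\cite[Lemma~1.4]{Chermak:2015}), as $\Pi(g^{-1},f^{-1})\cdot\Pi(f,g)=\One$ suitably interpreted, i.e. $\Pi(f,g)=\Pi((g^{-1},f^{-1})^{-1})=\Pi(f,g)$ — more directly, $\Pi(f^{-1},g^{-1},f,g)=\One$ gives $\Pi(f,g)=\Pi(g,f)$ after cancelling, since $(g,f)$ is the unique element $h$ with $\Pi(f^{-1},g^{-1},h)=\One$-type bookkeeping; I will spell out that $fg$ and $gf$ are both equal to the inverse of $\Pi(g^{-1},f^{-1})$.

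The main obstacle is purely the membership bookkeeping: the partial-group axioms only guarantee that products of subwords exist when certain concatenations lie in $\D$, so every step where I "left-multiply by $f$" or "collapse $(g,g^{-1})$ to $\One$" must be justified by an explicit appeal to $\D\circ\D\subseteq\D$ (the associativity axiom in Chermak's formulation) and to Remark~\ref{Ones}/\cite[Lemma~1.4(c)]{Chermak:2015} for inserting and deleting $\One$'s. None of it is deep, but one must be disciplined about which words are asserted to be in $\D$ at each stage; I expect the argument to be a careful but routine induction-free computation once the orientation of the equivalence chain is fixed.
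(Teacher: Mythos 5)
Your overall skeleton (reduce by the $f\leftrightarrow g$ symmetry, then chase one or two implications by word bookkeeping) matches the paper's, but as written the proposal has two genuine problems. First, the tool you repeatedly invoke, ``$\D\circ\D\subseteq\D$ (the associativity axiom in Chermak's formulation)'', is not an axiom of partial groups and is false in general: the axiom goes the other way, $u\circ v\in\D\Rightarrow u,v\in\D$ (if concatenations of words in $\D$ were again in $\D$, then $\W(\L)=\D$ and $\L$ would be a group). Every step where you ``left-multiply by $f$'' or ``left-multiply by $(f,g)$'' is exactly the kind of move this false axiom would be needed for; with the real axioms such steps must instead be produced by the insertion rule \cite[Lemma~1.4(d)]{Chermak:2015} (inserting $(h,h^{-1})$ into a word already known to lie in $\D$), by the axiom $w\in\D\Rightarrow w^{-1}\circ w\in\D$ with $\Pi(w^{-1}\circ w)=\One$, by collapsing a subword to its product, or by passing to subwords. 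Second, the crucial implication is never actually carried out: your concrete attempt at $(1)\Rightarrow(3)$ lands on $\Pi(g,f,g^{-1})=\One$, you acknowledge this is not the target, and you then defer (``I will pick whichever orientation \dots''). The missing idea is precisely the trick the paper uses: starting from $w=(f^{-1},g,f)\in\D$ with $\Pi(w)=g$, apply the fourth axiom to get $w^{-1}\circ w=(f^{-1},g^{-1},f,f^{-1},g,f)\in\D$ with product $\One$, note $\Pi(w^{-1})=\Pi(w)^{-1}=g^{-1}$ by \cite[Lemma~1.4]{Chermak:2015}, and then collapse the subword $w^{-1}$ to the single letter $g^{-1}$ via the third axiom, obtaining $(g^{-1},f^{-1},g,f)\in\D$ with product $\One$; together with word inversion (which gives $(3)\Leftrightarrow(4)$ in one line) and the symmetry reduction, this finishes the equivalences. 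Without this step your chain does not close.

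On the final clause, you and the paper genuinely diverge: the paper simply quotes \cite[Lemma~1.5(b)]{Chermak:2015}, while you propose a direct derivation. A direct derivation is possible and short, but not by your route: $(f,g)\in\D$ follows immediately from $(3)$ by the subword axiom applied to $(f^{-1},g^{-1})\circ(f,g)$ (no left-multiplication is needed or available), $(g,f)\in\D$ follows similarly from $(4)$ or from inverting $(f^{-1},g^{-1})$, and $fg=gf$ follows by $\D$-multiplicativity applied to the splitting $(f^{-1},g^{-1})\circ(f,g)$ of the word in $(3)$, which gives $\Pi(f,g)=\Pi(f^{-1},g^{-1})^{-1}=\Pi(g,f)$ using word inversion. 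If you replace your ``$\D\circ\D\subseteq\D$'' manipulations by these justifications and supply the $w^{-1}\circ w$ argument above, the proof becomes correct; as it stands, it is an outline with the key computation missing and the main justification device invalid.
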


\begin{proof}
We only need to prove that properties (1)-(4) are equivalent, since the assertion follows then from \cite[Lemma~1.5(b)]{Chermak:2015}.  Since the situation is symmetric in $f$ and $g$, it is sufficient to prove that (1) and (3) are equivalent, and that (3) implies (4). 

\smallskip

Assume first that (3) holds, i.e. that $u:=(f^{-1},g^{-1},f,g)\in\D$ and $\Pi(u)=\One$. Then by \cite[Lemma~1.4(f)]{Chermak:2015}, $(g^{-1},f^{-1},g,f)=u^{-1}\in\D$ and $\Pi(u^{-1})=\Pi(u)^{-1}=\One^{-1}=\One$. So (4) holds. By the axioms of a partial group, $(g^{-1},f,g)\in\D$ as $u\in\D$. By \cite[Lemma~1.4(d)]{Chermak:2015}, it follows moreover that $(f)\circ u\in\D$ and $\Pi(f\circ (u))=\Pi(g^{-1},f,g)=f^g$. Hence, $f^g=\Pi((f)\circ u)=\Pi(f,\Pi(u))=\Pi(f,\One)=f$ by the third axiom of a partial group and by Remark~\ref{Ones}. So (1) holds. This shows that (3) implies (1) and (4).

\smallskip

Assume now that (1) holds, i.e. $v=(g^{-1},f,g)\in\D$ and $f^g=\Pi(v)=f$. By the fourth axiom of a partial group,  $v^{-1}\circ v=(g^{-1},f^{-1},g,g^{-1},f,g)\in\D$ and $\Pi(v^{-1}\circ v)=\One$. Moreover, by \cite[Lemma~1.6(b)]{Chermak:2015}, $v^{-1}\in\D$ and $\Pi(v^{-1})=\Pi(v)^{-1}=f^{-1}$. Hence, by the axioms of a partial group, $(f^{-1},g^{-1},f,g)=(\Pi(v^{-1}))\circ v\in\D$ and $\Pi(f^{-1},g^{-1},f,g)=\Pi(v^{-1}\circ v)=\One$. So (1) implies (3). This completes the proof. 
\end{proof}

Since there is a natural notion of conjugation, there is also a natural notion of partial normal subgroups of partial groups. Namely, a partial subgroup $\N$ of $\L$ is called a \textit{partial normal subgroup} of $\L$ if $n^f\in\N$ for all $f\in\L$ and all $n\in\N\cap\D(f)$.

\subsection{Homomorphisms of partial groups}
In this subsection let $\L'$ be a partial group with domain $\D'$ and product $\Pi'\colon\D'\rightarrow\L'$. Let $\One'=\Pi'(\emptyset)$ be the identity in $\L'$.

\smallskip

If $\phi\colon M\rightarrow N$ is a map between two sets $M$ and $N$, then $\phi^*\colon\W(M)\rightarrow\W(N)$ denotes always the map induced by $\phi$, i.e. $(f_1,\dots,f_n)\phi^*=(f_1\phi,\dots,f_n\phi)$ for every word $(f_1,\dots,f_n)\in\W(M)$. 

\smallskip

Let $\beta\colon\L\rightarrow\L'$. Recall from \cite[Definition~1.11]{Chermak:2015} that $\beta$ is called a homomorphism of partial groups if $\D\beta^*\subseteq\D'$ and $\Pi'(v\beta^*)=(\Pi(v))\beta$ for all $v\in\D$. 

\smallskip

If $\beta$ is a homomorphism of partial groups, define the kernel of $\beta$ via
\[\ker(\beta)=\{f\in\L\colon f\beta=\One'\}.\]
By \cite[Lemma~1.14]{Chermak:2015}, the kernel of a homomorphism of partial groups forms always a partial normal subgroup.

\begin{definition}
 Let $\beta\colon \L\rightarrow \L'$ be a homomorphism of partial groups. We call $\beta$ a \textit{projection} of partial groups if $\D\beta^*=\D'$. A projection $\beta$ is called an \textit{isomorphism} of partial groups if $\beta$ is injective. 
\end{definition}

Note that the condition $\D\beta^*=\D'$ implies that $\beta$ is surjective, as every word of length one is an element of $\D'$. So every projection of partial groups is surjective as a map, and every isomorphism of partial groups is a bijection.

\begin{remark}\label{PartialSubgroupProjection}
 Let $\beta\colon\L\rightarrow \L'$ be a homomorphism of partial groups and let $\H$ be a partial subgroup of $\L$. Then $(\D\cap \W(\H))\beta^*\subseteq \D'\cap\W(\H\beta)$. Moreover, if $(\D\cap \W(\H))\beta^*=\D'\cap\W(\H\beta)$ then $\H\beta$ is a partial subgroup of $\L'$ and $\beta|_\H\colon \H\rightarrow\H\beta$ is a projection of partial groups.
\end{remark}

\begin{proof}
 Clearly, $(\D\cap \W(\H))\beta^*\subseteq \D'\cap\W(\H\beta)$. Assume now $(\D\cap \W(\H))\beta^*=\D'\cap\W(\H\beta)$. 
If $f\in\H\beta$ then $f=g\beta$ for some $g\in\H$. As $\H$ is a partial subgroup, $g^{-1}\in\H$. Thus, by \cite[Lemma~1.13]{Chermak:2015}, $f^{-1}=(g\beta)^{-1}=(g^{-1})\beta\in\H\beta$. Let now $v\in\D'\cap\W(\H\beta)=(\D\cap\W(\H))\beta^*$. Then there exists $u\in\D\cap\W(\H)$ such that $v=u\beta^*$. Then $\Pi'(v)=\Pi'(u\beta^*)=(\Pi(u))\beta$. As $\H$ is a partial subgroup of $\L$, we have $\Pi(u)\in\H$ and thus $\Pi(v)\in\H\beta$. Hence, $\H\beta$ is a partial subgroup of $\L'$. Clearly, $\beta|_\H$ is a projection of partial groups.
\end{proof}

\begin{remark}\label{IsomorphismOfPartialGroups}
Let $\beta\colon\L\rightarrow \L'$ be an isomorphism of partial groups. Then the following hold:
\begin{itemize}
 \item [(a)] The map $\beta^{-1}\colon\L'\rightarrow\L$ is an isomorphism of partial groups.
 \item [(b)] Let $\H\subseteq \L$. Then $\H$ is a partial subgroup of $\L$ if and only if $\H\beta$ is a partial subgroup of $\L'$.
\end{itemize}
\end{remark}

\begin{proof}
As $\beta$ is a bijection, $\beta^*$ is a bijection and $(\beta^{-1})^*=(\beta^*)^{-1}$. So $\D\beta^*=\D'$ implies $\D=\D'(\beta^{-1})^*$. In particular, for $v\in\D'$, we have $u:=v(\beta^{-1})^*\in\D$ and $u\beta^*=v$. So $\Pi'(v)=\Pi'(u\beta^*)=(\Pi(u))\beta$ implies $(\Pi'(v))\beta^{-1}=\Pi(u)=\Pi(v(\beta^{-1})^*)$. So (a) holds. 

\smallskip

For the proof of (b) let $\H$ be a partial subgroup of $\L$. As $\beta$ and $\beta^*$ are bijections, we have  $(\D\cap\W(\H))\beta^*=(\D\beta^*)\cap(\W(\H)\beta^*)=\D'\cap\W(\H\beta)$. So $\H\beta$ is a partial subgroup of $\L'$ by Remark~\ref{PartialSubgroupProjection}. Now (b) follows from (a).
\end{proof}

We call two partial groups isomorphic if there exists an isomorphism between them.

\subsection{Localities}

\begin{definition}
Let $\Delta$ be a set of subgroups of $\L$. We write $\D_\Delta$ for the set of words $(f_1,\dots,f_n)\in\W(\L)$ such that there exist $P_0,\dots,P_n\in\Delta$ with 
\begin{itemize}
\item [(*)] $P_{i-1}\subseteq \D(f_i)$ and $P_{i-1}^{f_i}=P_i$.
\end{itemize}
If $v=(f_1,\dots,f_n)\in\W(\L)$, then we say that $v\in\D_\Delta$ via $P_0,\dots,P_n$ (or $v\in\D$ via $P_0$), if $P_0,\dots,P_n\in\Delta$ and (*) holds.
\end{definition}

\begin{definition}\label{LocalityDefinition}
We say that $(\L,\Delta,S)$ is a \textit{locality} if the partial group $\L$ is finite as a set, $S$ is a $p$-subgroup of $\L$, $\Delta$ is a non-empty set of subgroups of $S$, and the following conditions hold:
\begin{itemize}
\item[(L1)] $S$ is maximal with respect to inclusion among the $p$-subgroups of $\L$.
\item[(L2)] $\D=\D_\Delta$.
\item[(L3)] For any subgroup $Q$ of $S$, for which there exist $P\in\Delta$ and $g\in\L$ with $P\subseteq \D(g)$ and $P^g\leq Q$, we have $Q\in\Delta$.
\end{itemize}
If $(\L,\Delta,S)$ is a locality and $v=(f_1,\dots,f_n)\in\W(\L)$, then we say that $v\in\D$ via $P_0,\dots,P_n$ (or $v\in\D$ via $P_0$), if $v\in\D_\Delta$ via $P_0,\dots,P_n$.
\end{definition}

If $\L$ is any partial group, $S$ a subset of $\L$, and $g\in\L$ we set 
\[S_g:=\{s\in S\cap\D(g)\colon s^g\in S\}.\]

\begin{lemma}[Important properties of localities]\label{LocalitiesProp}
Let $(\L,\Delta,S)$ be a locality. Then the following hold:
\begin{itemize}
\item [(a)] $N_\L(P)$ is a subgroup of $\L$ for each $P\in\Delta$.
\item [(b)] Let $P\in\Delta$ and $g\in\L$ with $P\subseteq S_g$. Then $P^g\in\Delta$. So in particular $Q$ is a subgroup of $S$. 
\end{itemize}
\end{lemma}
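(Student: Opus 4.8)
The plan is to derive both parts from two standard structural facts about localities, each a consequence of the partial group axioms together with condition (L2) (see \cite{Chermak:2013}, or the recollections in \cite{Chermak:2015}). \emph{Fact~1:} for $g\in\L$ and any subgroup $R$ of $S$ with $R\subseteq S_g$, the conjugation map $c_g\colon x\mapsto x^g$ restricts to an injective group homomorphism $R\to S$, the image $R^g$ is a subgroup of $S$ contained in $S_{g^{-1}}$, and $c_{g^{-1}}$ restricts on $R^g$ to the inverse of $c_g|_R$. \emph{Fact~2:} if $w=(g_1,\dots,g_n)\in\D$ via $(X_0,\dots,X_n)$, then $X_0\subseteq S_{\Pi(w)}$ and $X_0^{\Pi(w)}=X_n$, conjugation by $\Pi(w)$ being the composite of the $c_{g_i}$. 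I would open by recalling these with precise references.

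For part (b): since $P\subseteq S_g$, Fact~1 gives that $c_g$ restricts to an injective group homomorphism $P\to S$, whence $P^g$ is a subgroup of $S$ (this is the last assertion of (b)). Now apply (L3) to the subgroup $P\in\Delta$, the element $g$, and the subgroup $Q:=P^g$ of $S$: as $P\subseteq\D(g)$ and $P^g\le Q$, condition (L3) yields $P^g=Q\in\Delta$.

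For part (a): fix $P\in\Delta$ and put $N:=N_\L(P)=\{g\in\L:P\subseteq\D(g),\ P^g=P\}$. I would check the three conditions for $N$ to be a subgroup of $\L$. \emph{Closure under inversion:} for $g\in N$ we have $P^g=P\le S$, so $P\subseteq S_g$, and then Fact~1 (with $R=P$) gives $P=P^g\subseteq S_{g^{-1}}\subseteq\D(g^{-1})$ and $P^{g^{-1}}=c_{g^{-1}}(P)=(c_g|_P)^{-1}(P)=P$, so $g^{-1}\in N$. \emph{$\W(N)\subseteq\D$:} any word $w=(g_1,\dots,g_n)$ with all $g_i\in N$ satisfies $P\subseteq\D(g_i)$ and $P^{g_i}=P\in\Delta$ for each $i$, so $w\in\D_\Delta=\D$ via the constant tuple $(P,\dots,P)$. \emph{Products lie in $N$:} for such $w$, Fact~2 applied to the constant tuple gives $P^{\Pi(w)}=P$, i.e.\ $\Pi(w)\in N$. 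By these three points and the definition of a subgroup of $\L$, $N$ is a subgroup of $\L$.

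The only non-formal ingredients are Facts~1 and~2; everything else is a direct unwinding of the definitions of $\D_\Delta$, of $S_g$, and of ``subgroup of $\L$''. So the single genuine obstacle — if one wants a self-contained proof rather than citing \cite{Chermak:2013} — is establishing those two facts from the partial group axioms and (L2), the delicate points being the passage between $\D(g)$ and $\D(g^{-1})$ and the fact that a $\Delta$-path witnessing $w\in\D$ transports correctly under the product map $\Pi$; both are handled by induction on the length of $w$.
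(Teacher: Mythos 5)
Your argument is correct, but it is a genuinely different route from the paper's, because the paper does not argue at all: its proof of this lemma is a one-line citation, (a) being \cite[Lemma~2.3(a)]{Chermak:2015} and (b) being \cite[Proposition~2.6(c)]{Chermak:2015}. What you do is reconstruct those proofs from lower-level conjugation facts: your Fact~1 and Fact~2 are essentially the content of Chermak's Lemma~2.3/Proposition~2.6 package (that $c_g$ is an injective homomorphism on subgroups of $S$ contained in $S_g$, with inverse $c_{g^{-1}}$, and that a word $w\in\D$ via $(X_0,\dots,X_n)$ conjugates $X_0$ to $X_n$ under $\Pi(w)$), and your derivations of (a) via the constant tuple $(P,\dots,P)$ together with (L2), and of (b) via Fact~1 plus (L3), follow Chermak's own arguments quite closely. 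So what your approach buys is a proof that is self-contained modulo more elementary inputs, at the cost of partly re-deriving the very results one could cite outright; what the paper's approach buys is brevity, since these facts are foundational and proved once in \cite{Chermak:2015}. Two small points of precision: your Fact~1 is only needed for $R=P\in\Delta$, and its proof (in particular the homomorphism property of $c_g$ and the membership $P^g\in\Delta$ or $S_g\in\Delta$ lurking behind it) uses the objectivity condition (L3), not just the partial group axioms and (L2) as your parenthetical suggests -- Fact~2, by contrast, really does need only the axioms and (L2) since the objects $X_i$ are given in $\Delta$. Also note that the ``$Q$'' in the paper's statement of (b) is a typo for $P^g$, which you interpreted correctly.
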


\begin{proof}
Property (a) is \cite[Lemma~2.3(a)]{Chermak:2015} and property (b) is \cite[Proposition~2.6(c)]{Chermak:2015}.
\end{proof}

Let $(\L,\Delta,S)$ be a locality. Then by \cite[Lemma~2.3(b)]{Chermak:2015}, for every $P\in\Delta$ and every $g\in\L$ with $P\subseteq S_g$, the map $c_g\colon P\rightarrow P^g,x\mapsto x^g$ is an injective group homomorphism. The fusion system $\F_S(\L)$ is the fusion system over $S$ generated by such conjugation maps. Equivalently, $\F_S(\L)$ is generated by the conjugation between subgroups of $S$.

\begin{definition}
 We say that the locality $(\L,\Delta,S)$ is a locality \textit{over $\F$} if $\F=\F_S(\L)$.
\end{definition}

\subsection{Projections of localities}\label{SubsectionLocalitiesProjections}

\begin{definition}
Let $\L$ and $\L'$ be partial groups, and let $\beta\colon \L\rightarrow\L'$ be a homomorphism of partial groups. For every set $\Gamma$ of subgroups of $\L$ we set
\[\Gamma\beta:=\{P\beta\colon P\in\Gamma\}.\]
Suppose now $(\L,\Delta,S)$ and $(\L',\Delta',S')$ form localities. Then $\beta$ is called a \textit{projection of localities} from $(\L,\Delta,S)$ to $(\L',\Delta',S')$ if $\beta$ is a projection of partial groups and $\Delta'=\Delta\beta$ (and thus also $S\beta=S'$). If $\beta$ is in addition injective then we call $\beta$ an \textit{isomorphism of localities} from $(\L,\Delta,S)$ to $(\L',\Delta',S')$.
\end{definition}

If $(\L,\Delta,S)$ is a locality, $\L'$ is a partial group and $\beta\colon \L\rightarrow\L'$ is a projection of partial groups then $(\L',\Delta\beta,S\beta)$ forms a locality by  \cite[Theorem~4.4]{Chermak:2015}. In other words, the projection $\beta$ of partial groups ``transports'' the locality structure on $\L$ to a locality structure on $\L'$. Clearly, $\beta$ is a projection of localities from $(\L,\Delta,S)$ to $(\L',\Delta\beta,S\beta)$. 

\smallskip

If $\beta$ is a bijection then actually the partial group structure on $\L$ can be ``transported'' as well. The following remark is straightforward to prove:

\begin{remark}\label{StructureTransport}
Suppose $\L$ is a partial group as before, $\L'$ is a set and $\beta\colon\L\rightarrow\L'$ is a bijection. Notice that then $\beta^*$ is a bijection as well. We can turn $\L'$ into a partial group by setting $\D':=\{v\beta^*\colon v\in\D\}$, $\Pi'(v\beta^*):=(\Pi(v))\beta$ for every $v\in\D$ and $(f\beta)^{-1}=(f^{-1})\beta$ for every $f\in\L$. By construction, $\beta$ is then an isomorphism of partial groups from $\L$ to the newly constructed partial group $\L'$. 

\smallskip

If $(\L,\Delta,S)$ is a locality then $(\L',\Delta\beta,S\beta)$ is a locality. Moreover, $\beta$ is an isomorphism of localities from $(\L,\Delta,S)$ to $(\L',\Delta\beta,S\beta)$. 
\end{remark}

\smallskip

Chermak \cite{Chermak:2015} developed a theory of quotient localities modulo partial normal subgroups. We refer the reader to this article for details, but give a quick summary here: Suppose $(\L,\Delta,S)$ is a locality and $\N$ is a partial normal subgroup of $\L$. For $f\in\L$ set
\[\N f:=\{\Pi(n,f)\colon n\in\N,\;(n,f)\in\D\}\]
and call $\N f$ a right coset of $\N$ in $\L$. If $\N f$ is maximal with respect to inclusion among the right cosets of $\N$ in $\L$, then we call $\N f$ a \textit{maximal (right) coset}. The maximal right cosets form by \cite[Proposition~3.14(d)]{Chermak:2015} a partition of $\L$, i.e. every element of $\L$ lies in a unique maximal right coset. The map
\[\L\rightarrow \L/\N\]
mapping every element $g\in\L$ to the unique maximal right coset of $\N$ containing $g$ is a projection of partial groups; see \cite[Corollary~4.6]{Chermak:2015}. It is called the \textit{canonical projection} $\L\rightarrow\L/\N$.
 The kernel of the canonical projection equals $\N$. If $\beta$ is as above and $\N=\ker(\beta)$ then the map
\[\L/\N\rightarrow \L',\;\N f\mapsto f\beta\]
is by \cite[Theorem~4.7]{Chermak:2015} well-defined and an isomorphism of partial groups.

\smallskip

Chermak \cite[Definition~3.6]{Chermak:2015} defines $\uparrow$-maximal elements of $\L$ (relative to $\N$). We will not work directly with the definition of $\uparrow$-maximal elements here, but only use the following characterization: For any $f\in\L$, the right coset $\N f$ is a maximal coset if and only if $f$ is $\uparrow$-maximal relative to $\N$ (cf \cite[Proposition~3.14(c)]{Chermak:2015}).

\begin{lemma}\label{ModCentral1}
Let $(\L,\Delta,S)$ be a locality, $\L'$ a partial group, and suppose $\beta\colon \L\rightarrow\L'$ is a projection of partial groups. Assume $\N:=\ker(\beta)\subseteq Z(\L)$. Then every coset of $\N$ in $\L$ has $|\N|$ elements and is thus maximal. Moreover, for all $v\in\W(\L)$, we have $v\in\D$ if and only if $v\beta^*\in\D'$.
\end{lemma}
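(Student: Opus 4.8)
The plan is to deduce everything from the one structural consequence of the hypothesis $\N=\ker(\beta)\subseteq Z(\L)$: since $\N\subseteq C_\L(\L)$, conjugation by any $n\in\N$ is the identity map on $\L$, so $\D(n)=\L$ and $P^n=P$ for every subgroup $P\leq S$. Combining this with $\D=\D_\Delta$ (axiom (L2)), one gets an \emph{insertion principle}: a word lies in $\D$, with unchanged product, if an arbitrary element of $\N$ is inserted at (or deleted from) any position. In particular, by Lemma~\ref{Centralizers}, $(n,f)\in\D$ for all $n\in\N$ and $f\in\L$, so $\N f=\{nf\colon n\in\N\}$ and the map $\N\rightarrow\N f$, $n\mapsto nf$, is surjective; it is injective because $n_1f=n_2f$ may be left‑multiplied by $n_1^{-1}$ (inserting the central elements needed to keep all products defined and then using the associativity axiom), which forces $n_1=n_2$. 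Hence $|\N f|=|\N|$ for every $f\in\L$.

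Next I would show that every coset of $\N$ is maximal. Recall that $f\in\N g$ implies $\N f\subseteq\N g$ (immediate from associativity and the fact that $\N$ is a partial subgroup). Given $f\in\L$, pick a maximal right coset $\N g$ with $f\in\N g$ (one exists as $\L$ is finite and $f\in\N f$); then $\N f\subseteq\N g$ with $|\N f|=|\N g|=|\N|<\infty$, so $\N f=\N g$ is maximal. Consequently the canonical projection $\L\rightarrow\L/\N$ sends $f$ to $\N f$, every element of $\L$ is $\uparrow$‑maximal relative to $\N$, and, via the isomorphism $\L/\N\cong\L'$, $\N f\mapsto f\beta$, we have $f\beta=g\beta$ if and only if $g\in\N f$.

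For the final assertion, $v\in\D\Rightarrow v\beta^*\in\D'$ holds simply because $\beta$ is a homomorphism of partial groups. Conversely, let $v=(f_1,\dots,f_n)\in\W(\L)$ with $v\beta^*\in\D'$. As $\beta$ is a projection, $\D'=\D\beta^*$, so there is $u=(g_1,\dots,g_n)\in\D$ with $g_i\beta=f_i\beta$ for all $i$; by the previous paragraph $g_i=n_if_i$ for some $n_i\in\N$, and $\Pi(n_i^{-1},g_i)=f_i$. Inserting $n_i^{-1}$ immediately before $g_i$ in $u$ for each $i$ keeps the word in $\D$ by the insertion principle, and collapsing each block $(n_i^{-1},g_i)$ to its product $f_i$ via the associativity axiom yields $v=(f_1,\dots,f_n)\in\D$.

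The conceptual input is light; the work is entirely in making the word manipulations precise from the partial‑group axioms and the elementary lemmas of \cite{Chermak:2015} — the insertion/deletion principle for central elements, the right‑cancellation that gives $|\N f|=|\N|$, and the block‑collapsing in the last step. That bookkeeping is where I expect the (mild) friction to be; none of it is deep.
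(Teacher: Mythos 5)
Your proof is correct, and its first half (the bijection $\N\rightarrow\N f$, $n\mapsto\Pi(n,f)$, via Lemma~\ref{Centralizers} and cancellation, hence $|\N f|=|\N|$ and maximality of all cosets) coincides with the paper's argument. The ``moreover'' part, however, is handled by a genuinely different route: the paper simply observes that, since all cosets are maximal, every element of $\L$ is $\uparrow$-maximal relative to $\N$, and then quotes Chermak's result \cite[Theorem~4.4(b)]{Chermak:2015} that a word all of whose entries are $\uparrow$-maximal lies in $\D$ if and only if its image under $\beta^*$ lies in $\D'$. You instead argue by hand: lift $v\beta^*\in\D'=\D\beta^*$ to some $u\in\D$, identify the entrywise discrepancies as elements of $\N$ (using the fiber description of $\beta$ coming from the quotient isomorphism $\L/\N\cong\L'$), insert the correcting central elements into $u$ — legitimate precisely because $\D=\D_\Delta$ and conjugation by elements of $Z(\L)$ fixes every object, so the $\Delta$-chain witnessing $u\in\D$ survives — and then collapse each block $(n_i^{-1},g_i)$ to $f_i$ by the third partial-group axiom. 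Your version is more elementary and makes visible exactly where (L2) and centrality are used, at the cost of the word-manipulation bookkeeping; the paper's version is a one-line application of a stronger general theorem, which it must import anyway for the quotient machinery. One small slip: your ``insertion principle'' as stated claims the product is unchanged when a central element is inserted or deleted, which is false for $n\neq\One$ (the product picks up or loses a factor of $n$); fortunately you never use that clause — all your product computations correctly go through the associativity axiom — so it only needs rewording, not repair.
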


\begin{proof}
Let $f\in\L$. Since $\N\subseteq Z(\L)$, we have $(n,f)\in\D$ for all $n\in\N$ by Lemma~\ref{Centralizers}. So we have a well-defined map
\[\N\rightarrow \N f,\;n\mapsto \Pi(n,f)\]                                                            
and this map is clearly surjective. If $\Pi(n,f)=\Pi(n',f)$ with $n,n'\in\N$, then the right cancellation rule \cite[Lemma~1.4(e)]{Chermak:2015} yields $n=n'$. Hence, the above map is a bijection showing that every coset has precisely $|\N|$ elements. Hence, every right coset is maximal with respect to inclusion among the right cosets of $\N$. So every element of $\L$ is $\uparrow$-maximal. If $v=(f_1,\dots,f_n)\in\W(\L)$ such that every $f_i$ is $\uparrow$-maximal, then by \cite[Theorem~4.4(b)]{Chermak:2015}, $v\in\D$ if and only if $v\beta^*\in\D'$. This implies the assertion.                         
\end{proof}

\begin{lemma}\label{LocalitiesProjectionsModCentral}
 Let $(\L,\Delta,S)$ be a locality, let $\L'$ be a partial group, and let $\beta\colon \L\rightarrow\L'$ be a projection of partial groups with $\ker(\beta)\subseteq Z(\L)$. Suppose $\H$ is a partial subgroup of $\L$. Then  $\H\beta$ is a partial subgroup of $\L'$. Moreover, $(\D\cap\W(\H))\beta^*=\D'\cap \W(\H\beta)$, i.e. the restriction of $\beta$ to a map $\H\rightarrow \H\beta$ is a projection of partial groups. 
\end{lemma}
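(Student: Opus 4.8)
The plan is to deduce this lemma directly from Remark~\ref{PartialSubgroupProjection} together with Lemma~\ref{ModCentral1}. Recall that Remark~\ref{PartialSubgroupProjection} already gives the inclusion $(\D\cap\W(\H))\beta^*\subseteq \D'\cap\W(\H\beta)$ for free, and moreover tells us that \emph{if} equality holds then $\H\beta$ is a partial subgroup of $\L'$ and $\beta|_\H\colon\H\rightarrow\H\beta$ is a projection of partial groups. So the entire content of the lemma reduces to proving the reverse inclusion $\D'\cap\W(\H\beta)\subseteq (\D\cap\W(\H))\beta^*$.

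First I would take a word $v'\in\D'\cap\W(\H\beta)$, say $v'=(g_1,\dots,g_n)$ with each $g_i\in\H\beta$. Since $\beta$ restricted to $\H$ is surjective onto $\H\beta$, for each $i$ choose $f_i\in\H$ with $f_i\beta=g_i$, and set $v:=(f_1,\dots,f_n)\in\W(\H)$; then $v\beta^*=v'$. It remains to show $v\in\D$. This is exactly where Lemma~\ref{ModCentral1} enters: since $\N=\ker(\beta)\subseteq Z(\L)$, that lemma asserts that for all $v\in\W(\L)$ we have $v\in\D$ if and only if $v\beta^*\in\D'$. Applying this with our chosen $v$, and using $v\beta^*=v'\in\D'$, we conclude $v\in\D$, hence $v\in\D\cap\W(\H)$ and $v'=v\beta^*\in(\D\cap\W(\H))\beta^*$. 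This establishes the reverse inclusion, so equality holds, and the remaining conclusions follow from Remark~\ref{PartialSubgroupProjection}.

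The only mild subtlety — and the thing I would be careful to state explicitly — is that the choice of the preimages $f_i\in\H$ is legitimate because $\beta|_\H$ maps \emph{onto} $\H\beta$ by definition of the set $\H\beta=\{h\beta\colon h\in\H\}$; no surjectivity of $\beta$ itself on all of $\L$ is needed for this step, only that each entry of $v'$ lies in $\H\beta$. There is no real obstacle here: the work has essentially all been done in Lemma~\ref{ModCentral1}, whose proof handles the genuinely nontrivial input (that every element of $\L$ is $\uparrow$-maximal relative to a central $\N$, so that \cite[Theorem~4.4(b)]{Chermak:2015} applies to arbitrary words). I would therefore present this as a short three-line argument rather than anything requiring new ideas.

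\begin{proof}
By Remark~\ref{PartialSubgroupProjection}, we have $(\D\cap\W(\H))\beta^*\subseteq \D'\cap\W(\H\beta)$, and it suffices to prove the reverse inclusion, as then Remark~\ref{PartialSubgroupProjection} yields all remaining assertions. So let $v'=(g_1,\dots,g_n)\in\D'\cap\W(\H\beta)$. For each $i=1,\dots,n$, since $g_i\in\H\beta$, we may pick $f_i\in\H$ with $f_i\beta=g_i$. Set $v:=(f_1,\dots,f_n)\in\W(\H)$, so that $v\beta^*=v'\in\D'$. Since $\ker(\beta)\subseteq Z(\L)$, Lemma~\ref{ModCentral1} gives that $v\in\D$. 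Hence $v\in\D\cap\W(\H)$ and $v'=v\beta^*\in(\D\cap\W(\H))\beta^*$. This proves $\D'\cap\W(\H\beta)\subseteq(\D\cap\W(\H))\beta^*$, and thus equality holds. By Remark~\ref{PartialSubgroupProjection}, $\H\beta$ is a partial subgroup of $\L'$ and $\beta|_\H\colon\H\rightarrow\H\beta$ is a projection of partial groups.
\end{proof}
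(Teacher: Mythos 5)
Your proof is correct and follows essentially the same route as the paper: reduce via Remark~\ref{PartialSubgroupProjection} to the inclusion $\D'\cap\W(\H\beta)\subseteq(\D\cap\W(\H))\beta^*$, lift each entry of a word in $\D'\cap\W(\H\beta)$ to $\H$, and apply Lemma~\ref{ModCentral1} to conclude the lifted word lies in $\D$. No gaps; nothing further is needed.
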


\begin{proof}
By Remark~\ref{PartialSubgroupProjection}, it is sufficient to show that $\D'\cap\W(\H\beta)\subseteq (\D\cap\W(\H))\beta^*$. Let $w=(f_1,\dots,f_m)\in\D'\cap\W(\H\beta)$. Then, for every $i=1,\dots,m$, there exists $h_i\in\H$ such that $h_i\beta=f_i$. So for $v:=(h_1,\dots,h_m)$ we have $v\in\W(\H)$ and $v\beta^*=w\in\D'$. Hence, by Lemma~\ref{ModCentral1}, $v\in\D$ and $w=v\beta^*\in(\D\cap\W(\H))\beta^*$.

\end{proof}

\begin{lemma}\label{LocalitiesProjectionsPartialNormal}
Let $(\L,\Delta,S)$ be a locality, let $\L'$ be a partial group, and let $\beta\colon \L\rightarrow\L'$ be a projection of partial groups. Let $\N$ be a partial normal subgroup of $\L$. Then $\N\beta$ is a partial normal subgroup of $\L'$. 
\end{lemma}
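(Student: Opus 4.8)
The plan is to verify directly the two defining properties of a partial normal subgroup for $\N\beta$: that it is a partial subgroup of $\L'$, and that it is closed under $\L'$-conjugation.

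First I would establish that $\N\beta$ is a partial subgroup of $\L'$. Since $\beta$ is a projection of partial groups (in particular surjective), every element of $\L'$ is of the form $f\beta$ for some $f\in\L$, and every word $w\in\W(\L')$ lifts to a word $v\in\W(\L)$ with $v\beta^*=w$. For inversion: if $f\in\N\beta$, write $f=n\beta$ with $n\in\N$; then $n^{-1}\in\N$ since $\N$ is a partial subgroup, and $f^{-1}=(n\beta)^{-1}=(n^{-1})\beta\in\N\beta$ by \cite[Lemma~1.13]{Chermak:2015}. For products: let $w=(f_1,\dots,f_m)\in\W(\N\beta)\cap\D'$. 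Here is the first delicate point — I need a word in $\W(\N)\cap\D$ lying over $w$. Using surjectivity of $\beta$ restricted to $\N$, pick $n_i\in\N$ with $n_i\beta=f_i$, giving $v=(n_1,\dots,n_m)\in\W(\N)$ with $v\beta^*=w\in\D'$. Since $\beta$ is a \emph{projection} of partial groups, $\D\beta^*=\D'$, so there exists \emph{some} $u\in\D$ with $u\beta^*=w$; however $u$ need not be $v$, and $v$ need not lie in $\D$. This is the main obstacle. To get around it, I would invoke the machinery of quotient localities: by \cite[Theorem~4.7]{Chermak:2015} we may identify $\L'$ with $\L/\M$ where $\M=\ker(\beta)$ and $\beta$ with the canonical projection, and then apply \cite[Proposition~4.7]{Chermak:2015} or the relevant statement in \cite{Chermak:2015} that $\N\M/\M$ (equivalently $\N\beta$) is a partial normal subgroup of $\L/\M$ whenever $\N$ is a partial normal subgroup of $\L$. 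Alternatively — and this is likely the intended route given the preceding lemmas — I would use \cite[Theorem~4.4]{Chermak:2015} together with the $\uparrow$-maximality characterization: one may replace each $n_i$ by a $\uparrow$-maximal representative $\N$-coset element (or, since we only need $w\in\D'$, use that $w\in\D'$ forces the existence of a suitable chain of subgroups in $\Delta\beta$, which lifts). In any case, once a word $v'\in\W(\N)\cap\D$ with $v'\beta^*=w$ is secured, $\Pi(v')\in\N$ since $\N$ is a partial subgroup, and $\Pi'(w)=\Pi'(v'\beta^*)=(\Pi(v'))\beta\in\N\beta$.

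Next I would verify the normality condition: given $f'\in\L'$ and $n'\in\N\beta\cap\D'(f')$, I must show $(n')^{f'}\in\N\beta$. Write $f'=f\beta$ and $n'=n\beta$ with $f\in\L$, $n\in\N$. The issue is again that $((f')^{-1},n',f')\in\D'$ need not immediately give $(f^{-1},n,f)\in\D$ for these particular lifts. But since $\beta$ is a projection, $\D\beta^*=\D'$, so the word $((f')^{-1},n',f')$ lifts to some $(g^{-1},m,g)\in\D$ with $g\beta=f'$ and $m\beta=n'$; here $m\in\L$ with $m\beta=n'\in\N\beta$, and I need $m\in\N$ — which follows since $\ker(\beta)\subseteq\N$ (kernels of partial group homomorphisms are partial normal, and $m\beta=n\beta$ gives $m\in\N n \subseteq \N$ using that $\N$ is a partial subgroup together with \cite[Lemma~1.4]{Chermak:2015}), or more cleanly: $mn^{-1}\in\ker(\beta)\subseteq\N$ so $m\in\N$. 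Then $m\in\N\cap\D(g)$, so $m^g\in\N$ by partial normality of $\N$ in $\L$, and therefore $(n')^{f'}=\Pi'((g^{-1},m,g)\beta^*)=(m^g)\beta\in\N\beta$.

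Assembling these, $\N\beta$ is a partial subgroup of $\L'$ closed under $\L'$-conjugation, hence a partial normal subgroup of $\L'$. I expect the cleanest writeup to route the partial-subgroup part through the quotient-locality identification $\L'\cong\L/\ker(\beta)$ and cite the corresponding result of Chermak for quotients, since reproving it by hand requires precisely the $\uparrow$-maximality bookkeeping from \cite[Section~3--4]{Chermak:2015}; the conjugation part is genuinely short once the lifting-of-$\D'$-words principle ($\D\beta^*=\D'$) is used together with $\ker(\beta)\subseteq\N$.
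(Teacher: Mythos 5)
Your normality step contains a genuine error: you assert $\ker(\beta)\subseteq\N$ and use it to conclude $m\in\N$. This inclusion is false in general (take $\N=\{\One\}$ and $\beta$ any non-injective projection; the lemma is still true there, but your argument is not), and nothing in the hypotheses forces the kernel into an arbitrary partial normal subgroup $\N$. Without it, knowing $m\beta=n'\in\N\beta$ does not put $m$ in $\N$, so partial normality of $\N$ in $\L$ cannot be applied to $m$. A second, related gap in the same step: $\D\beta^*=\D'$ only provides \emph{some} preimage word $(a,m,c)\in\D$ of $((f')^{-1},n',f')$ with $a\beta=(f')^{-1}$ and $c\beta=f'$; there is no reason this lift has the shape $(g^{-1},m,g)$, so $\Pi(a,m,c)$ is not visibly a conjugate $m^g$ at all. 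The correct replacement for both points is to work with $\M:=\N\ker(\beta)$ rather than $\N$: one needs $m\in\M$, and even the fact that $\M$ is a partial (normal) subgroup is nontrivial -- it is \cite[Theorem~1]{Henke:2015}, and this is precisely where the hypothesis that $(\L,\Delta,S)$ is a locality enters.

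This is in fact how the paper argues, and your fallback for the partial-subgroup half points in the right direction but stops short of the needed ingredients. The paper sets $\M:=\N\ker(\beta)$, cites \cite[Theorem~1]{Henke:2015} to see that $\M$ is a partial normal subgroup of $\L$ containing $\ker(\beta)$, checks by a one-line computation that $\M\beta=\N\beta$ (for $n\in\N$, $x\in\ker(\beta)$ with $(n,x)\in\D$ one has $\Pi(n,x)\beta=\Pi'(n\beta,\One')=n\beta$), and then applies \cite[Proposition~4.8]{Chermak:2015}. The point you are missing is that Chermak's result on images/quotients applies only to partial normal subgroups \emph{containing the kernel}; there is no off-the-shelf statement that $\N\beta$ is partial normal for arbitrary partial normal $\N$ (otherwise the lemma would be a one-line citation). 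So your plan still needs the construction of $\M$, the product theorem from \cite{Henke:2015}, and the identification $\M\beta=\N\beta$ -- none of which your outline supplies -- before the citation you have in mind becomes applicable; and the $\uparrow$-maximality route you sketch as an alternative is not needed once this is done.
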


\begin{proof}
By \cite[Lemma~1.14]{Chermak:2015}, $\ker(\beta)$ forms a partial normal subgroup of $\L$. So by \cite[Theorem~1]{Henke:2015}, $\M:=\N(\ker\beta)$ is a partial normal subgroup of $\L$. Note that $\N\subseteq \M$ as, for any $n\in\N$, $n=\Pi(n)=\Pi(n,\One)\in\M$ by Remark~\ref{Ones}. Similarly one shows $\ker(\beta)\subseteq \M$. As $\N\subseteq\M$ we have $\N\beta\subseteq\M\beta$. Let $n\in\N$ and $x\in\ker(\beta)$ such that $(n,x)\in\D$. Then $\Pi(n,x)\beta=\Pi'(n\beta,x\beta)=\Pi'(n\beta,\One')=\Pi'(n\beta)=n\beta$, where the first equality uses that $\beta$ is a homomorphism of partial groups and the third equality uses Remark~\ref{Ones}. Hence, $\Pi(n,x)=n\beta\in \N\beta$. This shows $\M\beta=\N\beta$. As $\ker(\beta)\subseteq \M$, it follows from \cite[Proposition~4.8]{Chermak:2015} that $\N\beta=\M\beta$ is a partial normal subgroup of $\L'$. 
\end{proof}

\subsection{Sublocalities}

\begin{definition}
Let $\L_0$ be a partial subgroup of $\L$. We say that $(\L_0,\Delta_0,S_0)$ is a \textit{sublocality} of $(\L,\Delta,S)$ if $\L_0$ is a partial subgroup of $\L$, $S_0=S\cap \L_0$, $\Delta_0$ is a set of subgroups of $S_0$ and, regarding $\L_0$ as a partial group with product $\Pi|_{\W(\L_0)\cap\D}$, the triple $(\L_0,\Delta_0,S_0)$ forms a locality. 
\end{definition}

Supposing $\L_0$ is a partial subgroup of $\L$, we remark that $S_0:=S\cap \L_0$ is always a subgroup of $S$ and thus a $p$-subgroup of $\L_0$. If $\Delta_0$ is a non-empty set of subgroups of $S_0$ the $(\L_0,\Delta_0,S_0)$ forms a locality if and only if $\D\cap \W(\L_0)=\D_{\Delta_0}\cap\W(\L_0)$, $\Delta_0$ is closed under taking $\L_0$-conjugates and overgroups in $S_0$, and $S_0$ is maximal with respect to inclusion among the $p$-subgroups of $\L_0$.

\begin{lemma}\label{SublocalityUnderPartialHom}
 Let $(\L',\Delta',S')$ be a locality, and let $\beta\colon \L\rightarrow \L'$ be a homomorphism of partial groups. Let $(\L_0,\Delta_0,S_0)$ be a sublocality of $(\L,\Delta,S)$ with $S_0\beta\subseteq S'$ and $(\D\cap\W(\L_0))\beta^*=\D'\cap \W(\L_0\beta)$. Then $(\L_0\beta,\Delta_0\beta,S_0\beta)$ is a sublocality of $(\L',\Delta',S')$, and $\beta|_{\L_0}\colon \L_0\rightarrow\L_0\beta$ is a projection of localities from $(\L_0,\Delta_0,S_0)$ to $(\L_0\beta,\Delta_0\beta,S_0\beta)$.
\end{lemma}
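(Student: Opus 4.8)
The plan is to verify the definition of sublocality for $(\L_0\beta,\Delta_0\beta,S_0\beta)$ piece by piece, using Remark~\ref{PartialSubgroupProjection} to get the partial subgroup and projection structure for free, and then checking the three locality axioms. First I would record that $\L_0\beta$ is a partial subgroup of $\L'$: by hypothesis $(\D\cap\W(\L_0))\beta^*=\D'\cap\W(\L_0\beta)$, so Remark~\ref{PartialSubgroupProjection} applies directly and also tells us that $\beta|_{\L_0}\colon\L_0\rightarrow\L_0\beta$ is a projection of partial groups. Next I would identify $S'_0:=S'\cap\L_0\beta$ with $S_0\beta$: the inclusion $S_0\beta\subseteq S'\cap\L_0\beta$ is immediate from $S_0\beta\subseteq S'$ and $S_0\subseteq\L_0$; for the reverse inclusion, note $S_0=S\cap\L_0$ and one must see that an element of $\L_0\beta$ lying in $S'$ has a preimage in $S_0$ — but since $\beta|_{\L_0}$ is a projection of partial groups onto $\L_0\beta$ and $S'$ is by $(L1)$ a maximal $p$-subgroup of $\L'$, the image $S_0\beta$ is a $p$-subgroup of $\L_0\beta$ and $\Delta_0\beta$ consists of subgroups of $S_0\beta$; the equality $S'\cap\L_0\beta=S_0\beta$ then follows because $\Delta_0\beta$ is nonempty and, by the remark following the definition of sublocality, $S'\cap\L_0\beta$ would have to contain all of $\bigcup\Delta_0\beta$ and be a $p$-subgroup, forcing it to equal the maximal such subgroup $S_0\beta$.

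Then I would check the locality axioms for $(\L_0\beta,\Delta_0\beta,S_0\beta)$ using the criterion stated right after the definition of sublocality, namely that it suffices to show: (i) $\D'\cap\W(\L_0\beta)=\D_{\Delta_0\beta}\cap\W(\L_0\beta)$; (ii) $\Delta_0\beta$ is closed under $\L_0\beta$-conjugation and overgroups in $S_0\beta$; and (iii) $S_0\beta$ is maximal among $p$-subgroups of $\L_0\beta$. For (iii), a $p$-subgroup of $\L_0\beta$ strictly containing $S_0\beta$ would, together with $S'$ maximal in $\L'$, contradict maximality after pulling back (or one argues directly inside $\L'$). For (ii), conjugation is compatible with the homomorphism $\beta$ (this is how localities transport under projections, as in the discussion before Remark~\ref{StructureTransport} and \cite[Theorem~4.4]{Chermak:2015}), so closure of $\Delta_0$ under $\L_0$-conjugation and $S_0$-overgroups pushes forward to the analogous closure for $\Delta_0\beta$; the overgroup part uses that $\beta|_{S_0}$ is a surjective group homomorphism onto $S_0\beta$ so that overgroups of $P\beta$ in $S_0\beta$ are images of overgroups of $P\ker(\beta|_{S_0})$ in $S_0$. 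For (i), the inclusion $\D_{\Delta_0\beta}\cap\W(\L_0\beta)\supseteq$ the transported domain is the usual direction, and the reverse uses the hypothesis $(\D\cap\W(\L_0))\beta^*=\D'\cap\W(\L_0\beta)$ together with the fact that $(\L_0,\Delta_0,S_0)$ is already a locality, hence $\D\cap\W(\L_0)=\D_{\Delta_0}\cap\W(\L_0)$, and that $\D_{\Delta_0}$ maps into $\D_{\Delta_0\beta}$.

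Having verified that $(\L_0\beta,\Delta_0\beta,S_0\beta)$ is a locality sitting inside $(\L',\Delta',S')$ with $S_0\beta=S'\cap\L_0\beta$, it is by definition a sublocality. Finally, $\beta|_{\L_0}$ is a projection of partial groups from $\L_0$ onto $\L_0\beta$ by the first step, and $\Delta_0\beta$ is by construction the image of $\Delta_0$, so $\beta|_{\L_0}$ is a projection of localities from $(\L_0,\Delta_0,S_0)$ to $(\L_0\beta,\Delta_0\beta,S_0\beta)$, as required. The main obstacle I expect is step two — pinning down $S'\cap\L_0\beta=S_0\beta$ rather than merely $\supseteq$ — since this is where one must combine the maximality axiom $(L1)$ in $\L'$ with the sublocality setup rather than just transporting structure formally; the domain equation (i) in the reverse direction is the other place where the nontrivial hypothesis on $\beta$ is genuinely used.
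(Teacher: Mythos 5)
Your opening step coincides with the paper's: Remark~\ref{PartialSubgroupProjection} together with the hypothesis $(\D\cap\W(\L_0))\beta^*=\D'\cap\W(\L_0\beta)$ gives that $\L_0\beta$ is a partial subgroup of $\L'$ and that $\beta|_{\L_0}\colon\L_0\rightarrow\L_0\beta$ is a projection of partial groups. But at that point the paper does not verify (L1)--(L3) for the image by hand; it applies \cite[Theorem~4.4]{Chermak:2015} to this projection of partial groups from the locality $(\L_0,\Delta_0,S_0)$, which immediately yields that $(\L_0\beta,\Delta_0\beta,S_0\beta)$ is a locality, in particular that $S_0\beta$ is a maximal $p$-subgroup of $\L_0\beta$. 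Only then is $S'\cap(\L_0\beta)=S_0\beta$ deduced: $S'\cap(\L_0\beta)$ is a subgroup of $S'$, hence a $p$-subgroup of $\L_0\beta$ containing $S_0\beta$, and maximality forces equality. Your proposal inverts this order, and the arguments you substitute have genuine gaps.

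Concretely: (1) your identification $S'\cap\L_0\beta=S_0\beta$ rests on ``$S'\cap\L_0\beta$ contains $\bigcup\Delta_0\beta$, forcing it to equal the maximal such subgroup $S_0\beta$'' --- but containing the objects is irrelevant, and the maximality of $S_0\beta$ among $p$-subgroups of $\L_0\beta$ is precisely what you have not yet established at that stage, so the step is circular; moreover an element of $S'\cap\L_0\beta$ need not have a preimage in $S_0$. (2) Your proof of maximality of $S_0\beta$ by ``pulling back'' a larger $p$-subgroup does not work: there is no hypothesis on $\ker(\beta|_{\L_0})$, so preimages of $p$-subgroups need not be $p$-subgroups, nor need a $p$-subgroup of $\L_0\beta$ properly containing $S_0\beta$ lie in $S'$. (3) In your item (i) you have the two inclusions the wrong way round: the hypothesis plus $\D_{\Delta_0}\beta^*\subseteq\D_{\Delta_0\beta}$ gives the easy inclusion $\D'\cap\W(\L_0\beta)\subseteq\D_{\Delta_0\beta}$, whereas the substantive direction is that every word witnessed by objects of $\Delta_0\beta$ actually lies in $\D'$; since $\Delta_0\beta$ need not be contained in $\Delta'$, this cannot be read off from $(\L',\Delta',S')$, and your sketch of (ii) likewise needs lifts of elements of $\L_0\beta$ to $\L_0$ compatible with conjugation domains. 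Without a centrality assumption on the kernel, these lifting statements are exactly the content of \cite[Theorem~4.4]{Chermak:2015}, proved there via $\uparrow$-maximal representatives; quoting that theorem after your first step, as the paper does, closes all of these gaps at once.
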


\begin{proof}
By Remark~\ref{PartialSubgroupProjection}, $\L_0\beta$ is a partial subgroup of $\L'$ and $\beta|_{\L_0}\colon \L_0\rightarrow \L_0\beta$ is a projection of partial groups. 
So by \cite[Theorem~4.4]{Chermak:2015}, $(\L_0\beta,\Delta_0\beta,S_0\beta)$ is a locality. In particular, $S_0\beta$ is a maximal $p$-subgroup of $\L_0\beta$. As $\L_0\beta$ is a partial subgroup of $\L'$, $S'\cap (\L_0\beta)$ is a subgroup of $S'$ and thus a $p$-subgroup of $\L_0\beta$. Since $S_0\beta\subseteq S'\cap (\L_0\beta)$ it follows $S_0\beta=S'\cap(\L_0\beta)$. So $(\L_0\beta,\Delta_0\beta,S_0\beta)$ is a sublocality of $(\L',\Delta',S')$. Clearly, $\beta|_{\L_0}$ is a projection of localities from $(\L_0,\Delta_0,S_0)$ to $(\L_0\beta,\Delta_0\beta,S_0\beta)$.
\end{proof}

\begin{lemma}\label{SublocalityUnderProjection}
Let $(\L',\Delta',S')$ be a locality and let $\beta\colon\L\rightarrow\L'$ be a projection from $(\L,\Delta,S)$ to  $(\L',\Delta',S')$. Assume $\ker(\beta)\subseteq Z(\L)$. Let $(\L_0,\Delta_0,S_0)$ be a sublocality of $(\L,\Delta,S)$. Then $(\L_0\beta,\Delta_0\beta,S_0\beta)$ is a sublocality of $(\L',\Delta',S')$ and $\beta|_{\L_0}\colon \L_0\rightarrow \L_0\beta$ is a projection of localities from $(\L_0,\Delta_0,S_0)$ to $(\L_0\beta,\Delta_0\beta,S_0\beta)$. 
\end{lemma}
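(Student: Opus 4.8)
The idea is to reduce this to Lemma~\ref{SublocalityUnderPartialHom}, which only asks for two conditions: that $S_0\beta \subseteq S'$ and that $(\D\cap\W(\L_0))\beta^* = \D'\cap\W(\L_0\beta)$. The first condition is immediate: since $\beta$ is a projection of localities from $(\L,\Delta,S)$ to $(\L',\Delta',S')$ we have $S\beta = S'$, and $S_0 \subseteq S$, so $S_0\beta \subseteq S' $. The whole weight of the argument therefore falls on verifying the equality of the domains $(\D\cap\W(\L_0))\beta^* = \D'\cap\W(\L_0\beta)$.

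For that equality, the inclusion $(\D\cap\W(\L_0))\beta^* \subseteq \D'\cap\W(\L_0\beta)$ is trivial from the fact that $\beta$ is a homomorphism of partial groups (it holds for any partial subgroup $\H$ by Remark~\ref{PartialSubgroupProjection}). For the reverse inclusion I would invoke the hypothesis $\ker(\beta)\subseteq Z(\L)$ together with Lemma~\ref{LocalitiesProjectionsModCentral} applied to the partial subgroup $\H := \L_0$: that lemma says precisely that $(\D\cap\W(\L_0))\beta^* = \D'\cap\W(\L_0\beta)$ (note that $\L_0$ being a sublocality of $(\L,\Delta,S)$ in particular makes it a partial subgroup of $\L$, so the hypothesis of Lemma~\ref{LocalitiesProjectionsModCentral} is met). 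Thus the centrality of the kernel is exactly what upgrades the one-sided inclusion coming from Remark~\ref{PartialSubgroupProjection} to the equality required by Lemma~\ref{SublocalityUnderPartialHom}.

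Once both hypotheses of Lemma~\ref{SublocalityUnderPartialHom} are in place, that lemma delivers directly that $(\L_0\beta,\Delta_0\beta,S_0\beta)$ is a sublocality of $(\L',\Delta',S')$ and that $\beta|_{\L_0}\colon\L_0\rightarrow\L_0\beta$ is a projection of localities from $(\L_0,\Delta_0,S_0)$ to $(\L_0\beta,\Delta_0\beta,S_0\beta)$, which is the entire assertion. So the proof is essentially a two-line citation: check $S_0\beta\subseteq S'$, quote Lemma~\ref{LocalitiesProjectionsModCentral} for the domain equality, and then apply Lemma~\ref{SublocalityUnderPartialHom}.

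I do not anticipate a genuine obstacle here; the only thing to be careful about is making sure the logical dependencies line up — in particular that Lemma~\ref{LocalitiesProjectionsModCentral} is stated for an arbitrary partial subgroup $\H$ of $\L$ (it is) and that a sublocality's underlying set $\L_0$ indeed qualifies as such a partial subgroup (it does, by definition of sublocality). There is a minor point worth a sentence: Lemma~\ref{SublocalityUnderPartialHom} is stated for a homomorphism $\beta$, and our $\beta$ is a projection, hence in particular a homomorphism, so the citation is legitimate.
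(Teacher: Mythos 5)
Your proposal is correct and follows exactly the paper's own argument: verify $S_0\beta\subseteq S\beta=S'$, invoke Lemma~\ref{LocalitiesProjectionsModCentral} (using $\ker(\beta)\subseteq Z(\L)$) to get $(\D\cap\W(\L_0))\beta^*=\D'\cap\W(\L_0\beta)$, and conclude via Lemma~\ref{SublocalityUnderPartialHom}. Nothing to add.
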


\begin{proof}
 By Lemma~\ref{LocalitiesProjectionsModCentral}, we have $(\D\cap\W(\L_0))\beta^*=\D'\cap \W(\L_0\beta)$. As $\beta$ is a projection of localities, $S_0\beta\subseteq S\beta=S'$. Hence, the assertion follows from Lemma~\ref{SublocalityUnderPartialHom}. 
\end{proof}

\subsection{Linking localities}

For the convenience of the reader we repeat the following definitions from \cite{Henke:2015}.

\begin{definition}
\noindent\begin{itemize}
\item A finite group $G$ is said to be of \textit{characteristic $p$} if $C_G(O_p(G))\leq O_p(G)$.
\item Define a locality $(\L,\Delta,S)$ to be of \textit{objective characteristic $p$} if, for any $P\in\Delta$, the group $N_\L(P)$ is of characteristic $p$. 
\item A locality $(\L,\Delta,S)$ is called a \textit{linking locality}, if $\F_S(\L)^{cr}\subseteq \Delta$ and $(\L,\Delta,S)$ is of objective characteristic $p$.
\end{itemize}
\end{definition}
 
If $(\L,\Delta,S)$ is a linking locality over $\F$, then it turns out that $\Delta\subseteq\F^s$ where $\F^s$ is defined as follows:

\begin{definition}
Let $\F$ be a fusion system over $S$. A subgroup $Q\leq S$ is said to be \textit{subcentric} in $\F$ if, for any fully normalized $\F$-conjugate $P$ of $Q$, $O_p(N_\F(P))$ is centric in $\F$. Write $\F^s$ for the set of subcentric subgroups of $\F$.
\end{definition}

Let $\F$ be a saturated fusion system over $S$. In \cite[Theorem~A]{Henke:2015} we show that the set $\F^s$ is closed under taking $\F$-conjugates and overgroups in $S$. Moreover, if $\F^{cr}\subseteq\Delta\subseteq\F^s$ and $\Delta$ is closed under taking $\F$-conjugates and overgroups in $S$, then we prove that there exists a linking locality over $\F$ with object set $\Delta$ which is essentially unique.

\begin{prop}\label{IsoFusionIsoLinkingLocality}
 Let $\F$ and $\F'$ be fusion systems over $S$ and $S'$ respectively. Let $(\L,\Delta,S)$ be a linking locality over $\F$, and let $(\L',\Delta',S')$ be a linking locality over $\F'$. Suppose $\alpha\colon S\rightarrow S'$ induces an isomorphism $\F$ rightarrow $\F'$. Assume furthermore that $\Delta\alpha=\Delta'$. Then there exists $\beta\colon \L\rightarrow\L'$ such that $\beta$ is an isomorphism of localities from $(\L,\Delta,S)$ to $(\L',\Delta',S')$ with $\beta|_S=\alpha$. 
\end{prop}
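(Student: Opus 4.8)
The plan is to reduce the statement to the essential uniqueness theorem for linking localities from \cite{Henke:2015}. First I would use $\alpha$ to ``transport'' the locality $(\L,\Delta,S)$ to a new locality $(\L_1,\Delta_1,S_1)$ that literally sits over $\F'$. Since $\alpha\colon S\rightarrow S'$ is an isomorphism of groups inducing an isomorphism $\F\rightarrow\F'$, and $\Delta$ is a set of subgroups of $S$ closed under $\F$-conjugacy and overgroups, the image $\Delta\alpha = \Delta'$ is a set of subgroups of $S'$ with the analogous closure properties with respect to $\F'$. The subtle point is that $\alpha$ is only defined on $S$, not on all of $\L$, so I cannot directly apply Remark~\ref{StructureTransport}. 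To get around this, I would invoke Proposition~\ref{IsoFusionIsoLinkingLocality}'s intended proof route: extend $\alpha$ to a bijection $\L\rightarrow\L_1$ for some set $\L_1$ by first forming the ``abstract'' transport of the partial group structure. Concretely, pick any bijection $\gamma\colon\L\rightarrow\L_1$ onto a set $\L_1$ with $\gamma|_S = \alpha$ (possible since $\alpha$ is injective on $S$ with image $S'\leq\L_1$ once we decree $S' \subseteq \L_1$); use Remark~\ref{StructureTransport} to make $\L_1$ a partial group so that $\gamma$ is an isomorphism of partial groups and $(\L_1,\Delta\gamma,S\gamma) = (\L_1,\Delta_1,S')$ is a locality with $\gamma|_S=\alpha$ an isomorphism of localities.

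Next I would check that $(\L_1,\Delta_1,S')$ is a linking locality over $\F'$. Being a linking locality is preserved under isomorphism of localities: $\gamma$ carries $N_\L(P)$ isomorphically to $N_{\L_1}(P\gamma)$ for each $P\in\Delta$, so objective characteristic $p$ is preserved, and $\gamma$ induces an isomorphism of fusion systems $\F_S(\L)\rightarrow\F_{S'}(\L_1)$ on $S'$. Since $\gamma|_S = \alpha$ and $\alpha$ induces the isomorphism $\F\rightarrow\F'$, we get $\F_{S'}(\L_1) = \F'$; and then $\F'^{cr} = \F^{cr}\alpha \subseteq \Delta\alpha = \Delta_1$, where I use that an isomorphism of fusion systems takes centric-radical subgroups to centric-radical subgroups (as noted in the proof of Lemma~\ref{CentralQuotient}, citing \cite[Lemma~3.6]{Henke:2015}). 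Hence $(\L_1,\Delta_1,S')$ is a linking locality over $\F'$ with object set $\Delta_1 = \Delta'$.

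Now both $(\L_1,\Delta_1,S') = (\L_1,\Delta',S')$ and $(\L',\Delta',S')$ are linking localities over the same fusion system $\F'$ with the same object set $\Delta'$. By the essential uniqueness part of \cite[Theorem~A]{Henke:2015} (quoted in the subsection on linking localities), there is an isomorphism of localities $\delta\colon\L_1\rightarrow\L'$ from $(\L_1,\Delta',S')$ to $(\L',\Delta',S')$; moreover one may arrange $\delta|_{S'} = \id_{S'}$, since the uniqueness statement produces an isomorphism restricting to a prescribed automorphism of $S'$ compatible with the identity on $\F'$ — here the identity. Then $\beta := \gamma\delta\colon\L\rightarrow\L'$ is a composite of isomorphisms of localities, hence an isomorphism of localities from $(\L,\Delta,S)$ to $(\L',\Delta',S')$, and $\beta|_S = (\gamma|_S)(\delta|_{S'}) = \alpha\circ\id_{S'} = \alpha$, as required.

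The main obstacle I anticipate is the bookkeeping around the first step: the essential uniqueness theorem in \cite{Henke:2015} is presumably stated for two linking localities over the \emph{same} $\F$ on the \emph{same} $S$, so the real work is packaging the change of ambient $p$-group via $\alpha$ into an honest isomorphism of localities before that theorem can be applied, and making sure the uniqueness theorem can be invoked with control over the restriction to $S'$ (rather than merely up to an automorphism of $S'$ inducing an automorphism of $\F'$). If \cite[Theorem~A]{Henke:2015} only gives an isomorphism restricting to \emph{some} automorphism $\epsilon$ of $S'$ with $\epsilon$ inducing an automorphism of $\F'$, then I would need the additional observation — standard for linking localities, cf.\ the rigidity of centric linking systems — that such $\epsilon$ can be absorbed, or alternatively rephrase the target as transporting along $\alpha\epsilon^{-1}$ from the start; in any case this is a routine adjustment rather than a genuine difficulty.
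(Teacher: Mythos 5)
Your proposal is correct and follows essentially the same route as the paper: transport the locality structure along $\alpha$ via Remark~\ref{StructureTransport} to obtain a linking locality over $\F'$ with object set $\Delta'$, then apply the uniqueness part of \cite[Theorem~A(a)]{Henke:2015}, which indeed supplies a \emph{rigid} isomorphism (identity on $S'$), so the adjustment you worried about at the end is unnecessary, and composing gives $\beta$ with $\beta|_S=\alpha$. The only cosmetic difference is the set-theoretic bookkeeping: the paper first replaces $\L$ by an isomorphic copy with $(\L\setminus S)\cap S'=\emptyset$ before defining the bijection extending $\alpha$, which is exactly the point you handle by decreeing $S'\subseteq\L_1$.
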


\begin{proof}
 By Remark~\ref{StructureTransport}, we can replace the set $\L$ by another isomorphic set if necessary and assume without loss of generality that $(\L\backslash S)\cap S'=\emptyset$. Set $\hat{\L}:=(\L\backslash S)\cup S'$. Then the map $\rho\colon \L\rightarrow\hat{\L}$ with $\rho|_{\L\backslash S}=\id$ and $\rho|_S=\alpha$ is a bijection. Hence, by Remark~\ref{StructureTransport}, we can turn $\hat{\L}$ into a partial group such that $(\hat{\L},\Delta',S')$ is a locality and $\rho$ is an isomorphism from the locality $(\L,\Delta,S)$ to the locality $(\hat{\L},\Delta',S')$. Then by \cite[Theorem~5.7(b)]{Henke:2015}, $\rho|_S=\alpha\colon S\rightarrow S'$ induces an isomorphism from $\F=\F_S(\L)$ to $\F_{S'}(\hat{\L})$. As $\alpha$ induces an isomorphism from $\F$ to $\F'$, it follows $\F_{S'}(\hat{\L})=\F'$. Since $(\L,\Delta,S)$ is a linking locality, $(\hat{\L},\Delta',S')$ is a linking locality as well. So $(\hat{\L},\Delta',S')$ and $(\L',\Delta',S')$ are both linking localities over $\F'$. Hence, by \cite[Theorem~A(a)]{Henke:2015}, there exists a rigid isomorphism $\gamma\colon \hat{\L}\rightarrow \L'$ (i.e. an isomorphism with $\gamma|_{S'}=\id_{S'}$). Then $\beta:=\rho\circ \gamma\colon \L\rightarrow\L'$ is an isomorphism of localities from $(\L,\Delta,S)$ to $(\L',\Delta',S')$ with $\beta|_S=(\rho|_S)\circ(\gamma|_{S'})=\alpha\circ\id_{S'}=\alpha$. 
\end{proof}

\section{External direct products of partial groups} \label{SectionDirectProductExternalPartialGroups}

For $i=1,2$ let $\L_i$ be a partial group with product $\Pi_i\colon \D_i\rightarrow \L_i$ and inversion map $\L_i\rightarrow \L_i,f\mapsto f^{-1}$. Let
\[\L=\L_1\times\L_2=\{(f,g)\colon f\in\L_1,\;g\in\L_2\}.\]
be the set theoretic product of $\L_1$ with $\L_2$. We will define a partial product and an inversion map on $\L$ which turns $\L$ into a partial group.

\smallskip

For any word $u=((f_1,g_1),(f_2,g_2),\dots,(f_n,g_n))\in\W(\L)$, we set $u_1:=(f_1,f_2,\dots,f_n)$ and $u_2:=(g_1,g_2,\dots,g_n)$. If $u=\emptyset$ then we mean here $u_i=\emptyset$ for $i=1,2$. So in any case, $u_i\in\W(\L_i)$ for $i=1,2$. 
Set
\[\D=\{u\in\W(\L)\colon u_i\in\D_i\mbox{ for each }i=1,2\}.\]
Define
\[\Pi\colon \D\rightarrow \L,\;u\mapsto (\Pi_1(u_1),\Pi_2(u_2)).\]
Note that in particular, $\One:=\Pi(\emptyset)=(\Pi_1(\emptyset),\Pi_2(\emptyset))=(\One,\One)$ (where $\One$ denotes also $\Pi_i(\emptyset)$ for $i=1,2$). 
If $f=(f_1,f_2)\in\L$ with $f_i\in\L_i$ for $i=1,2$, set 
\[f^{-1}=(f_1^{-1},f_2^{-1}).\]

\begin{lemma}
 The set $\L=\L_1\times\L_2$ with the partial product $\Pi\colon\D\rightarrow \L$ and the inversion defined above forms a partial group. 
\end{lemma}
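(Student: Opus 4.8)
The plan is to verify the partial group axioms from \cite[Definition~1.1]{Chermak:2015} componentwise, reducing each axiom for $\L=\L_1\times\L_2$ to the corresponding axiom already known to hold in $\L_1$ and in $\L_2$. The key observation throughout is that for a word $u\in\W(\L)$ we have $u\in\D$ if and only if $u_1\in\D_1$ and $u_2\in\D_2$, that taking components commutes with concatenation (that is, $(u\circ v)_i=u_i\circ v_i$), and that taking components commutes with the inversion map (if $w=\Pi(u)$ is a single element then taking the reversed inverse word on $\L$ projects to the reversed inverse words on $\L_1,\L_2$). These three compatibilities are immediate from the definitions of $\D$, $\Pi$, concatenation, and the inversion $(f_1,f_2)^{-1}=(f_1^{-1},f_2^{-1})$.

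First I would record these compatibilities explicitly as a preliminary paragraph, including the fact that $\L\subseteq\D$ (words of length one): for $f=(f_1,f_2)\in\L$ the associated components are $(f_1)\in\W(\L_1)$ and $(f_2)\in\W(\L_2)$, and length-one words always lie in $\D_1$, $\D_2$ respectively. Then I would check the axioms in turn. For the axiom that $\D$ is closed under subwords and concatenation of subwords whose product-replacements stay in $\D$: given $u\circ v\in\D$, both $u_1\circ v_1\in\D_1$ and $u_2\circ v_2\in\D_2$, so by the axiom in each $\L_i$ the subwords $u_i,v_i$ lie in $\D_i$ and the word $u_i\circ(\Pi_i(v_i))\circ\cdots$ lies in $\D_i$; reassembling componentwise gives the statement in $\L$, and the product formula $\Pi(u)=(\Pi_1(u_1),\Pi_2(u_2))$ makes the required equality $\Pi(u\circ v)=\Pi(u\circ(\Pi(v)))$ follow from the two equalities in $\L_1$ and $\L_2$. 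For the identity axiom, $\One=(\One,\One)$ and $(\One)\circ u$, $u\circ(\One)$ project componentwise to the analogous words in $\L_i$, so $\Pi((\One)\circ u)=u$ etc.\ follows componentwise. For the inversion axiom, $f^{-1}=(f_1^{-1},f_2^{-1})$ is an involutory bijection because each $f\mapsto f_i^{-1}$ is, and for $u=(h_1,\dots,h_n)\in\D$ with $h_j=(a_j,b_j)$ we have $u^{-1}=(h_n^{-1},\dots,h_1^{-1})$ projecting to $(a_n^{-1},\dots,a_1^{-1})=(u_1)^{-1}\in\D_1$ and likewise for the second component, so $u^{-1}\in\D$ and $\Pi(u^{-1})=(\Pi_1((u_1)^{-1}),\Pi_2((u_2)^{-1}))=(\Pi_1(u_1)^{-1},\Pi_2(u_2)^{-1})=\Pi(u)^{-1}$, using \cite[Lemma~1.4(f)]{Chermak:2015} in each factor; also $u\circ u^{-1}\in\D$ since its components $u_i\circ(u_i)^{-1}$ lie in $\D_i$.

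I do not expect a serious obstacle here: the proof is a routine componentwise unwinding, and the only point requiring a little care is bookkeeping the indices when a word is inverted (reversal of order together with entrywise inversion) and when subwords are replaced by their products, to make sure the componentwise reductions are applied to the right words. I would therefore keep the write-up brief, isolating the three compatibility facts first and then citing them as each axiom is checked, so that the verification of each axiom is a one- or two-line appeal to the corresponding axiom in $\L_1$ and $\L_2$ together with the product formula.

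\begin{proof}
Recall from \cite[Definition~1.1]{Chermak:2015} that to prove $\L$ is a partial group we must verify: (1) $\L\subseteq\D$; (2) if $u\circ v\in\D$ then $u\in\D$ and $v\in\D$; (3) if $u\circ v\circ w\in\D$ then $u\circ(\Pi(v))\circ w\in\D$ and $\Pi(u\circ v\circ w)=\Pi(u\circ(\Pi(v))\circ w)$; (4) $\emptyset\in\D$ and $\Pi((\One)\circ u)=\Pi(u)=\Pi(u\circ(\One))$ for all $u\in\D$ (with $\One=\Pi(\emptyset)$, noting $(\One)\circ u$, $u\circ(\One)\in\D$); and (5) $f\mapsto f^{-1}$ is an involution with $u^{-1}\circ u\in\D$ and $\Pi(u^{-1}\circ u)=\One$ for all $u\in\D$.

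We first record three compatibility facts. For a word $u\in\W(\L)$, by definition $u\in\D$ if and only if $u_1\in\D_1$ and $u_2\in\D_2$. For words $u,v\in\W(\L)$ we have $(u\circ v)_i=u_i\circ v_i$ for $i=1,2$, directly from the definitions of concatenation and of $u_i$. Finally, for $u=(h_1,\dots,h_n)\in\W(\L)$ with $h_j=(a_j,b_j)$ ($a_j\in\L_1$, $b_j\in\L_2$), the inverse word $u^{-1}:=(h_n^{-1},\dots,h_1^{-1})$ satisfies $(u^{-1})_1=(a_n^{-1},\dots,a_1^{-1})=(u_1)^{-1}$ and likewise $(u^{-1})_2=(u_2)^{-1}$, using $h_j^{-1}=(a_j^{-1},b_j^{-1})$. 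We also note $\Pi(w)=(\Pi_1(w_1),\Pi_2(w_2))$ by definition of $\Pi$, for $w\in\D$.

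\textit{(1)} For $f=(f_1,f_2)\in\L$ the length-one word $(f)$ has components $(f_1)\in\W(\L_1)$ and $(f_2)\in\W(\L_2)$, which lie in $\D_1$ and $\D_2$ respectively since $\L_i\subseteq\D_i$. Hence $(f)\in\D$.

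\textit{(2)} If $u\circ v\in\D$, then $u_1\circ v_1=(u\circ v)_1\in\D_1$ and $u_2\circ v_2=(u\circ v)_2\in\D_2$, so by axiom (2) in $\L_i$ we get $u_i\in\D_i$ and $v_i\in\D_i$ for $i=1,2$; thus $u\in\D$ and $v\in\D$.

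\textit{(3)} Suppose $u\circ v\circ w\in\D$, so $u_i\circ v_i\circ w_i=(u\circ v\circ w)_i\in\D_i$ for $i=1,2$. Note that $u\circ(\Pi(v))\circ w$ has $i$-th component $u_i\circ(\Pi_i(v_i))\circ w_i$, because the single entry $\Pi(v)=(\Pi_1(v_1),\Pi_2(v_2))$ contributes $\Pi_i(v_i)$ in coordinate $i$. By axiom (3) in $\L_i$, $u_i\circ(\Pi_i(v_i))\circ w_i\in\D_i$ and $\Pi_i(u_i\circ v_i\circ w_i)=\Pi_i(u_i\circ(\Pi_i(v_i))\circ w_i)$. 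Combining the two coordinates gives $u\circ(\Pi(v))\circ w\in\D$ and
\[
\Pi(u\circ v\circ w)=(\Pi_1(u_1\circ v_1\circ w_1),\Pi_2(u_2\circ v_2\circ w_2))=(\Pi_1(u_1\circ(\Pi_1(v_1))\circ w_1),\Pi_2(u_2\circ(\Pi_2(v_2))\circ w_2))=\Pi(u\circ(\Pi(v))\circ w).
\]

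\textit{(4)} We have $\emptyset\in\D$ since $\emptyset=\emptyset_i\in\D_i$ for $i=1,2$, and $\One=\Pi(\emptyset)=(\One,\One)$. For $u\in\D$, the word $(\One)\circ u$ has $i$-th component $(\One)\circ u_i$, which lies in $\D_i$ and satisfies $\Pi_i((\One)\circ u_i)=\Pi_i(u_i)$ by axiom (4) in $\L_i$; hence $(\One)\circ u\in\D$ and $\Pi((\One)\circ u)=(\Pi_1((\One)\circ u_1),\Pi_2((\One)\circ u_2))=(\Pi_1(u_1),\Pi_2(u_2))=\Pi(u)$. The identity $\Pi(u\circ(\One))=\Pi(u)$ is proved the same way.

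\textit{(5)} The map $f\mapsto f^{-1}$, $(f_1,f_2)\mapsto(f_1^{-1},f_2^{-1})$, is a bijection and an involution because each $f_i\mapsto f_i^{-1}$ is. Let $u\in\D$, so $u_i\in\D_i$ for $i=1,2$. Then $(u^{-1})_i=(u_i)^{-1}\in\D_i$ by axiom (5) in $\L_i$, so $u^{-1}\in\D$; moreover $(u^{-1}\circ u)_i=(u_i)^{-1}\circ u_i\in\D_i$, so $u^{-1}\circ u\in\D$, and by axiom (5) in $\L_i$ we have $\Pi_i((u_i)^{-1}\circ u_i)=\One$. Therefore $\Pi(u^{-1}\circ u)=(\Pi_1((u_1)^{-1}\circ u_1),\Pi_2((u_2)^{-1}\circ u_2))=(\One,\One)=\One$.

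All five axioms hold, so $(\L,\Pi)$ with the inversion $(f_1,f_2)\mapsto(f_1^{-1},f_2^{-1})$ is a partial group.
\end{proof}
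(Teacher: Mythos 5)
Your proof is correct and takes essentially the same componentwise route as the paper's own proof. One small repair: the axiom list you quote from \cite[Definition~1.1]{Chermak:2015} replaces the actual axiom that $\Pi$ restricts to the identity map on length-one words by the derived unit property $\Pi((\One)\circ u)=\Pi(u)$ (which is \cite[Lemma~1.4(c)]{Chermak:2015}, a consequence rather than an axiom); the omitted check is immediate, since $\Pi((f_1,f_2))=(\Pi_1(f_1),\Pi_2(f_2))=(f_1,f_2)$ because $\Pi_i|_{\L_i}=\id_{\L_i}$, and the paper verifies it explicitly.
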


\begin{proof}
If $u,v\in\W(\L)$ then note that $(u\circ v)_i=u_i\circ v_i$ for $i=1,2$ and similarly, $(u\circ v\circ w)_i=u_i\circ v_i\circ w_i$ for $u,v,w\in\W(\L)$. We will use this property throughout.

\smallskip

 As $\L_i$ is a partial group, $\L_i\subseteq\D_i$ and $\Pi_i|_{\L_i}=\id_{\L_i}$ for $i=1,2$. Since $(f)_i=(f_i)$ for any $f=(f_1,f_2)\in\L$, it follows from the definition of $\D$ that $\L\subseteq\D$. Moreover, for any $f=(f_1,f_2)\in\L$, we have $\Pi(f)=(\Pi_1(f_1),\Pi_2(f_2))=(f_1,f_2)$ by definition of $\Pi$. Hence, $\Pi|_\L=\id_\L$. If $u,v\in\W(\L)$ such that $u\circ v\in\D$, then it follows from the definition of $\D$ that $u_i\circ v_i=(u\circ v)_i\in\D_i$ for $i=1,2$. Thus, as $\L_i$ is a partial group with domain $\D_i$, we have $u_i,v_i\in\D_i$ for $i=1,2$. Hence, again by the definition of $\D$, it follows $u,v\in\D$. This proves the first and the second axiom of a partial group.

\smallskip

Note now that $\Pi(v)_i=\Pi_i(v_i)$ (or more precisely $(\Pi(v))_i=(\Pi_i(v_i))$) for any $v\in\D$ and any $i=1,2$ as $\Pi(v)=(\Pi_1(v_1),\Pi_2(v_2))$. Let $u,v,w\in\W(\L)$ such that $u\circ v\circ w\in\D$. Then for $i=1,2$, we have $u_i\circ v_i\circ w_i=(u\circ v\circ w)_i\in\D_i$ by definition of $\D$. Hence, as $\L_i$ is a partial group, $u_i\circ (\Pi_i(v_i))\circ w_i\in\D_i$ and $\Pi_i(u_i\circ v_i\circ w_i)=\Pi_i(u_i\circ (\Pi_i(v_i))\circ w_i)$. Thus $(u\circ (\Pi(v))\circ w)_i=u_i\circ (\Pi(v))_i\circ w_i=u_i\circ (\Pi_i(v_i))\circ w_i\in\D_i$ for $i=1,2$. Again by the definition of $\D$, it follows $u\circ (\Pi(v))\circ w\in\D$. Moreover, $\Pi(u\circ (\Pi(v))\circ w)_i=\Pi_i((u\circ (\Pi(v))\circ w)_i)=\Pi_i(u_i\circ (\Pi_i(v_i))\circ w_i)=\Pi_i(u_i\circ v_i\circ w_i)=\Pi_i((u\circ v\circ w)_i)=\Pi(u\circ v\circ w)_i$ for $i=1,2$. Hence, $\Pi(u\circ \Pi(v)\circ w)=\Pi(u\circ v\circ w)$. This proves the third axiom of a partial group for $\L$.

\smallskip

As the inversion maps on $\L_i$ is an involutory bijection for each $i=1,2$, the inversion map $\L\rightarrow \L,f\mapsto f^{-1}$ is also an involutory bijection. Note that for any $w\in\W(\L)$, $(w^{-1})_i=(w_i)^{-1}$. If $w\in\D$, then by definition of $\D$, $w_i\in\D_i$ for $i=1,2$. Thus, by the axioms of a partial group for $\L_i$, $(w_i)^{-1}\circ w_i\in\D_i$ and $\Pi_i((w_i)^{-1}\circ w_i)=\One$. Hence, $(w^{-1}\circ w)_i=(w^{-1})_i\circ w_i=(w_i)^{-1}\circ w_i\in\D_i$ for each $i=1,2$. So again by definition of $\D$, $w^{-1}\circ w\in\D$. Moreover, $\Pi(w^{-1}\circ w)=(\Pi_1((w_1)^{-1}\circ w_1),\Pi_2((w_2)^{-1}\circ w_2))=(\One,\One)=\One$. This completes the proof that $\L$ forms a partial group with the product and inversion defined above.
\end{proof}

\begin{definition}
 We call the partial group $\L_1\times\L_2$ constructed above the \textit{(external) direct product} of the partial groups $\L_1$ and $\L_2$.
\end{definition}

\begin{remark}\label{DirectProductPartialGroupIso}
For $i=1,2$ let $\hat{\L}_i$ be a partial group and let $\beta_i\colon \L_i\rightarrow \hat{\L}_i$ be an isomorphism of partial groups. Let $\hat{\L}_1\times\hat{\L}_2$ be the external direct product of $\hat{\L}_1$ and $\hat{\L}_2$. Then the map $\beta\colon \L_1\times\L_2\rightarrow \hat{\L}_1\times\hat{\L}_2$ with $(f,g)\mapsto (f\beta_1,g\beta_2)$ is an isomorphism of partial groups. 
\end{remark}

\begin{proof}
 For $i=1,2$ let $\hat{\Pi}_i\colon\hat{\D}_i\rightarrow \hat{\L}_i$ be the partial product on $\hat{\L}_i$. Write $\hat{\Pi}\colon\hat{\D}\rightarrow \hat{\L}_1\times\hat{\L}_2$ for the partial product on $\hat{\L}_1\times\hat{\L}_2$. Let  $v\in\W(\L_1\times\L_2)$. Observe that, for every $i=1,2$, $(v\beta^*)_i=v_i\beta_i^*$. So using that $\beta_i$ is an isomorphism for each $i=1,2$, we obtain the following equivalence for each $v\in\W(\L)$:
\begin{eqnarray*}
 v\in\D&\Longleftrightarrow& v_i\in\D_i\mbox{ for each }i=1,2\\
&\Longleftrightarrow& v_i\beta_i^*\in\hat{\D}_i\mbox{ for each }i=1,2\\
&\Longleftrightarrow& (v\beta^*)_i\in\hat{\D}_i\mbox{ for each }i=1,2\\
&\Longleftrightarrow& v\beta^*\in\hat{\D}.\\
\end{eqnarray*}
 Hence, $\D\beta^*=\hat{\D}$. Moreover, if $v\in\D$ then $\hat{\Pi}(v\beta^*)=(\hat{\Pi}_1((v\beta^*)_1),\hat{\Pi}_2((v\beta^*)_2))=(\hat{\Pi}_1(v_1\beta_1^*),\hat{\Pi}_2(v_2\beta_2^*))=((\Pi_1(v_1))\beta_1,(\Pi_2(v_2))\beta_2)=
(\Pi_1(v_1),\Pi_2(v_2))\beta=(\Pi(v))\beta$. Clearly $\beta$ is a bijection, so the assertion follows.
\end{proof}

\begin{lemma}\label{DirectSubgroups}
Let $\H_i$ be a partial subgroup of $\L_i$ for $i=1,2$. Then $\H_1\times \H_2$ is a partial subgroup of $\L=\L_1\times\L_2$. If $\H_1$ and $\H_2$ are subgroups of $\L_1$ and $\L_2$ respectively, then $\H_1\times \H_2$ is a subgroup of $\L$ which, regarded as binary group, coincides with the direct product of the (binary) groups $\H_1$ and $\H_2$.  
\end{lemma}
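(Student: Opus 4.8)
The plan is to verify the two defining properties of a partial subgroup directly from the description of the domain $\D$ of $\L=\L_1\times\L_2$ as the set of words $u\in\W(\L)$ with $u_i\in\D_i$ for $i=1,2$, together with the fact that inversion on $\L$ is componentwise. Throughout I would freely use that, for a word $w$ over $\L_1\times\L_2$ with entries in $\H_1\times\H_2$, the coordinate words satisfy $w_1\in\W(\H_1)$ and $w_2\in\W(\H_2)$.

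First I would handle closure under inversion: if $(f,g)\in\H_1\times\H_2$, then $(f,g)^{-1}=(f^{-1},g^{-1})$ by the definition of the inversion on $\L$, and $f^{-1}\in\H_1$, $g^{-1}\in\H_2$ since each $\H_i$ is a partial subgroup of $\L_i$, so $(f,g)^{-1}\in\H_1\times\H_2$. Next, closure under the partial product: given $w\in\W(\H_1\times\H_2)\cap\D$, the membership $w\in\D$ forces $w_i\in\D_i$ for $i=1,2$ by the definition of $\D$; hence $w_i\in\W(\H_i)\cap\D_i$, so $\Pi_i(w_i)\in\H_i$ because $\H_i$ is a partial subgroup, and therefore $\Pi(w)=(\Pi_1(w_1),\Pi_2(w_2))\in\H_1\times\H_2$. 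This establishes the first assertion.

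For the second assertion, assume $\H_i$ is a subgroup of $\L_i$, i.e.\ $\W(\H_i)\subseteq\D_i$. I would show $\W(\H_1\times\H_2)\subseteq\D$: any $w\in\W(\H_1\times\H_2)$ has $w_i\in\W(\H_i)\subseteq\D_i$ for $i=1,2$, so $w\in\D$ by the definition of $\D$. Thus $\H_1\times\H_2$ is a subgroup of $\L$ in the sense of the paper. By \cite[Lemma~1.3]{Chermak:2015} it is then an ordinary group under the total binary operation $(a,b)\mapsto\Pi(a,b)$, and likewise each $\H_i$ under $(a,b)\mapsto\Pi_i(a,b)$. Unwinding the definition of $\Pi$, the product of $(f_1,g_1)$ and $(f_2,g_2)$ is $(\Pi_1(f_1,f_2),\Pi_2(g_1,g_2))$, i.e.\ componentwise multiplication; the identity is $\One=(\One,\One)$ and inverses are componentwise. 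Hence the binary group $\H_1\times\H_2$ is exactly the direct product of the binary groups $\H_1$ and $\H_2$.

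The argument is a routine unwinding of definitions, so I do not anticipate a genuine obstacle; the only point requiring care is to consistently pass between a word $w$ over $\L$ and its two coordinate words $w_1,w_2$, and to invoke the definition of $\D$ (and of ``subgroup of $\L$'' as $\W(\cdot)\subseteq\D$) at the appropriate places.
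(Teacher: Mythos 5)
Your proposal is correct and follows essentially the same route as the paper's proof: verifying closure under inversion and under the partial product via the coordinate words $w_1,w_2$ and the definition of $\D$, then showing $\W(\H_1\times\H_2)\subseteq\D$ and unwinding the definition of $\Pi$ to identify the binary group with the direct product. The only difference is that you make explicit the appeal to \cite[Lemma~1.3]{Chermak:2015} and the componentwise computation, which the paper leaves as an immediate consequence of the definitions.
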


\begin{proof}
Let $f=(f_1,f_2)\in\H_1\times \H_2$ with $f_i\in\H_i$ for $i=1,2$. As $\H_i$ is a partial subgroup, we have $f_i^{-1}\in\H_i$ for $i=1,2$  and thus $f^{-1}=(f_1^{-1},f_2^{-1})\in\H_1\times\H_2$. Let now $w\in\W(\H_1\times\H_2)\cap \D$. Then $w_i\in\W(\H_i)$ for $i=1,2$ and, by definition of $\D$, $w_i\in\D_i$. Hence, $\Pi_i(w_i)\in\H_i$ as $\H_i$ is a partial subgroup for $i=1,2$. Thus $\Pi(w)=(\Pi_1(w_1),\Pi_2(w_2))\in\H_1\times\H_2$. This proves that $\H_1\times\H_2$ is a partial subgroup of $\L$. 

\smallskip

Assume now that $\H_i$ is a subgroup of $\L_i$ for $i=1,2$. Then $\W(\H_i)\subseteq\D_i$ for $i=1,2$. So if $v\in\W(\H_1\times\H_2)$, we have $v_i\in\W(\H_i)\subseteq\D_i$ for $i=1,2$. By definition of $\D$, this implies $v\in\D$ proving $\W(\H_1\times\H_2)\subseteq\D$. So $\H_1\times\H_2$ is a subgroup of $\L$. It follows from the definition of $\L$ and of the direct product of groups that the subgroup $\H_1\times\H_2$ regarded as a binary group coincides with the direct product of the groups $\H_1$ and $\H_2$.
\end{proof}

Let 
\[\pi_i\colon \L\rightarrow \L_i\]
be the projection map for $i=1,2$. This means $\pi_1((f_1,f_2))=f_1$ and $\pi_2((f_1,f_2))=f_2$. 

\begin{lemma}\label{DirectProductsLocalitiesProjections}
 For $i=1,2$, $\pi_i$ is a homomorphism of partial groups. In particular, for any subgroup $\H$ of $\L$, $\H\pi_i$ is a subgroup of $\L_i$ for $i=1,2$. Moreover,  $\H\pi_i$ is a $p$-subgroup of $\L_i$ if $\H$ is a $p$-subgroup of $\L$.
\end{lemma}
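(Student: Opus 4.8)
The plan is to verify directly from the definitions that each projection map $\pi_i\colon\L\to\L_i$ is a homomorphism of partial groups, and then deduce the statements about subgroups. First I would fix $i\in\{1,2\}$ and observe that for any word $v=((f_1,g_1),\dots,(f_n,g_n))\in\W(\L)$ we have $v\pi_i^*=v_i$ in the notation set up before the lemma: applying $\pi_1$ entrywise to $v$ simply strips off the second coordinates, which is exactly the word $u_1$, and similarly for $\pi_2$. Now if $v\in\D$, then by the definition of $\D$ we have $v_i\in\D_i$ for each $i$; in particular $v\pi_i^*=v_i\in\D_i=\D_i'$, so $\D\pi_i^*\subseteq\D_i$. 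Moreover for $v\in\D$ we have $\Pi(v)=(\Pi_1(v_1),\Pi_2(v_2))$, hence $(\Pi(v))\pi_i=\Pi_i(v_i)=\Pi_i(v\pi_i^*)$. Together with the obvious fact that $\pi_i$ maps $\One=(\One,\One)$ to $\One$ and respects inversion ($(f_1,f_2)^{-1}\pi_i=f_i^{-1}=(f_i)^{-1}$, but the homomorphism definition only requires compatibility with $\Pi$, which covers inversion via the partial-group axioms), this shows $\pi_i$ is a homomorphism of partial groups.

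For the ``in particular'' clause, let $\H$ be a subgroup of $\L$, i.e. a partial subgroup with $\W(\H)\subseteq\D$. Since $\pi_i$ is a homomorphism of partial groups and $\H$ is a partial subgroup, I would first check that $\H\pi_i$ is a partial subgroup of $\L_i$: it is closed under inversion because $(f^{-1})\pi_i=(f\pi_i)^{-1}$ (using \cite[Lemma~1.13]{Chermak:2015}), and closed under products of words lying in $\D_i$ because any such word in $\W(\H\pi_i)\cap\D_i$ lifts, entry by entry, to a word $u\in\W(\H)$, which then lies in $\D$ (as $\W(\H)\subseteq\D$) with $\Pi(u)\in\H$, so the product of the original word equals $(\Pi(u))\pi_i\in\H\pi_i$. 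To upgrade ``partial subgroup'' to ``subgroup'', I need $\W(\H\pi_i)\subseteq\D_i$: given $w\in\W(\H\pi_i)$, lift it entrywise to $u\in\W(\H)\subseteq\D$, so $u_i=w\in\D_i$ by the definition of $\D$; hence $\W(\H\pi_i)\subseteq\D_i$ and $\H\pi_i$ is a subgroup of $\L_i$.

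Finally, for the $p$-subgroup statement: if $\H$ is a $p$-subgroup of $\L$, then as a (binary) group its order is a power of $p$, and $\pi_i|_\H\colon\H\to\H\pi_i$ is a surjective group homomorphism between ordinary finite groups, so $|\H\pi_i|$ divides $|\H|$ and is therefore also a power of $p$; thus $\H\pi_i$ is a $p$-subgroup of $\L_i$.

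I do not expect any serious obstacle here — the argument is entirely formal, the only points requiring a little care being the bookkeeping identity $v\pi_i^*=v_i$ and the fact that lifting a word in $\W(\H\pi_i)$ to a word in $\W(\H)$ is legitimate because $\pi_i|_\H$ is surjective onto $\H\pi_i$. The step most worth stating cleanly is the verification that $\H\pi_i$ is closed under products, since that is where one must invoke $\W(\H)\subseteq\D$ together with the homomorphism property of $\pi_i$.
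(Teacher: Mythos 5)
Your proof is correct and takes essentially the same route as the paper: the core verification that $\pi_i$ is a homomorphism of partial groups (via $v\pi_i^*=v_i$, $\D\pi_i^*\subseteq\D_i$ and $(\Pi(v))\pi_i=\Pi_i(v_i)$) is identical. The only difference is that where the paper simply cites \cite[Lemma~1.15]{Chermak:2015} for the fact that the image of a subgroup under a homomorphism of partial groups is again a subgroup (and then observes that $\pi_i|_\H$ is a surjective group homomorphism to get the $p$-group statement), you prove that fact by hand with the entrywise lifting argument, which is fine.
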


\begin{proof}
Let $i\in\{1,2\}$. By \cite[Lemma~1.15]{Chermak:2015}, the image of a subgroup under a homomorphism of partial groups is a subgroup again. So if $\pi_i$ is a homomorphism of partial groups and $\H$ a subgroup of $\L$, then $\H\pi_i$ is a subgroup of $\L_i$ and one observes that $\pi_i|_{\H}\colon \H\rightarrow \H\pi_i$ is a homomorphism of groups. In particular, if $\H$ is a $p$-subgroup of $\L$ then $\H\pi_i$ is a $p$-subgroup. Hence, it is sufficient to prove that $\pi_i$ is a homomorphism of partial groups. That is we need to show that $\D\pi_i^*\subseteq\D_i$ and $(\Pi(w))\pi_i=\Pi_i(w\pi_i^*)$ for all $w\in\D$. Let $w\in\D$. Then $w\pi_i^*=w_i\in\D_i$ by definition of $\D$. Note that $\Pi(w)_i=\Pi_i(w_i)$ as $\Pi(w)=(\Pi_1(w_1),\Pi_2(w_2))$. So $(\Pi(w))\pi_i=\Pi(w)_i=\Pi_i(w_i)=\Pi_i(w\pi_i^*)$. This shows the assertion.
\end{proof}

Define now maps $\iota_1\colon \L_1\rightarrow \L,f\mapsto (f,\One)$ and $\iota_2\colon \L_2\mapsto \L,f\mapsto (\One,f)$. We call $\iota_i$ the inclusion map $\L_i\rightarrow\L$.

\begin{remark}\label{IotaRemark}
 Let $f_i\in\L_i$ for $i=1,2$. Then $((f_1\iota_1),(f_2\iota_2))\in\D$ and $(f_1,f_2)=\Pi((f_1\iota_1),(f_2\iota_2))$.
\end{remark}

\begin{proof}
Let $v=((f_1\iota_1),(f_2\iota_2))=((f_1,\One),(\One,f_2))$. Then $v_1=(f_1,\One)\in\D_1$, $\Pi_1(v_1)=f_1$, $v_2=(\One,f_2)\in\D_2$ and $\Pi_2(v_2)=f_2$ by the axioms of a partial group and by Remark~\ref{Ones}. Hence, $v\in\D$ and $\Pi(v)=(\Pi_1(v_1),\Pi_2(v_2))=(f_1,f_2)$.  
\end{proof}

\begin{lemma}\label{DirectProductsLocalitiesInclusions}
Let $i\in\{1,2\}$. Then the following hold:
\begin{itemize}
 \item [(a)] The subset $\L_i\iota_i$ of $\L=\L_1\times\L_2$ is a partial normal subgroup of $\L$.
 \item [(b)] The map $\iota_i$ is an injective homomorphisms of partial groups which induces an isomorphism of partial groups from $\L_i$ to the partial subgroup $\L_i\iota_i$ of $\L$.  
 \item [(c)] For any partial subgroup $\H$ of $\L_i$, $\H\iota_i$ is a partial subgroup of $\L$. 
\end{itemize} 
\end{lemma}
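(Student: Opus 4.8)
I will treat the case $i=1$; the case $i=2$ is identical after swapping the two coordinates. The single technical point underlying all three parts is that conjugation in $\L=\L_1\times\L_2$ is computed coordinatewise. Indeed, for $f=(f_1,f_2)\in\L$ and $x=(x_1,x_2)\in\L$, the word $(f^{-1},x,f)$ has first coordinate word $(f_1^{-1},x_1,f_1)$ and second coordinate word $(f_2^{-1},x_2,f_2)$, so by the definitions of $\D$ and $\Pi$ we have $x\in\D(f)$ if and only if $x_i\in\D_i(f_i)$ for $i=1,2$, and in that case $x^f=(x_1^{f_1},x_2^{f_2})$. I would record this observation first and then reuse it.

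For (b), I would first verify directly that $\iota_1$ is a homomorphism of partial groups: if $v=(g_1,\dots,g_n)\in\D_1$, then $v\iota_1^{\,*}=((g_1,\One),\dots,(g_n,\One))$ has first coordinate word $v\in\D_1$ and second coordinate word $(\One,\dots,\One)$, which lies in $\D_2$ with product $\One$ by Remark~\ref{Ones}; hence $v\iota_1^{\,*}\in\D$ and $\Pi(v\iota_1^{\,*})=(\Pi_1(v),\One)=(\Pi_1(v))\iota_1$. Plainly $\iota_1$ is injective, and $\L_1\iota_1=\L_1\times\{\One\}$ is a partial subgroup of $\L$ by Lemma~\ref{DirectSubgroups} applied with $\H_1=\L_1$ and $\H_2=\{\One\}$ (a subgroup of $\L_2$ by Remark~\ref{Ones}). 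To see that $\iota_1$ induces an isomorphism onto $\L_1\iota_1$ it then suffices, by Remark~\ref{PartialSubgroupProjection}, to check $\D_1\iota_1^{\,*}=\D\cap\W(\L_1\iota_1)$: the inclusion $\subseteq$ was just shown, and conversely any $w\in\D\cap\W(\L_1\iota_1)$ has the form $((h_1,\One),\dots,(h_n,\One))$, and $w\in\D$ forces $(h_1,\dots,h_n)\in\D_1$, so $w$ is the $\iota_1^{\,*}$-image of a word in $\D_1$. An injective projection of partial groups is an isomorphism, which gives (b).

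Part (c) is then immediate: for a partial subgroup $\H$ of $\L_1$ we have $\H\iota_1=\H\times\{\One\}$, which is a partial subgroup of $\L$ by Lemma~\ref{DirectSubgroups} and Remark~\ref{Ones}, exactly as above (alternatively, as the image of the partial subgroup $\H$ under the homomorphism $\iota_1$, using \cite[Lemma~1.15]{Chermak:2015}). For (a), $\L_1\iota_1$ is a partial subgroup by (b), so only the normality condition remains: given $f=(f_1,f_2)\in\L$ and $n=(n_1,\One)\in\L_1\iota_1$ with $n\in\D(f)$, the coordinatewise description of conjugation yields $n^f=(n_1^{f_1},\,\Pi_2(f_2^{-1},\One,f_2))=(n_1^{f_1},\One)\in\L_1\iota_1$, where the second coordinate is $\One$ by Remark~\ref{Ones} and the partial group axioms. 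No step here is a genuine obstacle; the only care needed is the bookkeeping of coordinate words and the repeated use of Remark~\ref{Ones} to absorb $\One$-entries, which is why I would state the coordinatewise conjugation formula once at the outset.
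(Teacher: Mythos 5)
Your argument is correct, and part (b) is essentially the paper's proof: the same direct verification that $\iota_1$ is a homomorphism via Remark~\ref{Ones}, followed by the two inclusions giving $\D_1\iota_1^{\,*}=\D\cap\W(\L_1\iota_1)$. For (a) and (c) you take a different, somewhat more hands-on route than the paper. The paper proves (a) in one line by observing $\L_i\iota_i=\ker(\pi_{3-i})$ and invoking \cite[Lemma~1.14]{Chermak:2015} (kernels are partial normal subgroups), whereas you verify normality directly from the coordinatewise conjugation formula $x^f=(x_1^{f_1},x_2^{f_2})$ -- which is in fact Remark~\ref{ConjugateDirectProduct}(a),(b), stated in the paper only after this lemma, so proving it inline as you do is the right move and there is no circularity. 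For (c) the paper chains Remark~\ref{IsomorphismOfPartialGroups} (isomorphisms preserve partial subgroups) with \cite[Lemma~1.8(a)]{Chermak:2015} (a partial subgroup of a partial subgroup is a partial subgroup), while you simply note $\H\iota_1=\H\times\{\One\}$ and apply Lemma~\ref{DirectSubgroups}; your version is shorter and arguably cleaner, the paper's has the advantage of not depending on the explicit product structure. One small caveat: your parenthetical alternative for (c), citing \cite[Lemma~1.15]{Chermak:2015} for the image of a \emph{partial} subgroup under a homomorphism, is not justified as stated -- in this paper that lemma is used only for (binary) subgroups, and the image of a partial subgroup under a general homomorphism need not be a partial subgroup (this is exactly why Remark~\ref{PartialSubgroupProjection} carries the hypothesis $(\D\cap\W(\H))\beta^*=\D'\cap\W(\H\beta)$). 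Since this is only an aside and your main argument for (c) is complete, the proof stands.
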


\begin{proof}
Note that $\L_i\iota_i=\ker(\pi_{3-i})$ for $i=1,2$. By \cite[Lemma~1.14]{Chermak:2015}, the kernel of a homomorphism of partial groups is a partial normal subgroup. Hence, (a) holds.

\smallskip

We prove (b) only for $i=1$, as the proof for $i=2$ is analogous. Let $w=(f_1,\dots,f_n)\in\D_1$ and set $u:=w\iota_1^*=((f_1,\One),\dots,(f_n,\One))$. So $u_1=w\in\D_1$ by assumption. Moreover, $u_2=(\One,\dots,\One)\in\D_2$ and $\Pi_2(u_2)=\One$ by Remark~\ref{Ones}. Hence, $u\in\D$ and $\Pi(w\iota_1^*)=\Pi(u)=(\Pi_1(u_1),\Pi_2(u_2))=(\Pi_1(w),\One)=(\Pi_1(w))\iota_1$. This shows that $\iota_1$ is a homomorphism of partial groups. We regard now $\L_1\iota_1$ as a partial group with product $\Pi|_{\D'}$ where $\D'=\D\cap\W(\L_1\iota_1)$. The properties we proved so far imply that $\D_1\iota_1^*\subseteq\D'$ and that the map $\L_1\rightarrow\L_1\iota_1$ induced by $\iota_1$ is a homomorphism of partial groups. Clearly, $\iota_1$ is injective, so it remains to prove that $\D'\subseteq \D_1\iota_1^*$. Let $u\in \D'$. As $\D'\subseteq\W(\L_1\iota_1)$, $u$ is of the form $u=(f_1\iota_1,\dots,f_n\iota_1)=((f_1,\One),\dots,(f_n,\One))$ with $f_1,\dots,f_n\in\L_1$. Set $v:=(f_1,\dots,f_n)$. As $u\in\D$, we have $v=u_1\in\D_1$. Moreover, $u=(f_1\iota_1,\dots,f_n\iota_1)=v\iota_1^*$. Hence, $u\in\D_1\iota_1^*$. This proves $\D'\subseteq\D_1\iota_1^*$ and completes the proof of (b).

\smallskip

Let $\H$ be a partial subgroup of $\L_i$ for some $i=1,2$. 
By Remark~\ref{IsomorphismOfPartialGroups}, an isomorphism of partial groups maps partial subgroups to partial subgroups. So by (b), $\H\iota_i$ is a partial subgroup of $\L_i\iota_i$. By (a), $\L_i\iota_i$ is a partial subgroup of $\L$. A partial subgroup of a partial subgroup is a partial subgroup again by \cite[Lemma~1.8(a)]{Chermak:2015}. Hence, $\H\iota_i$ is a partial subgroup of $\L$. This proves (c). 

\end{proof}

\begin{remark}\label{ConjugateDirectProduct}
Let $f_1\in\L_1$ and $f_2\in\L_2$ and set $f=(f_1,f_2)$.
\begin{itemize}
 \item [(a)] We have $\D(f)=\D_1(f_1)\times\D_2(f_2)$, where $\D_i(f_i)$ is formed inside of $\L_i$ for $i=1,2$, and $\D(f)$ is formed inside of $\L=\L_1\times\L_2$.
 \item [(b)] If $g_i\in\D_i(f_i)$ for $i=1,2$, then $(g_1^{f_1},g_2^{f_2})=(g_1,g_2)^f$. Similarly, given 
 $P_i\subseteq \D_i(f_i)$ for $i=1,2$, we have $(P_1\times P_2)^f=P_1^{f_1}\times P_2^{f_2}$.
 \item [(c)] Let $S_i\subseteq\L_i$ for $i=1,2$ and $S=S_1\times S_2\subseteq\L$.\footnote{By $S_1\times S_2$ we just mean the set theoretic product of $S_1$ and $S_2$ here} Then $S_f=(S_1)_{f_1}\times (S_2)_{f_2}$, where $(S_i)_{f_i}$ is formed inside of $\L_i$ for $i=1,2$, and $S_f$ is formed inside of $\L$.
\end{itemize}
\end{remark}

\begin{proof}
 Let $g=(g_1,g_2)\in\L$ with $g_i\in\L_i$ for $i=1,2$. We have $g\in\D(f)$ if and only if $v=((f_1^{-1},f_2^{-1}),(g_1,g_2),(f_1,f_2))=(f^{-1},g^{-1},f)\in\D$. By definition of $\D$, this is the case if and only if $(f_i^{-1},g_i,f_i)=v_i\in\D_i$ for $i=1,2$, i.e. if and only if $g_i\in\D_i(f_i)$ for $i=1,2$. This shows $\D(f)=\D_1(f_1)\times\D_2(f_2)$ proving (a). Moreover, $g^f=\Pi(v)=(\Pi_1(v_1),\Pi_2(v_2))=(\Pi_1(f_1^{-1},g_1,f_1),\Pi_2(f_2^{-1},g_2,f_2))=(g_1^{f_1},g_2^{f_2})$. This implies (b). 

\smallskip

Let now $s=(s_1,s_2)\in S_1\times S_2$ with $s_i\in S_i$ for $i=1,2$. Then $s\in S_f$ if and only if $s\in\D(f)$ and $s^f\in S$. By (a) and (b), the latter condition is true if and only if $s_i\in\D_i(f_i)$ for $i=1,2$ and $(s_1^{f_1},s_2^{f_2})=s^f\in S=S_1\times S_2$. This is the case if and only if $s_i\in\D_i(f_i)$ and $s_i^{f_i}\in S_i$ for $i=1,2$, i.e. if and only if $s_i\in (S_i)_{f_i}$ for $i=1,2$. This shows $S_f=(S_1)_{f_1}\times (S_2)_{f_2}$. 
\end{proof}

\begin{lemma}\label{DirectProductCentre}
We have
 \[Z(\L_1\times\L_2)=Z(\L_1)\times Z(\L_2)=Z(\L_1\iota_1)Z(\L_2\iota_2).\]
\end{lemma}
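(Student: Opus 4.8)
The plan is to establish the two equalities separately, in each case by directly unwinding the definitions with the help of Remarks~\ref{ConjugateDirectProduct} and~\ref{IotaRemark}; all three displayed sets will be shown to equal $\{(z_1,z_2)\colon z_i\in Z(\L_i)\text{ for }i=1,2\}$.

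For $Z(\L_1\times\L_2)=Z(\L_1)\times Z(\L_2)$ I would fix $f=(f_1,f_2)\in\L$ and prove that $f\in Z(\L)$ if and only if $f_i\in Z(\L_i)$ for $i=1,2$. Suppose first $f\in Z(\L)$ and let $g_1\in\L_1$; applying the defining property of $Z(\L)=C_\L(\L)$ to the element $g:=(g_1,\One)$ gives $g\in\D(f)$ and $g^f=g$. By Remark~\ref{ConjugateDirectProduct}(a), $\D(f)=\D_1(f_1)\times\D_2(f_2)$, so $g\in\D(f)$ forces $g_1\in\D_1(f_1)$; by Remark~\ref{ConjugateDirectProduct}(b) together with $\One^{f_2}=\One$, the equation $g^f=g$ reads $(g_1^{f_1},\One)=(g_1,\One)$, hence $g_1^{f_1}=g_1$. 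As $g_1\in\L_1$ was arbitrary, $f_1\in C_{\L_1}(\L_1)=Z(\L_1)$, and symmetrically $f_2\in Z(\L_2)$. Conversely, if $f_i\in Z(\L_i)$ for $i=1,2$ and $g=(g_1,g_2)\in\L$ is arbitrary, then $g_i\in\D_i(f_i)$ and $g_i^{f_i}=g_i$ for each $i$, so $g\in\D_1(f_1)\times\D_2(f_2)=\D(f)$ and $g^f=(g_1^{f_1},g_2^{f_2})=g$ by Remark~\ref{ConjugateDirectProduct}; thus $f\in C_\L(g)$ for every $g\in\L$, i.e. $f\in Z(\L)$.

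For $Z(\L_1)\times Z(\L_2)=Z(\L_1\iota_1)Z(\L_2\iota_2)$ I would first record that an isomorphism of partial groups carries the centre onto the centre: if $\phi\colon\L\to\L'$ is such an isomorphism, then $\phi$ is bijective, respects inversion (\cite[Lemma~1.13]{Chermak:2015}) and satisfies $v\in\D\Leftrightarrow v\phi^*\in\D'$ with $\Pi'(v\phi^*)=(\Pi(v))\phi$, so by Lemma~\ref{Centralizers} the quadruple $(f^{-1},g^{-1},f,g)$ witnesses $f\in C_\L(g)$ if and only if its image under $\phi^*$ witnesses $f\phi\in C_{\L'}(g\phi)$; since $\phi$ is onto, this gives $Z(\L)\phi=Z(\L')$. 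Applying this to the isomorphism $\iota_i\colon\L_i\to\L_i\iota_i$ from Lemma~\ref{DirectProductsLocalitiesInclusions}(b) yields $Z(\L_i\iota_i)=Z(\L_i)\iota_i$, i.e. $Z(\L_1\iota_1)=\{(z,\One)\colon z\in Z(\L_1)\}$ and $Z(\L_2\iota_2)=\{(\One,z)\colon z\in Z(\L_2)\}$. By the definition of the product of two subsets of a partial group, $Z(\L_1\iota_1)Z(\L_2\iota_2)$ is the set of all $\Pi(z_1\iota_1,z_2\iota_2)$ with $z_i\in Z(\L_i)$ and $(z_1\iota_1,z_2\iota_2)\in\D$; but Remark~\ref{IotaRemark} shows the pair always lies in $\D$ and that $\Pi(z_1\iota_1,z_2\iota_2)=(z_1,z_2)$, so this set is exactly $\{(z_1,z_2)\colon z_i\in Z(\L_i)\}=Z(\L_1)\times Z(\L_2)$.

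The argument is entirely routine; the only point not literally contained in the excerpt is the observation that isomorphisms of partial groups preserve centres, but this is an immediate consequence of Lemma~\ref{Centralizers} and the definition of an isomorphism of partial groups, so I do not expect any genuine obstacle.
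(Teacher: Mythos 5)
Your proof is correct and follows essentially the same route as the paper: the first equality is obtained exactly as in the paper's proof via Remark~\ref{ConjugateDirectProduct}(a),(b), and the second via Remark~\ref{IotaRemark} together with the identification $Z(\L_i\iota_i)=Z(\L_i)\iota_i$. The only difference is that you spell out (correctly, via Lemma~\ref{Centralizers}) the fact that an isomorphism of partial groups preserves centres, a point the paper leaves implicit.
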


\begin{proof}
Recall $\L=\L_1\times\L_2$. By Remark~\ref{IotaRemark}, it is sufficient to show that $Z(\L)=Z(\L_1)\times Z(\L_2)$. Let $f=(f_1,f_2)\in\L$. Then $f\in Z(\L)$ if and only if $f\in\D(g)$ and $f^g=f$ for all $g\in \L$. By Lemma~\ref{ConjugateDirectProduct}(a),(b), this is the case if and only if $f\in\D(g_1)\times\D(g_2)$ and $(f_1^{g_1},f_2^{g_2})=f^g=f$ for all $g=(g_1,g_2)\in\L$. This is equivalent to $f_i\in\D(g_i)$ and $f_i^{g_i}=f_i$ for all $i=1,2$ and all $g_i\in\L_i$. Hence, $f\in Z(\L)$ if and only if $f_i\in Z(\L_i)$ for $i=1,2$. This implies the assertion.
\end{proof}

\section{External direct and central products of localities}\label{SectionDirectProductExternalLocalities}

For $i=1,2$ let $(\L_i,\Delta_i,S_i)$ be a locality. As in the previous section, $\L_i$ is here a partial group with product $\Pi_i\colon \D_i\rightarrow \L_i$ and inversion map $\L_i\rightarrow \L_i,f\mapsto f^{-1}$. Let $\L=\L_1\times \L_2$ be the partial group we constructed in the previous section, and let $\pi_i\colon\L\rightarrow\L_i$ be the projection map for $i=1,2$. Recall that, by Lemma~\ref{DirectSubgroups}, $P_1\times P_2$ is a subgroup of $\L$ for all $P_1\in\Delta_1$ and $P_2$ in $\Delta_2$. Set $S:=S_1\times S_2$ and let 
\[\Delta=\Delta_1*\Delta_2\]
be the set of subgroups of $S$ containing a subgroup of the form $P_1\times P_2$ with $P_i\in\Delta_i$ for $i=1,2$. We will show that $(\L,\Delta,S)$ is a locality.

\begin{lemma}\label{DirectProductIsLocality}
The triple $(\L,\Delta,S)=(\L_1\times\L_2,\Delta_1*\Delta_2,S_1\times S_2)$ is a locality. Moreover, we have $\F_S(\L)=\F_{S_1}(\L_1)\times \F_{S_2}(\L_2)$.
\end{lemma}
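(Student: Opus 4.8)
The plan is to verify the three locality axioms (L1), (L2), (L3) for the triple $(\L,\Delta,S)$ and then to identify the associated fusion system. First I would observe that $S = S_1 \times S_2$ is genuinely a subgroup of $\L$ by Lemma~\ref{DirectSubgroups}, and that it is a $p$-subgroup since each $S_i$ is; moreover $\Delta$ is a non-empty set of subgroups of $S$ by construction (non-emptiness comes from $S_1 \in \Delta_1$, $S_2 \in \Delta_2$ since each $\Delta_i$ is non-empty). Finiteness of $\L$ as a set is clear from finiteness of $\L_1$ and $\L_2$. For (L1), I would show $S$ is maximal among $p$-subgroups of $\L$: if $S \leq T$ with $T$ a $p$-subgroup of $\L$, then by Lemma~\ref{DirectProductsLocalitiesProjections} each $T\pi_i$ is a $p$-subgroup of $\L_i$ containing $S_i$, hence equals $S_i$ by (L1) for $(\L_i,\Delta_i,S_i)$; since $T \subseteq T\pi_1 \times T\pi_2 = S_1 \times S_2 = S$, we get $T = S$.

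The main work is axiom (L2), i.e. $\D = \D_\Delta$. For the inclusion $\D_\Delta \subseteq \D$: if $v = (f^{(1)},\dots,f^{(n)}) \in \D_\Delta$ via subgroups $R_0,\dots,R_n \in \Delta$, then each $R_j$ contains some $P_{0,j}\times P_{1,j}$ with $P_{i,j} \in \Delta_i$; I would extract from the conjugation relations $R_{j-1} \subseteq \D(f^{(j)})$, $R_{j-1}^{f^{(j)}} = R_j$ — using Remark~\ref{ConjugateDirectProduct} to split these componentwise — enough information to show $v_i \in \D_{i,\Delta_i}$, hence $v_i \in \D_i$ by (L2) for $\L_i$, hence $v \in \D$ by definition of $\D$. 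Some care is needed here because a subgroup of $S = S_1\times S_2$ need not be a direct product of subgroups of $S_1$ and $S_2$; the point is that (L3) for $\L_i$ lets us pass from the projections $R_{j-1}\pi_i$ (which may be too small) or from the $P_{i,j}$ (which sit inside) to genuine $\Delta_i$-members witnessing membership in $\D_{i,\Delta_i}$. For the reverse inclusion $\D \subseteq \D_\Delta$: given $v \in \D$, so $v_i \in \D_i = \D_{i,\Delta_i}$, pick witnessing chains $P_{i,0},\dots,P_{i,n} \in \Delta_i$ for each $i$, and then $P_{1,0}\times P_{2,0},\dots,P_{1,n}\times P_{2,n}$ is a chain in $\Delta$ witnessing $v \in \D_\Delta$, again via Remark~\ref{ConjugateDirectProduct}(b).

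For (L3): suppose $Q \leq S$ and there exist $P \in \Delta$, $g \in \L$ with $P \subseteq \D(g)$ and $P^g \leq Q$. Then $P$ contains some $P_1\times P_2$ with $P_i \in \Delta_i$, and writing $g = (g_1,g_2)$, Remark~\ref{ConjugateDirectProduct} gives $P_i \subseteq \D_i(g_i)$ and $(P_1\times P_2)^g = P_1^{g_1}\times P_2^{g_2} \leq P^g \leq Q$; projecting via $\pi_i$, $P_i^{g_i} \leq Q\pi_i \leq S_i$, so by (L3) for $\L_i$ we get $Q\pi_i \in \Delta_i$, whence $Q \subseteq Q\pi_1 \times Q\pi_2$ with $Q\pi_1\times Q\pi_2 \in \{R_1\times R_2 : R_i\in\Delta_i\}$, so $Q \in \Delta = \Delta_1 * \Delta_2$ by definition of the $*$-operation as an ``overgroup closure.'' Finally, for the fusion system identity $\F_S(\L) = \F_{S_1}(\L_1)\times\F_{S_2}(\L_2)$: I would show both fusion systems are generated by the same conjugation maps. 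A conjugation map $c_g \colon P \to P^g$ in $\F_S(\L)$ with $g = (g_1,g_2)$, $P = P_1\times P_2 \subseteq S_g$, decomposes as $c_{g_1}\times c_{g_2}$ by Remark~\ref{ConjugateDirectProduct}(b), which is a generator of $\F_{S_1}(\L_1)\times\F_{S_2}(\L_2)$; conversely $\phi_1\times\phi_2$ with $\phi_i = c_{g_i}$ a conjugation map in $\F_{S_i}(\L_i)$ is realized by $c_{(g_1,g_2)}$ in $\L$. The expected main obstacle is the bookkeeping in (L2), specifically handling the fact that witnessing subgroups in $\Delta$ are only required to \emph{contain} a direct product, not to \emph{be} one — this is where one must invoke (L3) for the factor localities repeatedly and argue carefully that the chains of subgroups can be chosen compatibly with the componentwise structure.
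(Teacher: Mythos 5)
Your overall strategy---verifying (L1)--(L3) componentwise via Remark~\ref{ConjugateDirectProduct} and then matching generating conjugation maps of the fusion systems---is the same as the paper's, but your verification of (L3) contains a genuine error. From $P_i^{g_i}\leq Q\pi_i\leq S_i$ you correctly deduce $Q\pi_i\in\Delta_i$, but you then conclude $Q\in\Delta$ on the grounds that $Q\subseteq Q\pi_1\times Q\pi_2$. That containment points the wrong way: $\Delta=\Delta_1*\Delta_2$ is the set of subgroups of $S$ which \emph{contain} some $R_1\times R_2$ with $R_i\in\Delta_i$, and it is not closed under passing to subgroups (a diagonal subgroup of $S_1\times S_2$ projects onto both factors yet typically contains no such product), so ``overgroup closure'' cannot be invoked here. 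The correct conclusion uses data you already wrote down: $(P_1\times P_2)^g=P_1^{g_1}\times P_2^{g_2}\leq P^g\leq Q\leq S$, so $P_i\subseteq (S_i)_{g_i}$, whence $P_i^{g_i}\in\Delta_i$ by Lemma~\ref{LocalitiesProp}(b); thus $Q$ \emph{contains} a product of objects and lies in $\Delta$ by definition. This is exactly how the paper argues, and the fix is one line, but as written your deduction fails.

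Two further points. In (L2), for $\D_\Delta\subseteq\D$ you hedge between two routes without committing, and the hedge that the projections ``may be too small'' is backwards: each $R_j\in\Delta$ contains some $P_1\times P_2$ with $P_i\in\Delta_i$, so $R_j\pi_i\in\Delta_i$ by closure of $\Delta_i$ under overgroups in $S_i$ (apply (L3) in $\L_i$ with $g=\One$), and writing $h_j=(f_j,g_j)$ for the $j$-th entry of $v$, Remark~\ref{ConjugateDirectProduct} gives $R_{j-1}\pi_1\subseteq\D_1(f_j)$ and $(R_{j-1}\pi_1)^{f_j}=R_j\pi_1$; hence the chain of projections $R_0\pi_i,\dots,R_n\pi_i$ is itself a valid witnessing chain for $v_i\in\D_{\Delta_i}=\D_i$. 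This is a perfectly good alternative to the paper's argument, which instead fixes one product $P_0\times Q_0\leq R_0$ and conjugates it along the word using Lemma~\ref{LocalitiesProp}(b); either works, but one must actually commit to one of them, since products chosen independently inside each $R_j$ need not be related by conjugation. Finally, for $\F_{S_1}(\L_1)\times\F_{S_2}(\L_2)\subseteq\F_S(\L)$ you only realize maps of the form $c_{g_1}\times c_{g_2}$; an arbitrary generator $\phi_1\times\phi_2$ must still be reduced to these, e.g.\ by writing each $\phi_i$ as a composite of conjugation maps and padding with conjugation by $\One$ so the two composites have equal length (the paper's step), or by factoring $\phi_1\times\phi_2$ as $(\phi_1\times\id)\circ(\id\times\phi_2)$ first.
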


\begin{proof}
In this proof, Remark~\ref{ConjugateDirectProduct} is used frequently, most of the time without reference. We first show that $(\L,\Delta,S)$ is a locality. As $\L_1$ and $\L_2$ are finite as sets, $\L$ is clearly also finite as a set. By Lemma~\ref{DirectSubgroups}, $S=S_1\times S_2$ is a subgroup of $\L$ which, regarded as a binary group, coincides with the direct product of the groups $S_1$ and $S_2$. So $S_1\times S_2$ is a $p$-subgroup of $\L$. Let now $T$ be a $p$-subgroup of $\L$ containing $S=S_1\times S_2$. Then by Lemma~\ref{DirectProductsLocalitiesProjections}, $T\pi_i$ is a $p$-subgroup of $\L_i$ for $i=1,2$. As $S_i\leq T\pi_i$ and $S_i$ is a maximal $p$-subgroup of $\L_i$ for $i=1,2$, it follows $T\pi_i=S_i$ and thus $T=S_1\times S_2$. This shows that $S=S_1\times S_2$ is a maximal $p$-subgroup of $\L$, so (L1) holds.

\smallskip

Let $v=((f_1,g_1),(f_2,g_2),\dots,(f_n,g_n))\in\W(\L)$ (where $f_j\in\L_1$ and $g_j\in\L_2$ for $j=1,\dots,n$). Recall that $\D_i=\D_{\Delta_i}$ for $i=1,2$ (where $\D_{\Delta_i}$ is formed inside of $\L_i$), since $(\L_i,\Delta_i,S_i)$ is a locality. Using this property and Remark~\ref{ConjugateDirectProduct}, we get the following equivalence:
\begin{eqnarray*}
 & & v\in\D\\
&\Longleftrightarrow& v_1=(f_1,\dots,f_n)\in\D_1\mbox{ and }v_2=(g_1,\dots,g_n)\in\D_2\\
&\Longleftrightarrow& \mbox{There exist }P_0,\dots,P_n\in\Delta_1\mbox{ and }Q_0,\dots,Q_n\in\Delta_2\mbox{ such that}\\
& & \mbox{for }j=1,\dots,n,\;P_{j-1}\subseteq S_{f_j},\;P_{j-1}^{f_j}=P_j,\;Q_{j-1}\subseteq S_{g_j},\;Q_{j-1}^{g_j}=Q_j,\\
&\Longleftrightarrow& \mbox{There exist }P_0,\dots,P_n\in\Delta_1\mbox{ and }Q_0,\dots,Q_n\in\Delta_2\mbox{ such that}\\
& & \mbox{for }j=1,\dots,n,\;(P_{j-1}\times Q_{j-1})\subseteq S_{(f_j,g_j)}\mbox{ and }(P_{j-1}\times Q_{j-1})^{(f_j,g_j)}=P_j\times Q_j\\
&\Longleftrightarrow& \mbox{There exist }P_0,\dots,P_n\in\Delta_1\mbox{ and }Q_0,\dots,Q_n\in\Delta_2\mbox{ such that }\\
& & v\in\D_{\Delta}\mbox{ via }P_0\times Q_0,\dots,P_n\times Q_n.
\end{eqnarray*}
In particular, $v\in\D$ implies $v\in\D_{\Delta}$. Suppose now $v\in\D_{\Delta}$. Then there exist $X_0,\dots,X_n\in\Delta$ such that $v\in\D_{\Delta}$ via $X_0,\dots,X_n$. By definition of $\Delta$, there exist $P_0\in\Delta_1$ and $Q_0\in\Delta_2$ such that $P_0\times Q_0\leq X_0$. Define $P_j$ and $Q_j$ recursively by $P_j=P_{j-1}^{f_j}$ and $Q_j=Q_{j-1}^{g_j}$. As $X_{j-1}\subseteq S_{(f_j,g_j)}=(S_1)_{f_j}\times (S_2)_{g_j}$ and $X_{j-1}^{(f_j,g_j)}=X_j$, an easy induction argument shows that, for $j=1,\dots,n$, $P_j$ and $Q_j$ are well-defined and $P_j\times Q_j=(P_{j-1}\times Q_{j-1})^{(f_j,g_j)}\leq X_j$. As $\L_1$ and $\L_2$ are localities, $P_j\in\Delta_1$ and $Q_j\in\Delta_2$ for $j=1,\dots,n$ by Lemma~\ref{LocalitiesProp}(b). In particular, $P_j\times Q_j\in\Delta$ for $j=1,\dots,n$. Hence, $v\in\D_{\Delta}$ via $P_0\times Q_0,\dots,P_n\times Q_n$. By the above equivalence, this means $v\in\D$.  Thus, we have shown that $\D=\D_\Delta$, i.e. (L2) holds. 

\smallskip

It remains to prove (L3). Let $X\in\Delta$ and $g=(g_1,g_2)\in\L$ such that $X\subseteq S_g$. Let $X^g\leq Y\leq S$. We need to show that $Y\in\Delta$. As $X\in\Delta$, there exist $P_i\in\Delta_i$ for $i=1,2$ such that $P_1\times P_2\leq X$. Then $P_1\times P_2\subseteq S_g=(S_1)_{g_1}\times (S_2)_{g_2}$ and thus $P_i\leq (S_i)_{g_i}$ for $i=1,2$. Thus, as $\L_i$ is a locality, we have $P_i^{g_i}\in\Delta_i$ for $i=1,2$ by Lemma~\ref{LocalitiesProp}(b). As $P_1^{g_1}\times P_2^{g_2}=(P_1\times P_2)^g\subseteq X^g\leq Y\leq S$, it follows now from the definition of $\Delta$ that $Y\in\Delta$. This shows (L3) and completes the proof that $(\L,\Delta,S)$ is a locality.

\smallskip

Set $\F_i:=\F_{S_i}(\L_i)$ for $i=1,2$. It remains to show that $\F_S(\L)=\F_1\times \F_2$. The fusion system $\F_S(\L)$ is generated by the maps $c_g\colon S_g\rightarrow S$ with $g\in\L$. Take $g=(g_1,g_2)\in\L$. We have $S_g=(S_1)_{g_1}\times (S_2)_{g_2}$. Moreover, for any $s=(s_1,s_2)\in S_g$,  $sc_g=s^g=(s_1^{g_1},s_2^{g_2})=(s_1c_{g_1},s_2c_{g_2})$. So using the notation from Section~\ref{DirectProductFusionSystemsSection}, we have $c_g=c_{g_1}\times c_{g_2}\in\Hom_{\F_1\times \F_2}(S_g,S)$. This shows $\F_S(\L)\subseteq\F_1\times \F_2$. The fusion system $\F_1\times \F_2$ is generated by maps $\phi_1\times\phi_2$ where $\phi_i$ is a morphism in $\F_i$. So let $P_i,Q_i\leq S_i$ and $\phi_i\in\Hom_{\F_i}(P_i,Q_i)$ for $i=1,2$. We need to show that $\phi_1\times \phi_2\colon P_1\times P_2\rightarrow Q_1\times Q_2$ is a morphism in $\F_S(\L)$. By definition of $\F_1$, there exist elements $f_1,\dots,f_n\in\L_1$ and subgroups $P_1=X_0,\dots,X_n$ of $S$ such that, for $j=1,\dots,n$, $X_{j-1}^{f_j}=X_j$ and $\phi_1=c_{f_1}|_{X_0}\circ \dots \circ c_{f_n}|_{X_{n-1}}$. Similarly, by definition of $\F_2$, there exist elements $g_1,\dots,g_m\in\L_2$ and subgroups $P_2=Y_0,\dots,Y_m$ of $S$ such that, for $j=1,\dots,m$, $Y_{j-1}^{f_j}=Y_j$ and $\phi_2=c_{g_1}|_{Y_0}\circ \dots \circ c_{g_m}|_{Y_{m-1}}$. Inserting conjugation maps by the identity element if necessary, we may assume $m=n$. Then setting $Z_j=X_j\times Y_j$ for $j=0,\dots,n$ and $h_j=(f_j,g_j)$ for $j=1,\dots,n$, we have $Z_{j-1}^{h_j}=Z_j$ for $j=1,\dots,n$, and $\phi_1\times \phi_2=c_{h_1}|_{Z_0}\circ c_{h_2}|_{Z_1}\circ \dots\circ c_{h_n}|_{Z_n}$. So $\phi_1\times\phi_2$ is a morphism in $\F_S(\L)$. This shows $\F_S(\L)=\F_1\times\F_2$ as required and completes the proof.
\end{proof}

\begin{definition}
 We call $(\L,\Delta,S)=(\L_1\times\L_2,\Delta_1*\Delta_2,S_1\times S_2)$ the \textit{(external) direct product} of $(\L_1,\Delta_1,S_1)$ and $(\L_2,\Delta_2,S_2)$.
\end{definition}

As before let $\iota_i\colon\L_i\rightarrow\L$ be the inclusion map.

\begin{lemma}\label{DirectProductLiSublocality}
 For $i=1,2$, $(\L_i\iota_i,\Delta_i\iota_i,S_i\iota_i)$ is a sublocality of $(\L,\Delta,S)$ and $\F_{S_i\iota_i}(\L_i\iota_i)$ is the canonical image of $\F_{S_i}(\L_i)$ in $\F_{S_1}(\L_1)\times\F_{S_2}(\L_2)=\F_S(\L)$.
\end{lemma}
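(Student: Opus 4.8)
The plan is to verify the two assertions of Lemma~\ref{DirectProductLiSublocality} separately, in each case reducing to results already established. First I would show that $(\L_i\iota_i,\Delta_i\iota_i,S_i\iota_i)$ is a sublocality of $(\L,\Delta,S)$. Recall from Lemma~\ref{DirectProductsLocalitiesInclusions}(b) that $\iota_i\colon\L_i\to\L$ is an injective homomorphism of partial groups inducing an isomorphism of partial groups $\L_i\to\L_i\iota_i$; in particular $\L_i\iota_i$ is a partial subgroup of $\L$. By Remark~\ref{IsomorphismOfPartialGroups}, this isomorphism carries the partial subgroups $P\in\Delta_i$ to the subgroups $P\iota_i$ of $\L_i\iota_i$; since each $P\iota_i\leq S_i\iota_i$, the set $\Delta_i\iota_i$ is a non-empty set of subgroups of $S_i\iota_i$. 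By the criterion for sublocalities stated in the paper (following the definition of sublocality), it remains to check three things: that $S_i\iota_i=S\cap\L_i\iota_i$; that $\D\cap\W(\L_i\iota_i)=\D_{\Delta_i\iota_i}\cap\W(\L_i\iota_i)$; and that $\Delta_i\iota_i$ is closed under $\L_i\iota_i$-conjugation and overgroups in $S_i\iota_i$, together with $S_i\iota_i$ being a maximal $p$-subgroup of $\L_i\iota_i$.

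For $S_i\iota_i=S\cap\L_i\iota_i$: say $i=1$. An element of $\L_1\iota_1$ has the form $(f,\One)$; it lies in $S=S_1\times S_2$ iff $f\in S_1$, i.e. iff it lies in $S_1\iota_1$. Maximality of $S_i\iota_i$ among $p$-subgroups of $\L_i\iota_i$, as well as closure of $\Delta_i\iota_i$ under $\L_i\iota_i$-conjugation and overgroups in $S_i\iota_i$, all transport across the locality isomorphism $\L_i\to\L_i\iota_i$ from the corresponding facts for $(\L_i,\Delta_i,S_i)$; here one uses that $\iota_i$ carries $\D_i$-words to $\D$-words compatibly (Lemma~\ref{DirectProductsLocalitiesInclusions}(b) shows $\D_i\iota_i^*=\D'$ where $\D'=\D\cap\W(\L_i\iota_i)$), which also gives the equality $\D\cap\W(\L_i\iota_i)=\D_{\Delta_i\iota_i}\cap\W(\L_i\iota_i)$ once one knows $(\L_i,\Delta_i,S_i)$ is a locality. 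A clean way to package all of this is to invoke Lemma~\ref{SublocalityUnderPartialHom} with the homomorphism $\iota_i\colon\L_i\to\L$ applied to the (trivial) sublocality $(\L_i,\Delta_i,S_i)$ of itself: the hypotheses $S_i\iota_i\subseteq S$ and $(\D_i\cap\W(\L_i))\iota_i^*=\D\cap\W(\L_i\iota_i)$ are exactly what Lemma~\ref{DirectProductsLocalitiesInclusions}(b) supplies, and then the conclusion gives both that $(\L_i\iota_i,\Delta_i\iota_i,S_i\iota_i)$ is a sublocality and that $\iota_i$ is a projection (in fact isomorphism) of localities $(\L_i,\Delta_i,S_i)\to(\L_i\iota_i,\Delta_i\iota_i,S_i\iota_i)$.

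For the fusion-system statement: by Lemma~\ref{DirectProductIsLocality} we have $\F_S(\L)=\F_{S_1}(\L_1)\times\F_{S_2}(\L_2)$. Since $\iota_i$ is an isomorphism of localities from $(\L_i,\Delta_i,S_i)$ to $(\L_i\iota_i,\Delta_i\iota_i,S_i\iota_i)$ with $\iota_i|_{S_i}\colon S_i\to S_i\iota_i$, it follows (e.g. by \cite[Theorem~5.7(b)]{Henke:2015}, or directly from the fact that $\F_{S_i\iota_i}(\L_i\iota_i)$ is generated by the conjugation maps $c_{g\iota_i}$ which correspond under $\iota_i$ to the $c_g$, $g\in\L_i$) that $\iota_i|_{S_i}$ induces an isomorphism from $\F_{S_i}(\L_i)$ onto $\F_{S_i\iota_i}(\L_i\iota_i)$, so $\F_{S_i\iota_i}(\L_i\iota_i)=\F_{S_i}(\L_i)\iota_i=\hat\F_i$, the canonical image of $\F_{S_i}(\L_i)$ in $\F_{S_1}(\L_1)\times\F_{S_2}(\L_2)$ as defined in Section~\ref{DirectProductFusionSystemsSection}. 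The one thing to check is that the ambient inclusion $S_i\iota_i\hookrightarrow S$ agrees with the map $\iota_i\colon S_i\to S$ from Section~\ref{DirectProductFusionSystemsSection} used to define $\hat\F_i$, which is immediate since both send $s\in S_1$ to $(s,\One)$ (resp. $s\in S_2$ to $(\One,s)$).

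I do not expect a serious obstacle here: the statement is essentially a bookkeeping lemma. The only mildly delicate point is keeping straight that the abstract isomorphism of localities $\iota_i$ and the concrete subset inclusion $\L_i\iota_i\subseteq\L$ are being used consistently — i.e., that the product on $\L_i\iota_i$ as a partial subgroup of $\L$ is the same as the one transported from $\L_i$ via $\iota_i$ — but this is exactly the content of Lemma~\ref{DirectProductsLocalitiesInclusions}(b), so it is already available.
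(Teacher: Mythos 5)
Your proposal is correct and follows essentially the same route as the paper: the sublocality claim is obtained by feeding Lemma~\ref{DirectProductsLocalitiesInclusions}(b) (i.e.\ $S_i\iota_i\subseteq S$ and $\D_i\iota_i^*=\D\cap\W(\L_i\iota_i)$) into Lemma~\ref{SublocalityUnderPartialHom} applied to $(\L_i,\Delta_i,S_i)$ as a sublocality of itself, and the fusion-system claim via \cite[Theorem~5.7(b)]{Henke:2015} together with the observation that $\iota_i|_{S_i}$ is the canonical inclusion $S_i\rightarrow S_1\times S_2$ defining $\hat\F_i$. The extra hands-on verifications in your first two paragraphs are harmless but redundant once Lemma~\ref{SublocalityUnderPartialHom} is invoked.
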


\begin{proof}
Recall that, by Lemma~\ref{DirectProductIsLocality}, $\F_S(\L)=\F_{S_1}(\L_1)\times \F_{S_2}(\L_2)$. Let $i\in\{1,2\}$. Set $\F_i=\F_{S_i}(\L_i)$, $\hat{\L}_i=\L_i\iota_i$, $\hat{\Delta}_i=\Delta_i\iota_i$ and $\hat{S}_i=S_i\iota_i$. Lemma~\ref{DirectProductsLocalitiesInclusions}(b) gives that $\iota_i$ is a homomorphism of partial groups and $\D_i\iota_i^*=\D\cap \W(\hat{\L}_i)$. Moreover, $S_i\iota_i\subseteq S_1\times S_2=S$. Since $(\L_i,\Delta_i,S_i)$ is a sublocality of itself, it follows from Lemma~\ref{SublocalityUnderPartialHom} that $(\L_i\iota_i,\Delta_i\iota_i,S_i\iota_i)$ is a sublocality of $(\L,\Delta,S)$. Moreover, the restriction of $\iota_i$ to a map $\L_i\rightarrow \L_i\iota_i$ is a projection of localities from $(\L_i,\Delta_i,S_i)$ to $(\L_i\iota_i,\Delta_i\iota_i,S_i\iota_i)$. Because of the latter property, it follows from \cite[Theorem~5.7(b)]{Henke:2015} that $(\iota_i)|_{S_i}\colon S_i\rightarrow S_i\iota_i$ induces an epimorphism from $\F_{S_i}(\L_i)$ to $\F_{S_i\iota_i}(\L_i\iota_i)$, i.e. $\F_{S_i}(\L_i)(\iota_i)|_{S_i}= \F_{S_i\iota_i}(\L_i\iota_i)$. Note that $(\iota_i)|_{S_i}$ is the canonical inclusion map $S_i\rightarrow S_1\times S_2$ which induces a morphism of fusion systems from $\F_{S_i}(\L_i)$ to $\F_{S_1}(\L_1)\times \F_{S_2}(\L_2)=\F_S(\L)$. The canonical isomorphic image of $\F_{S_i}(\L_i)$ in $\F_{S_1}(\L_1)\times \F_{S_2}(\L_2)$ is by definition the image under this morphism and equals thus $\F_{S_i}(\L_i)(\iota_i)|_{S_i}=\F_{S_i\iota_i}(\L_i\iota_i)$. This shows the assertion.
\end{proof}

Our next goal now will be to show that $(\L,\Delta,S)$ is objective characteristic $p$ if and only if $(\L_i,\Delta_i,S_i)$ is of objective characteristic $p$ for each $i=1,2$. We will need the following elementary group theoretical lemma:

\begin{lemma} \label{GroupsDirectProductCharp}
 Let $G_1$ and $G_2$ be finite groups. Then $G_1\times G_2$ is of characteristic $p$ if and only if $G_1$ and $G_2$ are of characteristic $p$.
\end{lemma}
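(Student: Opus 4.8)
The plan is to prove both directions by unwinding the definition of characteristic $p$, namely that a finite group $G$ is of characteristic $p$ if and only if $C_G(O_p(G))\leq O_p(G)$, together with the standard identity $O_p(G_1\times G_2)=O_p(G_1)\times O_p(G_2)$ (which is, e.g., recorded in \cite{Andersen/Oliver/Ventura:2012} and was already used in the proof of Lemma~\ref{DirectProductFusionSystems}). Write $P_i:=O_p(G_i)$, so that $O_p(G_1\times G_2)=P_1\times P_2$. The other elementary fact I would use is that centralizers in a direct product decompose: for a subgroup $H_1\times H_2\leq G_1\times G_2$ we have $C_{G_1\times G_2}(H_1\times H_2)=C_{G_1}(H_1)\times C_{G_2}(H_2)$.

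First I would prove the ``only if'' direction: assume $G_1\times G_2$ is of characteristic $p$, i.e.\ $C_{G_1\times G_2}(P_1\times P_2)\leq P_1\times P_2$. Using the centralizer decomposition, $C_{G_1}(P_1)\times C_{G_2}(P_2)\leq P_1\times P_2$. Projecting to the first factor (or simply intersecting with $G_1\times\{1\}$) gives $C_{G_1}(P_1)\times\{1\}\leq P_1\times\{1\}$, hence $C_{G_1}(P_1)\leq P_1=O_p(G_1)$, so $G_1$ is of characteristic $p$; the same argument with the second factor handles $G_2$.

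Conversely, assume $G_1$ and $G_2$ are each of characteristic $p$, i.e.\ $C_{G_i}(P_i)\leq P_i$ for $i=1,2$. Then
\[
C_{G_1\times G_2}(P_1\times P_2)=C_{G_1}(P_1)\times C_{G_2}(P_2)\leq P_1\times P_2=O_p(G_1\times G_2),
\]
so $G_1\times G_2$ is of characteristic $p$. That completes the proof.

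I do not expect a genuine obstacle here: the statement is purely a matter of assembling two well-known facts about direct products (the behaviour of $O_p$ and of centralizers). The only point requiring a word of care is the justification of $O_p(G_1\times G_2)=O_p(G_1)\times O_p(G_2)$, but this is standard finite group theory and is already invoked earlier in the paper, so I would simply cite it (or remark that $O_p(G_1)\times O_p(G_2)$ is a normal $p$-subgroup of $G_1\times G_2$, while any normal $p$-subgroup of $G_1\times G_2$ projects into $O_p(G_i)$ in each coordinate, giving the reverse containment).
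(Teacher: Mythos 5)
Your proof is correct and follows essentially the same route as the paper: both rest on $O_p(G_1\times G_2)=O_p(G_1)\times O_p(G_2)$ together with the decomposition of centralizers in a direct product, and your converse direction is word-for-word the paper's computation. The only cosmetic difference is in the forward direction, where the paper embeds $C_{G_i}(O_p(G_i))$ directly into $C_G(O_p(G))\cap G_i$ instead of first writing out the full centralizer decomposition and then intersecting with a factor; the content is identical.
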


\begin{proof}
Set $G:=G_1\times G_2$ and observe that $O_p(G)=O_p(G_1)\times O_p(G_2)$. If $G$ has characteristic $p$ then $C_{G_i}(O_p(G_i))\leq C_G(O_p(G))\cap G_i\leq O_p(G)\cap G_i=O_p(G_i)$ and $G_i$ has characteristic $p$ for $i=1,2$. If $G_1$ and $G_2$ have characteristic $p$ then $C_G(O_p(G))=C_G(O_p(G_1)\times O_p(G_2))=C_{G_1}(O_p(G_1))\times C_{G_2}(O_p(G_2))\leq O_p(G_1)\times O_p(G_2)=O_p(G)$. Hence, $G$ is of characteristic $p$.
\end{proof}

\begin{lemma}\label{DirectProductsLocalitiesNormalizers}
 As before let $(\L,\Delta,S)$ be the external direct product of $(\L_1,\Delta_1,S_1)$ and $(\L_2,\Delta_2,S_2)$. Let $P\leq S$ and set $P_i:=P\pi_i$. Then the following hold: 
\begin{itemize}
\item [(a)] We have $N_\L(P)\subseteq N_{\L_1}(P_1)\times N_{\L_2}(P_2)$.
\item [(b)] If $P=P_1\times P_2$ then  $N_\L(P)=N_{\L_1}(P_1)\times N_{\L_2}(P_2)$ (as a set).
\item [(c)] Suppose $P\in\Delta$. Then $P_i\in\Delta_i$ for $i=1,2$ and in particular $P_1\times P_2\in\Delta$. If the groups $N_{\L_1}(P_1)$ and $N_{\L_2}(P_2)$ are of characteristic $p$ then $N_\L(P)$ is of characteristic $p$. 
\item [(d)] If $P=P_1\times P_2\in\Delta$ then $N_\L(P)$ is of characteristic $p$ if and only if $N_{\L_i}(P_i)$ is of characteristic $p$ for $i=1,2$. 
\end{itemize}
\end{lemma}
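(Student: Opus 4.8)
The strategy is to reduce everything to the group‑theoretic Lemma~\ref{GroupsDirectProductCharp}, using the projection maps $\pi_i$ and the description of conjugation in the direct product from Remark~\ref{ConjugateDirectProduct}. For (a), I would take $g=(g_1,g_2)\in N_\L(P)$. Then $P\subseteq\D(g)$, so by Remark~\ref{ConjugateDirectProduct}(a) each coordinate projection $P_i=P\pi_i$ satisfies $P_i\subseteq\D_i(g_i)$; and from $P^g=P$ one gets, applying $\pi_i$ and using that $\pi_i$ is a homomorphism of partial groups (Lemma~\ref{DirectProductsLocalitiesProjections}) together with Remark~\ref{ConjugateDirectProduct}(b), that $P_i^{g_i}=(P^g)\pi_i=P\pi_i=P_i$. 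Hence $g_i\in N_{\L_i}(P_i)$ and $g\in N_{\L_1}(P_1)\times N_{\L_2}(P_2)$.

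For (b), assume $P=P_1\times P_2$. The inclusion $\subseteq$ is (a). Conversely, if $g_i\in N_{\L_i}(P_i)$, then $g_i$ lies in the group $N_{\L_i}(P_i)$ (which is a subgroup of $\L_i$ by Lemma~\ref{LocalitiesProp}(a) once we know $P_i\in\Delta_i$ — but actually for this direction I only need $P_i\subseteq S_{i,g_i}$ with $P_i^{g_i}=P_i$, which holds by hypothesis). By Remark~\ref{ConjugateDirectProduct}(a),(b), $g=(g_1,g_2)\in\D(P\text{-conjugation})$ and $(P_1\times P_2)^g=P_1^{g_1}\times P_2^{g_2}=P_1\times P_2=P$, so $g\in N_\L(P)$. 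This gives set‑equality; and in fact, once we know $P\in\Delta$, $N_\L(P)$ is a subgroup of $\L$ by Lemma~\ref{LocalitiesProp}(a), so by Lemma~\ref{DirectSubgroups} the equality $N_\L(P)=N_{\L_1}(P_1)\times N_{\L_2}(P_2)$ is an equality of (binary) groups.

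For (c), suppose $P\in\Delta$. By definition of $\Delta=\Delta_1*\Delta_2$ there are $Q_i\in\Delta_i$ with $Q_1\times Q_2\leq P$. Applying $\pi_i$ and using that $Q_i\leq P\pi_i=P_i$, closure of $\Delta_i$ under overgroups in $S_i$ (axiom (L3)) gives $P_i\in\Delta_i$, hence $P_1\times P_2\in\Delta$. Now, if $P=P_1\times P_2$ we are done by (b): $N_\L(P)$ is the binary direct product $N_{\L_1}(P_1)\times N_{\L_2}(P_2)$, so characteristic $p$ of the two factors gives characteristic $p$ of the product by Lemma~\ref{GroupsDirectProductCharp}. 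For general $P\in\Delta$ with projections $P_i$, the point is that $N_\L(P)\leq N_{\L_1}(P_1)\times N_{\L_2}(P_2)=N_\L(P_1\times P_2)$ by (a) and (b); one then needs that a subgroup of a group of characteristic $p$ which contains $O_p$ of that group is again of characteristic $p$ — so I must check $O_p(N_\L(P_1\times P_2))\leq N_\L(P)$. The inclusion $P\leq P_1\times P_2$ shows $P$ is normalized by $O_p(N_\L(P_1\times P_2))$ is \emph{not} automatic; rather, the cleanest route is: $S_{P}:=N_S(P)$... Here the natural argument is that $O_p(N_\L(P_1\times P_2))\leq S$ (it is a $p$-subgroup normalized by $S\cap N_\L(P_1\times P_2)\geq N_S(P_1\times P_2)$, which is large) — this is the standard fact for localities that $O_p$ of a normalizer-subgroup lies in $S$ — and it normalizes $P_1\times P_2$; but to see it normalizes $P$ one uses that $P$ is the "characteristic core" determined by its projections.

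\emph{Main obstacle.} The genuinely delicate step is part (c) for $P$ \emph{not} of the form $P_1\times P_2$: passing characteristic $p$ from $N_\L(P_1\times P_2)$ down to its subgroup $N_\L(P)$. I expect the intended argument is to invoke the general locality fact (from \cite{Chermak:2015} or \cite{Henke:2015}) that for a subgroup $P\in\Delta$, $N_\L(P)$ is a finite group with $O_p(N_\L(P))=N_S(P)$-... more precisely that $O_p(N_\L(P))$ contains $P$ and is contained in any overgroup's normalizer chain, so that $N_\L(P)$ being of characteristic $p$ is equivalent to $N_\L(R)$ being of characteristic $p$ for $R=P_1\times P_2$ the smallest product-type subgroup with $P\leq R$ and $P,R$ having the same projections; combined with $N_{\L}(P)\unlhd$-behaviour this reduces (c) to (d). Part (d) is then immediate: combine (b), which identifies $N_\L(P)$ with the binary direct product, with Lemma~\ref{GroupsDirectProductCharp}.
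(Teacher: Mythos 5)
Parts (a), (b) and (d) of your proposal follow essentially the paper's own argument and are fine. The problem is part (c) for a general $P\in\Delta$ not of the form $P_1\times P_2$, which you yourself label the ``main obstacle'' and leave unresolved. The route you sketch --- passing characteristic $p$ from $G:=N_\L(P_1\times P_2)$ down to its subgroup $N_\L(P)$ via the criterion that a subgroup of a characteristic-$p$ group containing its $O_p$ is again of characteristic $p$ --- requires $O_p(G)\leq N_\L(P)$, and there is no reason for $O_p(G)$ to normalize an arbitrary $P$ having the prescribed projections; your subsequent remarks about $O_p(N_\L(P))=N_S(P)$ and a ``characteristic core determined by its projections'' are unsubstantiated and do not lead anywhere. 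So as written the proof of (c) has a genuine gap.

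The missing idea is simpler. By (a) and (b), $N_\L(P)\subseteq G=N_{\L_1}(P_1)\times N_{\L_2}(P_2)=N_\L(P_1\times P_2)$, and trivially $N_G(P)\subseteq N_\L(P)$, so $N_\L(P)=N_G(P)$. Here $G$ is a finite group by Lemma~\ref{LocalitiesProp}(a), and by Lemma~\ref{DirectSubgroups} it coincides, as a binary group, with the direct product of $N_{\L_1}(P_1)$ and $N_{\L_2}(P_2)$; hence under the hypothesis of (c) it is of characteristic $p$ by Lemma~\ref{GroupsDirectProductCharp}. Since $P$ is a $p$-subgroup of $G$, the group $N_\L(P)=N_G(P)$ is a $p$-local subgroup of $G$, and the paper concludes by the standard fact, \cite[Lemma~1.2(c)]{MS:2012b}, that every $p$-local subgroup of a group of characteristic $p$ is again of characteristic $p$. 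No containment of $O_p(G)$ in $N_\L(P)$ is needed, and no reduction of (c) to (d) beyond the equality $N_\L(P)=N_G(P)$ is required.
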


\begin{proof}
Let $f=(f_1,f_2)\in\L$ with $f_i\in\L_i$ for $i=1,2$. For the proof of (a), suppose  $f\in N_\L(P)$. Let $x_1\in P_1$. Since $P_1$ is the projection of $P$ to $S_1$, there exists $x_2\in P_2$ such that $x=(x_1,x_2)\in P$. Using Remark~\ref{ConjugateDirectProduct}(a),(b), we get $x_i\in\D(f_i)$ for $i=1,2$ and $(x_1^{f_1},x_2^{f_2})=x^f\in P$ as $f\in N_\L(P)$. Hence, $x_1^{f_1}=x^f\pi_1\in P_1$ proving $f_1\in N_{\L_1}(P_1)$. Similarly, one shows $f_2\in N_{\L_2}(P_2)$. So if $f\in N_\L(P)$ then $f=(f_1,f_2)\in N_{\L_1}(P_1)\times N_{\L_2}(P_2)$. This proves (a). 

\smallskip

For the proof of (b) assume $P=P_1\times P_2$ and $f_i\in N_{\L_i}(P_i)$ for $i=1,2$. By (a), it remains to prove that $f\in N_\L(P)$. By Remark~\ref{ConjugateDirectProduct}(a),(b), $P=P_1\times P_1\subseteq\D_1(f_1)\times \D_2(f_2)=\D(f)$ and $P^f=P_1^{f_1}\times P_2^{f_2}=P_1\times P_2=P$. So $f\in N_\L(P)$ as required. This proves (b).

\smallskip

Let now $P\in\Delta$ be arbitrary. Then, by definition of $\Delta$, there exist $Q_i\in\Delta_i$ for $i=1,2$ such that $Q_1\times Q_2\leq P$. Then for $i=1,2$, we have $Q_i\leq P_i$ and thus $P_i\in\Delta_i$, as $\Delta_i$ is closed under taking overgroups in $S_i$. By definition of $\Delta$, it follows $P_1\times P_2\in\Delta$.

\smallskip

By (a) and (b), $H:=N_\L(P)\subseteq G=N_\L(P_1\times P_2)=N_{\L_1}(P_1)\times N_{\L_2}(P_2)$. Since the normalizer of an object in a locality is a finite group by Lemma~\ref{LocalitiesProp}(a), $H$, $G$, $N_{\L_1}(P_1)$ and $N_{\L_2}(P_2)$ are finite groups. By Lemma~\ref{DirectSubgroups}, $G$ regarded as a binary group coincides with the direct product of the binary groups $N_{\L_1}(P_1)$ and $N_{\L_2}(P_2)$. 

\smallskip

If $N_{\L_1}(P_1)$ and $N_{\L_2}(P_2)$ are of characteristic $p$ then $G$ is of characteristic $p$ by Lemma~\ref{GroupsDirectProductCharp}. By \cite[Lemma~1.2(c)]{MS:2012b}, every $p$-local subgroup of a group of characteristic $p$ is of characteristic $p$. Hence, $H=N_G(P)$ is of characteristic $p$ if $G$ is of characteristic $p$. This proves (d). Suppose now $P=P_1\times P_2$. Then $G=H$ and thus (e) follows from Lemma~\ref{GroupsDirectProductCharp}. 
\end{proof}

\begin{lemma}\label{DirectProductObjectiveCharp}
 The locality $(\L,\Delta,S)=(\L_1\times\L_2,\Delta_1*\Delta_2,S_1\times S_2)$ is of objective characteristic $p$ if and only if $(\L_i,\Delta_i,S_i)$ is of objective characteristic $p$ for each $i=1,2$.
\end{lemma}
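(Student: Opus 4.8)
The plan is to obtain both implications directly from Lemma~\ref{DirectProductsLocalitiesNormalizers}, specifically parts (c) and (d), combined with the fact that, as part of the data of a locality, each $\Delta_i$ is non-empty.

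For the ``if'' direction, I would assume that $(\L_i,\Delta_i,S_i)$ is of objective characteristic $p$ for $i=1,2$, take an arbitrary $P\in\Delta$, and set $P_i:=P\pi_i$. By Lemma~\ref{DirectProductsLocalitiesNormalizers}(c) we have $P_i\in\Delta_i$ for $i=1,2$, so by hypothesis $N_{\L_i}(P_i)$ is of characteristic $p$ for $i=1,2$; the second assertion of Lemma~\ref{DirectProductsLocalitiesNormalizers}(c) then yields that $N_\L(P)$ is of characteristic $p$. As $P\in\Delta$ was arbitrary, $(\L,\Delta,S)$ is of objective characteristic $p$.

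For the ``only if'' direction, I would assume $(\L,\Delta,S)$ is of objective characteristic $p$ and fix $i\in\{1,2\}$, say $i=1$ (the case $i=2$ being symmetric). Take an arbitrary $P_1\in\Delta_1$. Since $(\L_2,\Delta_2,S_2)$ is a locality, $\Delta_2\neq\emptyset$, so one may choose some $P_2\in\Delta_2$. Then $P_1\times P_2\in\Delta_1*\Delta_2=\Delta$ by the definition of $\Delta$, and since $(\L,\Delta,S)$ is of objective characteristic $p$, the group $N_\L(P_1\times P_2)$ is of characteristic $p$. Applying Lemma~\ref{DirectProductsLocalitiesNormalizers}(d) with $P=P_1\times P_2$ (so that $P\pi_i=P_i$) then gives that $N_{\L_1}(P_1)$ is of characteristic $p$. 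As $P_1\in\Delta_1$ was arbitrary, $(\L_1,\Delta_1,S_1)$ is of objective characteristic $p$.

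I do not expect any genuine obstacle, since the substantive content — that $O_p$ and the characteristic-$p$ property behave well with respect to direct products, and that $p$-local subgroups of characteristic-$p$ groups are again of characteristic $p$ — has already been packaged into Lemma~\ref{GroupsDirectProductCharp} and Lemma~\ref{DirectProductsLocalitiesNormalizers}. The only point requiring a little care is the use of non-emptiness of $\Delta_2$ (resp.\ $\Delta_1$): this is what allows one to produce an element of $\Delta$ of the special product shape $P_1\times P_2$, which is precisely the hypothesis under which Lemma~\ref{DirectProductsLocalitiesNormalizers}(d) lets one descend from a statement about $N_\L$ back to statements about the two factor localities.
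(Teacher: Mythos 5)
Your proposal is correct and follows essentially the same route as the paper: the forward direction is exactly the paper's application of Lemma~\ref{DirectProductsLocalitiesNormalizers}(c) to an arbitrary $P\in\Delta$, and the converse is the paper's application of Lemma~\ref{DirectProductsLocalitiesNormalizers}(d) to $P=P_1\times P_2$. Your explicit remark about using non-emptiness of the other object set to produce an element of $\Delta$ of product shape is a point the paper leaves implicit, but it is the same argument.
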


\begin{proof}
If $(\L_i,\Delta_i,S_i)$ is of objective characteristic $p$ for $i=1,2$, then it follows from Lemma~\ref{DirectProductsLocalitiesNormalizers}(c) that $N_\L(P)$ is of characteristic $p$ for any $P\in\Delta$. So $(\L,\Delta,S)$ is of objective characteristic $p$ if  $(\L_i,\Delta_i,S_i)$ is of objective characteristic $p$ for $i=1,2$. Suppose now $(\L,\Delta,S)$ is of objective characteristic $p$. Let $P_i\in\Delta_i$ for $i=1,2$. We need to see that $N_{\L_i}(P_i)$ is of characteristic $p$ for $i=1,2$. Setting $P=P_1\times P_2$ this follows from Lemma~\ref{DirectProductsLocalitiesNormalizers}(d).  
\end{proof}

\begin{lemma}\label{DirectProductLinkingLocality}
 The locality $(\L,\Delta,S)=(\L_1\times\L_2,\Delta_1*\Delta_2,S_1\times S_2)$ is a linking locality if and only if $(\L_i,\Delta_i,S_i)$ is a linking locality for each $i=1,2$.
\end{lemma}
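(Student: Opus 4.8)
The plan is to read the statement off the two conditions defining a linking locality. Recall that a locality is a linking locality exactly when the ``cr''-subgroups of its fusion system all lie in its object set and the locality is of objective characteristic $p$. By Lemma~\ref{DirectProductObjectiveCharp}, the locality $(\L,\Delta,S)$ is of objective characteristic $p$ if and only if each $(\L_i,\Delta_i,S_i)$ is; so, writing $\F_i:=\F_{S_i}(\L_i)$, it suffices to prove
\[\F_S(\L)^{cr}\subseteq\Delta\iff\bigl(\F_i^{cr}\subseteq\Delta_i\text{ for }i=1,2\bigr).\]
The inputs I would use are $\F_S(\L)=\F_1\times\F_2$ (Lemma~\ref{DirectProductIsLocality}) and $\F_S(\L)^{cr}=\{R_1\times R_2\colon R_i\in\F_i^{cr}\}$ (Lemma~\ref{DirectProductFusionSystems}(c)), together with the defining description of $\Delta=\Delta_1*\Delta_2$ as the set of subgroups of $S=S_1\times S_2$ containing some $P_1\times P_2$ with $P_i\in\Delta_i$, and the fact that each $\Delta_i$ is closed under taking overgroups in $S_i$.

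For the implication ``$\Leftarrow$'' (each $(\L_i,\Delta_i,S_i)$ a linking locality $\Rightarrow$ $(\L,\Delta,S)$ a linking locality): assuming $\F_i^{cr}\subseteq\Delta_i$ for $i=1,2$, every $R\in\F_S(\L)^{cr}$ is of the form $R_1\times R_2$ with $R_i\in\F_i^{cr}\subseteq\Delta_i$ by Lemma~\ref{DirectProductFusionSystems}(c), so $R$ itself has the shape $P_1\times P_2$ with $P_i\in\Delta_i$ and hence lies in $\Delta$; together with Lemma~\ref{DirectProductObjectiveCharp} this gives that $(\L,\Delta,S)$ is a linking locality.

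For the implication ``$\Rightarrow$'': assume $\F_S(\L)^{cr}\subseteq\Delta$, fix $i\in\{1,2\}$, and take $R_i\in\F_i^{cr}$; I want $R_i\in\Delta_i$. Here I would pick any $R_{3-i}\in\F_{3-i}^{cr}$, which is possible because $S_{3-i}\in\F_{3-i}^{cr}$: indeed $S_{3-i}$ is $\F_{3-i}$-centric since $C_{S_{3-i}}(S_{3-i})=Z(S_{3-i})\leq S_{3-i}$, and $\F_{3-i}$-radical since $\Aut_{S_{3-i}}(S_{3-i})=\Inn(S_{3-i})$ is a Sylow $p$-subgroup of $\Aut_{\F_{3-i}}(S_{3-i})=N_{\L_{3-i}}(S_{3-i})/C_{\L_{3-i}}(S_{3-i})$, forcing $O_p(\Aut_{\F_{3-i}}(S_{3-i}))\leq\Inn(S_{3-i})$. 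Then $R_1\times R_2\in\F_S(\L)^{cr}\subseteq\Delta$ by Lemma~\ref{DirectProductFusionSystems}(c), so by the definition of $\Delta=\Delta_1*\Delta_2$ there are $P_j\in\Delta_j$ ($j=1,2$) with $P_1\times P_2\leq R_1\times R_2$; comparing $i$-th coordinates gives $P_i\leq R_i$, and overgroup-closure of $\Delta_i$ in $S_i$ yields $R_i\in\Delta_i$. As $i$ was arbitrary, $\F_i^{cr}\subseteq\Delta_i$ for $i=1,2$, and with Lemma~\ref{DirectProductObjectiveCharp} each $(\L_i,\Delta_i,S_i)$ is a linking locality.

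I do not expect a genuine obstacle: the substantive content is already packaged in Lemma~\ref{DirectProductObjectiveCharp} (behaviour of objective characteristic $p$) and Lemma~\ref{DirectProductFusionSystems}(c) (the shape of the ``cr''-subgroups of a direct product), so the argument is essentially bookkeeping with the definition of $\Delta_1*\Delta_2$. The only point that needs a word of justification is that $\F_i^{cr}\neq\emptyset$ — equivalently $S_i\in\F_i^{cr}$ — which supplies the partner subgroup $R_{3-i}$ in the ``$\Rightarrow$'' direction; alternatively one may simply take $R_{3-i}=S_{3-i}$ throughout.
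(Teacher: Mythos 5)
Your proof is correct and follows essentially the same route as the paper's: reduce to the object-set condition via Lemma~\ref{DirectProductObjectiveCharp}, identify $\F_S(\L)^{cr}$ using Lemma~\ref{DirectProductIsLocality} and Lemma~\ref{DirectProductFusionSystems}(c), and then use overgroup-closure of the $\Delta_i$. The only cosmetic differences are that you unfold the definition of $\Delta_1*\Delta_2$ directly where the paper cites Lemma~\ref{DirectProductsLocalitiesNormalizers}(c) (whose proof is exactly your bookkeeping step), and your aside checking $S_{3-i}\in\F_{3-i}^{cr}$ makes explicit a point the paper leaves implicit when it picks $R_i\in\F_i^{cr}$ for both $i$ simultaneously.
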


\begin{proof}
Set $\F=\F_S(\L)$ and $\F_i=\F_{S_i}(\L_i)$ for $i=1,2$. By Lemma~\ref{DirectProductObjectiveCharp}, it is sufficient to show that $\F^{cr}\subseteq\Delta$ if and only if $\F_i^{cr}\subseteq\Delta_i$. Recall that by Lemma~\ref{DirectProductIsLocality}, $\F=\F_1\times\F_2$. So by Lemma~\ref{DirectProductFusionSystems}(c), $\F^{cr}=\{R_1\times R_2\colon R_i\in\F_i^{cr}\mbox{ for }i=1,2\}$. In particular, clearly $\F^{cr}\subseteq\Delta$ if $\F_i^{cr}\subseteq \Delta_i$. Assume now $\F^{cr}\subseteq\Delta$. Let $R_i\in\F_i^{cr}$ for $i=1,2$. Then $R_1\times R_2\in\F^{cr}\subseteq\Delta$. Then by Lemma~\ref{DirectProductsLocalitiesNormalizers}, $R_i\in\Delta_i$ for $i=1,2$. This shows $\F_i^{cr}\subseteq\Delta_i$ for $i=1,2$ provided $\F^{cr}\subseteq\Delta$. Hence, the proof is complete.
\end{proof}

Let $\iota_i\colon \L_i\rightarrow \L$ be the inclusion map for $i=1,2$. Recall from Lemma~\ref{DirectProductCentre} that $Z(\L)=Z(\L_1)\times Z(\L_2)=Z(\L_1\iota_1)Z(\L_2\iota_2)$. Observe also that every subgroup of $Z(\L)$ is a partial normal subgroup of $\L$.

\begin{definition}
Recall that we assume throughout $(\L,\Delta,S)=(\L_1\times\L_2,\Delta_1*\Delta_2,S_1\times S_2)$. Let $Z\leq Z(\L)$ with $Z\cap (\L_i\iota_i)=\{\One\}$ for $i=1,2$. Let $\beta\colon \L\rightarrow\L/Z$ be the canonical  projection map as defined in Subsection~\ref{SubsectionLocalitiesProjections}. Then we call the locality $(\L/Z,\Delta\beta,S\beta)$ the \textit{(external) central product} of the localities $(\L_1,\Delta_1,S_1)$ and $(\L_2,\Delta_2,S_2)$ over $Z$.   
\end{definition}

The reader should note that it is not so clear how one should define external central products of arbitrary partial groups since quotients of partial groups modulo partial normal subgroups are not defined in general.

\begin{lemma}\label{ExternalCentralProductLemma}
 Let $Z\leq Z(\L)$ with $Z\cap (\L_i\iota_i)=\{\One\}$ and let $\beta\colon \L\rightarrow\L/Z$ be the canonical projection so that $(\L/Z,\Delta\beta,S\beta)$ is the external central product of the localities $(\L_1,\Delta_1,S_1)$ and $(\L_2,\Delta_2,S_2)$ over $Z$. Set $\F_i=\F_{S_i}(\L_i)$ for $i=1,2$.
\begin{itemize}
\item [(a)] The localities $(\L_1,\Delta_1,S_1)$ and $(\L_2,\Delta_2,S_2)$ are of objective characteristic $p$ if and only if $Z\leq S$ and the central product $(\L/Z,\Delta\beta,S\beta)$ is of objective characteristic $p$.
\item [(b)] The localities $(\L_1,\Delta_1,S_1)$ and $(\L_2,\Delta_2,S_2)$ are linking localities if and only if $Z\leq S$ and the central product $(\L/Z,\Delta\beta,S\beta)$ is a linking locality. 
\item [(c)] If $Z\leq S$ then $Z\leq Z(\F_1\times\F_2)$, $(\F_1\times \F_2)/Z$ is a central product of the fusion systems $\F_1$ and $\F_2$, and $(\L/Z,\Delta\beta,S\beta)$ is a locality over $(\F_1\times\F_2)/Z$.
\end{itemize}
\end{lemma}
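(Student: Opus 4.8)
The plan is to treat the three parts in the order (a), (b), (c). Parts (a) and (b) I would first reduce, via the direct‑product results of the previous section, to statements purely about the locality $(\L,\Delta,S)$ and its central quotient by $Z\subseteq Z(\L)$, and then prove those. Throughout I write $\F=\F_S(\L)$ and recall from Lemma~\ref{DirectProductIsLocality} that $\F=\F_1\times\F_2$, from Lemma~\ref{DirectProductCentre} that $Z(\L)=Z(\L_1)\times Z(\L_2)$, and that $\beta$ is a projection of localities behaving well on partial subgroups because $\ker\beta=Z\subseteq Z(\L)$ (Lemma~\ref{ModCentral1}, Lemma~\ref{LocalitiesProjectionsModCentral}).

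For part (a): by Lemma~\ref{DirectProductObjectiveCharp} it is enough to show that $(\L,\Delta,S)$ is of objective characteristic $p$ if and only if $Z\leq S$ and $(\L/Z,\Delta\beta,S\beta)$ is of objective characteristic $p$. I would assemble this from three ingredients. First, a group‑theoretic fact: if $G$ is a finite group and $Z\leq Z(G)$ a $p$‑subgroup then $O_p(G/Z)=O_p(G)/Z$, and $G$ is of characteristic $p$ if and only if $G/Z$ is; the implication ``$G/Z$ of characteristic $p$ $\Rightarrow$ $G$ of characteristic $p$'' is formal, and the converse follows by a coprime action argument showing the preimage of $C_{G/Z}(O_p(G/Z))$ consists of $p$‑elements, hence is a normal $p$‑subgroup. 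Second, if $(\L,\Delta,S)$ is of objective characteristic $p$ then $S\in\Delta$ (apply (L3) with $g=\One$ to any member of the non‑empty set $\Delta$), so $N_\L(S)$ is a finite group of characteristic $p$ with $O_p(N_\L(S))=S$, whence $Z\leq Z(\L)=C_\L(\L)\subseteq C_\L(S)=C_{N_\L(S)}(S)\leq S$. Third, for $Z\leq S$ and $P\in\Delta$ with $Z\leq P$ one checks, using Lemma~\ref{ModCentral1} and the injectivity of the partial conjugation maps on members of $\Delta$ (\cite[Lemma~2.3(b)]{Chermak:2015}), that $N_{\L/Z}(P\beta)=N_\L(P)\beta\cong N_\L(P)/Z$. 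Combining the third and first ingredients handles the forward direction, since every element of $\Delta\beta$ has a representative in $\Delta$ containing $Z$; for the reverse direction one moreover uses that $N_\L(P)\leq N_\L(PZ)$ (a routine consequence of $Z$ being central in $\L$), so that $N_\L(P)=N_{N_\L(PZ)}(P)$ is a $p$‑local subgroup of a group of characteristic $p$ and hence of characteristic $p$ by \cite[Lemma~1.2(c)]{MS:2012b}. I expect the interplay between ``characteristic $p$'' and passing to central $p$‑quotients — the group‑theoretic lemma together with the identification $N_{\L/Z}(P\beta)\cong N_\L(P)/Z$ — to be the main obstacle of the whole proof; the rest is bookkeeping.

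For part (b): by Lemma~\ref{DirectProductLinkingLocality} and part (a) it remains to compare the condition $\F^{cr}\subseteq\Delta$ with $\F_{S\beta}(\L/Z)^{cr}\subseteq\Delta\beta$. Assuming $Z\leq S$ one has $Z\leq Z(\F)$ (as in the proof of (c) below), so $Z\leq P$ for every $\F$‑centric $P$; then Lemma~\ref{CentralQuotient}(a), applied to the epimorphism from $\F=\F_1\times\F_2$ to $\F_{S\beta}(\L/Z)=(\F_1\times\F_2)/Z$ induced by $\beta|_S$ (whose kernel $Z$ is central in $\F$), yields a bijection $P\mapsto P\beta$ between $\F^{cr}$ and $\F_{S\beta}(\L/Z)^{cr}$, and since $\Delta$ and $\Delta\beta$ are closed under overgroups this gives $\F^{cr}\subseteq\Delta\iff\F_{S\beta}(\L/Z)^{cr}\subseteq\Delta\beta$. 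Putting this together with part (a) gives (b). (If one wishes to avoid any saturation hypothesis here, the same comparison can be run directly as in the proof of Lemma~\ref{CentralProductFusionSystems}(a) via Lemma~\ref{DirectProductFusionSystems}(c).)

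For part (c): assume $Z\leq S$. Since $Z\subseteq Z(\L)=C_\L(\L)$ and every morphism of $\F=\F_S(\L)$ is a composite of restrictions of conjugation maps $c_g$, each element of $Z$ is fixed by every morphism of $\F$ whose domain contains it, so $Z\leq Z(\F)=Z(\F_1\times\F_2)=Z(\F_1)\times Z(\F_2)$. As $Z\cap(S_i\iota_i)\leq Z\cap(\L_i\iota_i)=\{\One\}$ and $S_i\iota_i$ is the canonical image of $S_i$ in $S_1\times S_2$, the quotient $(\F_1\times\F_2)/Z$ is by definition the external central product of $\F_1$ and $\F_2$ over $Z$. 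Finally, $\beta$ is a projection of localities from $(\L,\Delta,S)$ to $(\L/Z,\Delta\beta,S\beta)$ with kernel $Z\leq S$, so by \cite[Theorem~5.7(b)]{Henke:2015} the map $\beta|_S$ induces an epimorphism from $\F=\F_1\times\F_2$ to $\F_{S\beta}(\L/Z)$ with kernel $Z$; identifying $S\beta$ with $(S_1\times S_2)/Z$ in the obvious way, the induced isomorphism $(\F_1\times\F_2)/Z\to\F_{S\beta}(\L/Z)$ is the identity, so $\F_{S\beta}(\L/Z)=(\F_1\times\F_2)/Z$ and $(\L/Z,\Delta\beta,S\beta)$ is a locality over $(\F_1\times\F_2)/Z$.
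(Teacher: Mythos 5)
Your parts (a) and (c) are correct, but note that they take a partly different route from the paper: for both the objective characteristic~$p$ statement and the identification of $\F_{S\beta}(\L/Z)$ the paper simply quotes \cite[Proposition~9.2]{Henke:2015}, which says that for $Z\leq Z(\L)\cap S$ the quotient $(\L/Z,\Delta\beta,S\beta)$ is a locality over $\F_S(\L)/Z$, is of objective characteristic $p$ if and only if $(\L,\Delta,S)$ is, and is a linking locality if and only if $(\L,\Delta,S)$ is; everything is then reduced to Lemmas~\ref{DirectProductObjectiveCharp} and \ref{DirectProductLinkingLocality} exactly as you do. Your proof of (a) instead reconstructs the relevant part of that proposition from scratch (the central $p$-quotient lemma for finite groups, $Z\leq C_\L(S)\leq S$ via $S\in\Delta$ and $O_p(N_\L(S))=S$, and $N_{\L/Z}(P\beta)\cong N_\L(P)/Z$ for $Z\leq P\in\Delta$, the last point indeed going through Lemma~\ref{ModCentral1} and the injectivity of the conjugation maps). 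This is workable and more self-contained, at the price of re-deriving a quoted result; your treatment of (c) is essentially the paper's, with \cite[Theorem~5.7(b)]{Henke:2015} plus the background fact on induced isomorphisms replacing the appeal to Proposition~9.2.

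Part (b), however, has a genuine gap. You compare $\F^{cr}$ with $\F_{S\beta}(\L/Z)^{cr}$ via Lemma~\ref{CentralQuotient}(a), but that lemma is stated only for \emph{saturated} fusion systems, and you never verify saturation. In the forward direction one could invoke the fact that linking localities have saturated fusion systems, but in the reverse direction --- where only $(\L/Z,\Delta\beta,S\beta)$ is assumed to be a linking locality and you must deduce $\F^{cr}\subseteq\Delta$ --- saturation of $\F=\F_1\times\F_2$ is precisely what is not available: getting it from the $(\L_i,\Delta_i,S_i)$ being linking localities is circular (that is the conclusion), and getting it from saturation of the quotient $(\F_1\times\F_2)/Z$ requires a nontrivial theorem on central extensions of saturated fusion systems that appears nowhere in this paper and is not supplied by you. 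Your parenthetical fallback does not repair this, since the proof of Lemma~\ref{CentralProductFusionSystems}(a) itself runs through Lemma~\ref{CentralQuotient}(a) and so carries the same saturation hypothesis. To close the gap you must either quote \cite[Proposition~9.2]{Henke:2015} for the linking-locality equivalence, as the paper does, or prove directly (without saturation hypotheses) the correspondence ``$P\in\F^{cr}$ if and only if $Z\leq P$ and $P\beta\in\F_{S\beta}(\L/Z)^{cr}$'', which is the content of the cited \cite[Lemma~9.1]{Henke:2015} rather than something that follows formally from what is in this paper.
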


\begin{proof}
If $(\L,\Delta,S)$ is of objective characteristic $p$ then $Z\leq C_\L(S)\leq S$. In particular, $Z\leq S$ if $(\L,\Delta,S)$ is a linking locality. Assume from now on that $Z\leq S$. Recall from Lemma~\ref{DirectProductIsLocality} that $\F_S(\L)=\F_1\times \F_2$. In particular, $Z\leq Z(\F_1\times\F_2)$ as $Z\leq Z(\L)$ and $\F_S(\L)$ is generated by the conjugation maps by elements of $\L$.

\smallskip

As $Z\leq Z(\L)\cap S$, \cite[Proposition~9.2]{Henke:2015} gives us the following properties: The locality $(\L/Z,\Delta\beta,S\beta)$ is a locality over $(\F_1\cap \F_2)/Z$; $(\L/Z,\Delta\beta,S\beta)$ is of objective characteristic $p$ if and only if $(\L,\Delta,S)$ is of objective characteristic $p$; and $(\L/Z,\Delta\beta,S\beta)$ is a linking locality if and only if $(\L,\Delta,S)$ is a linking locality. Now (a) and (b) follow from Lemma~\ref{DirectProductObjectiveCharp} and Lemma~\ref{DirectProductLinkingLocality}. We have $Z\cap (S_i\iota_i)\leq Z\cap Z(\L_i\iota_i)=\{\One\}$ and thus $Z\cap (S_i\iota_i)=1$. Hence, $(\F_1\times \F_2)/Z$ is a central product. So (c) holds.
\end{proof}

\section{Internal central and direct products}\label{SectionInternal}

Throughout this section let $\L$ be a partial group with product $\Pi\colon\D\rightarrow\L$. For $i=1,2$, $\L_i$ will always be a partial group with product $\Pi_i\colon\D_i\rightarrow \L_i$. Moreover  
\[\iota_i\colon \L_i\rightarrow \L_1\times \L_2\] 
denotes the inclusion map from $\L_i$ into the external direct product $\L_1\times \L_2$. For $i=1,2$ we set  $\hat{\L_i}:=\L_i\iota_i$, i.e.  $\hat{\L_1}=\{(f,\One)\colon f\in\L_1\}$ and $\hat{\L_2}=\{(\One,g)\colon g\in\L_2\}$. By Lemma~\ref{DirectProductsLocalitiesInclusions}(a),(b), $\hat{\L_1}$ and $\hat{\L_2}$ are partial normal subgroups of $\L_1\times\L_2$, and $\iota_i$ induces an isomorphism $\L_i\rightarrow \hat{\L_i}$. 

\smallskip

Except in Example~\ref{DirectProductPartialGroupExternalInternal} and Example~\ref{CentralProductExternalInternal}, $\L_1$ and $\L_2$ are assumed to be partial subgroups of $\L$, $\D_i:=\D\cap\W(\L_i)$ and $\Pi_i:=\Pi|_{\D_i}$.

\begin{definition}
 We say that $\L$ is the \textit{(internal) central product} of $\L_1$ and $\L_2$ if the following conditions hold:
\begin{itemize}
 \item [(C1)] We have 
\begin{eqnarray*}
\D=\{(\Pi(f_1,g_1),\dots,\Pi(f_n,g_n))\colon (f_1,\dots,f_n)\in\D\cap \W(\L_1),\;(g_1,\dots,g_n)\in\D\cap\W(\L_2),& &\\
(f_j,g_j)\in\D\mbox{ for }j=1,\dots,n\}&.&
\end{eqnarray*} 
 \item [(C2)] If $(f_1,\dots,f_n)\in\D\cap \W(\L_1)$, $(g_1,\dots,g_n)\in\D\cap\W(\L_2)$ and $(f_j,g_j)\in\D$ for $j=1,\dots,n$, then $\Pi(\Pi(f_1,g_1),\dots,\Pi(f_n,g_n))=\Pi(\Pi(f_1,\dots,f_n),\Pi(g_1,\dots,g_n))$. 
\end{itemize}
We call $\L$ the \textit{(internal) direct product} of $\L_1$ and $\L_2$ if $\L$ is the central product of $\L_1$ and $\L_2$ and the following additional property holds: 
\begin{itemize}
 \item [(D)] For any $h\in\L$ there exist unique elements $f\in\L_1$ and $g\in\L_2$ with $(f,g)\in\D$ and $h=\Pi(f,g)$. 
\end{itemize}
\end{definition}

\begin{remark}\label{CentralProductProduct}
If (C1) holds then $\L=\L_1\L_2$. In other words, for every $h\in\L$ there exist elements $f\in\L_1$ and $g\in\L_2$ with $(f,g)\in\D$ and $h=\Pi(f,g)$. So the important part in property (D) is the uniqueness of $f$ and $g$.
\end{remark}

\begin{proof}
Let $h\in\L$. Then $(h)\in\D$ by the axioms of a partial group. So by (C1), there exist $(f)\in\D\cap\W(\L_1)$ and $(g)\in\D\cap\W(\L_2)$ with $(f,g)\in\D$ and $(h)=(\Pi(f,g))$. Then $f\in\L_1$, $g\in\L_2$ and $h=\Pi(f,g)$.
\end{proof}

\begin{lemma}\label{CentralProductCentralizer}
 Suppose that $\L$ is the internal central product of $\L_1$ and $\L_2$. Then $\L_1\subseteq C_\L(\L_2)$ and $\L_2\subseteq C_\L(\L_1)$. In particular, for all $f\in\L_1$ and $g\in\L_2$, we have $(f,g)\in\D$, $(g,f)\in\D$ and $fg=gf$. Moreover, $\L$ is the internal central product of $\L_2$ and $\L_1$.
\end{lemma}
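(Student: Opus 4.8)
The plan is to show first that $\L_2\subseteq C_\L(\L_1)$; the reverse inclusion $\L_1\subseteq C_\L(\L_2)$ will follow by a symmetric argument (or, once we know the situation is symmetric, will be free). Fix $f\in\L_1$ and $g\in\L_2$. The goal is to verify condition (3) of Lemma~\ref{Centralizers} for the pair $(g,f)$, namely that $(g^{-1},f^{-1},g,f)\in\D$ and $g^{-1}f^{-1}gf=\One$; this will give $g\in C_\L(f)$, hence $f\in C_\L(g)$ by the equivalence in that lemma, and since $f,g$ were arbitrary this yields both $\L_2\subseteq C_\L(\L_1)$ and $\L_1\subseteq C_\L(\L_2)$.

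\textbf{Key steps.} First I would record the words I want to feed into (C1)/(C2). Consider the $\L_1$-word $u_1:=(f^{-1},\One,f,\One)$ and the $\L_2$-word $u_2:=(\One,g^{-1},\One,g)$. Both lie in $\D$: $u_1\in\D\cap\W(\L_1)$ because $(f^{-1},f)\in\D$ with product $\One$ and inserting $\One$'s is harmless by Remark~\ref{Ones} (and $\L_1$ is a partial subgroup, so $\W(\L_1)\cap\D$ is its domain), and similarly $u_2\in\D\cap\W(\L_2)$. The componentwise products are $\Pi(f^{-1},\One)=f^{-1}$, $\Pi(\One,g^{-1})=g^{-1}$, $\Pi(f,\One)=f$, $\Pi(\One,g)=g$, each of which is defined since one entry is $\One$ (third partial-group axiom plus Remark~\ref{Ones}). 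So the hypotheses of (C1)/(C2) are met with $n=4$. Condition (C1) then gives $(f^{-1},g^{-1},f,g)=(\Pi(f^{-1},\One),\Pi(\One,g^{-1}),\Pi(f,\One),\Pi(\One,g))\in\D$, and (C2) gives
\[
\Pi(f^{-1},g^{-1},f,g)=\Pi\bigl(\Pi(f^{-1},\One,f,\One),\Pi(\One,g^{-1},\One,g)\bigr)=\Pi(\Pi_1(u_1),\Pi_2(u_2))=\Pi(\One,\One)=\One,
\]
using $\Pi_1(u_1)=f^{-1}f=\One$ and $\Pi_2(u_2)=g^{-1}g=\One$ (each an instance of the partial-group axiom $\Pi(h^{-1},h)=\One$ together with Remark~\ref{Ones} to delete the interleaved $\One$'s). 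That is exactly condition (3) of Lemma~\ref{Centralizers} for the pair $(f,g)$, so $f\in C_\L(g)$, i.e. $g\in C_\L(f)$. As $f\in\L_1$ and $g\in\L_2$ were arbitrary we conclude $\L_2\subseteq C_\L(\L_1)$ and $\L_1\subseteq C_\L(\L_2)$. The ``in particular'' clause — $(f,g),(g,f)\in\D$ and $fg=gf$ — is then immediate from the last sentence of Lemma~\ref{Centralizers}.

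\textbf{Symmetry.} For the final assertion that $\L$ is also the internal central product of $\L_2$ and $\L_1$, I would check (C1) and (C2) with the roles of $\L_1,\L_2$ swapped. Given $(g_1,\dots,g_n)\in\D\cap\W(\L_2)$, $(f_1,\dots,f_n)\in\D\cap\W(\L_1)$ with $(g_j,f_j)\in\D$ for all $j$, note $(f_j,g_j)\in\D$ as well (both by the just-proved commuting property, or directly since $f_j\in C_\L(g_j)$), and $\Pi(g_j,f_j)=\Pi(f_j,g_j)$ by the ``moreover'' of Lemma~\ref{Centralizers}. Thus the defining set in (C1) is unchanged under the swap, giving (C1) for $(\L_2,\L_1)$; and (C2) for $(\L_2,\L_1)$ follows from (C2) for $(\L_1,\L_2)$ together with these identities $\Pi(g_j,f_j)=\Pi(f_j,g_j)$ and, for the right-hand side, $\Pi(\Pi(g_1,\dots,g_n),\Pi(f_1,\dots,f_n))=\Pi(\Pi(f_1,\dots,f_n),\Pi(g_1,\dots,g_n))$, which holds because $\Pi(f_1,\dots,f_n)\in\L_1\subseteq C_\L(\L_2)\ni\Pi(g_1,\dots,g_n)$ commute (invoking again Lemma~\ref{Centralizers}, or the already established $\L_1\subseteq C_\L(\L_2)$ since partial subgroups are closed under $\Pi$).

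\textbf{Main obstacle.} The only delicate point is bookkeeping with the $\One$-padding: one must be careful that the interleaved words $u_1,u_2$ genuinely lie in $\D$ and have the claimed componentwise products, which is where Remark~\ref{Ones} (insertion/deletion of $\One$'s preserves membership in $\D$ and the product) does all the work. None of this is deep; the argument is essentially a one-line application of (C1), (C2), and the characterization of centralizing pairs in Lemma~\ref{Centralizers}, so the write-up should be short.
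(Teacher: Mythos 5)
Your proof is correct and follows essentially the same route as the paper: both arguments apply (C1)/(C2) to $\One$-padded words from $\L_1$ and $\L_2$ and then invoke Lemma~\ref{Centralizers}, the only cosmetic difference being that the paper computes $g^f=g$ directly from the length-three words $(f^{-1},\One,f)$ and $(\One,g,\One)$, while you verify the commutator condition (3) with length-four words and then use the equivalence in that lemma. Your more explicit verification of (C1)/(C2) for the swapped pair $(\L_2,\L_1)$ is also fine; the paper simply asserts this from the commuting property.
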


\begin{proof}
 Let $f\in\L_1$ and $g\in\L_2$. We show first that $g\in\D(f)$ and $g^f=g$. By the axioms of a partial group, $(f^{-1},f)\in\D$, $(g)\in\D$, $\Pi(f^{-1},f)=\One$ and $\Pi(g)=g$. So by Remark~\ref{Ones}, $(f^{-1},\One,f)\in\D\cap\W(\L_1)$, $(\One,g,\One)\in\D\cap\W(\L_2)$, $\Pi(f^{-1},\One,f)=\Pi(f^{-1},f)=\One$ and $\Pi(\One,g,\One)=\Pi(g)=g$. A similar argument shows that $(f^{-1},\One)$, $(\One,g)$ and $(f,\One)$ lie in $\D$ and $\Pi(f^{-1},\One)=f^{-1}$, $\Pi(\One,g)=g$ and $\Pi(f,\One)=f$. So by (C1), $(f^{-1},g,f)=(\Pi(f^{-1},\One),\Pi(\One,g),\Pi(f,\One))\in\D$ and by (C2), $g^f=\Pi(f^{-1},g,f)=\Pi(\Pi(f^{-1},\One,f),\Pi(\One,g,\One))=\Pi(\One,g)=g$. This proves $\L_1\subseteq C_\L(\L_2)$. So by Lemma~\ref{Centralizers}, $\L_2\subseteq C_\L(\L_1)$ and, for all $f\in\L_1$ and all $g\in\L_2$, we have $(f,g)\in\D$, $(g,f)\in\D$ and $\Pi(f,g)=\Pi(g,f)$. It follows from the latter property and the definition of an internal central product that $\L$ is the internal central product of $\L_2$ and $\L_1$,
\end{proof}

\begin{prop}\label{InternalCentralProductsPartialGroups}
Consider the map
\[\phi\colon\L_1\times \L_2\rightarrow \L\mbox{ with }(f,g)\mapsto \Pi(f,g)\]
which is well-defined if $(f,g)\in\D$ for all $f\in\L_1$ and $g\in\L_2$. The following hold:
\begin{itemize}
\item [(a)] The map $\phi$ is well-defined and a projection of partial groups if and only if $\L$ is the internal central product of $\L_1$ and $\L_2$. 
\item [(b)] If  $\phi$ is well-defined and a projection of partial groups then \[\ker(\phi)=\{(f,f^{-1})\colon f\in\L_1\cap\L_2\}\leq Z(\L_1\times\L_2)\] and $\ker(\phi)\cap\hat{\L_i}=\{\One\}$.
\item [(c)] The map $\phi$ is well-defined and an isomorphism of partial groups if and only if $\L$ is the internal direct product of $\L_1$ and $\L_2$. 
\item [(d)] If $\phi$ is well-defined then $\hat{\L_i}\phi=\L_i$ and the map $\hat{\L_i}\rightarrow\L_i$ induced by $\phi$ is an isomorphism for $i=1,2$. 
\end{itemize}
\end{prop}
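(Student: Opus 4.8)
The plan is to prove the four assertions essentially in the order (a), (b), (d), (c), reducing (c) and (d) to the analysis of $\ker(\phi)$ and the map-theoretic properties of $\phi$. For (a), the forward direction is the key conceptual point: if $\phi$ is well-defined (so $(f,g)\in\D$ for all $f\in\L_1$, $g\in\L_2$) and a projection of partial groups, I would unwind the definition of $\phi$ together with the descriptions of $\W(\L_1\times\L_2)$ and of the domain $\D_{\L_1\times\L_2}$ given in Section~\ref{SectionDirectProductExternalPartialGroups}: a word $((f_1,g_1),\dots,(f_n,g_n))$ lies in the domain of $\L_1\times\L_2$ iff $(f_1,\dots,f_n)\in\D_1=\D\cap\W(\L_1)$ and $(g_1,\dots,g_n)\in\D_2=\D\cap\W(\L_2)$; applying $\phi^*$ gives the word $(\Pi(f_1,g_1),\dots,\Pi(f_n,g_n))$. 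Since $\beta:=\phi$ being a projection means $\D\phi^*=\D$ (the domain of $\L$) and $\Pi(v\phi^*)=(\Pi(v))\phi$ for all $v$ in the domain of $\L_1\times\L_2$, the condition $\D\phi^*=\D$ translates verbatim into (C1), and the multiplicativity of $\phi$ applied to a length-$n$ word translates into (C2). The reverse direction runs the same computation backwards: assuming (C1) and (C2), one first checks $\phi$ is well-defined using Lemma~\ref{CentralProductCentralizer} (which gives $(f,g)\in\D$ for $f\in\L_1$, $g\in\L_2$), then checks $\D\phi^*=\D$ from (C1) and $\Pi'$-multiplicativity from (C2). I expect this to be the part that needs the most care, because one must be scrupulous about which $\D$ is meant where and about the identification $\D_i=\D\cap\W(\L_i)$.

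For (b), assume $\phi$ is well-defined and a projection. An element $(f,g)\in\L_1\times\L_2$ lies in $\ker(\phi)$ iff $\Pi(f,g)=\One$; by the cancellation laws in a partial group (\cite[Lemma~1.4(e)]{Chermak:2015} together with \cite[Lemma~1.4(f)]{Chermak:2015}), $\Pi(f,g)=\One$ forces $g=f^{-1}$, and since $f\in\L_1$ while $g=f^{-1}\in\L_2$ we get $f\in\L_1\cap\L_2$; conversely for any $f\in\L_1\cap\L_2$ we have $(f,f^{-1})\in\ker(\phi)$ because $\Pi(f,f^{-1})=\One$ (here $(f,f^{-1})\in\D$ by the partial-group axioms). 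This identifies $\ker(\phi)=\{(f,f^{-1})\colon f\in\L_1\cap\L_2\}$. That this lies in $Z(\L_1\times\L_2)=Z(\L_1)\times Z(\L_2)$ (Lemma~\ref{DirectProductCentre}) follows because $\ker(\phi)$ is a partial normal subgroup (\cite[Lemma~1.14]{Chermak:2015}) whose projection to each $\L_i$ consists of elements of $\L_1\cap\L_2$; using Lemma~\ref{CentralProductCentralizer}, each $f\in\L_1\cap\L_2$ centralizes both $\L_1$ (being in $\L_2\subseteq C_\L(\L_1)$) and $\L_2$ (being in $\L_1\subseteq C_\L(\L_2)$), hence centralizes $\L=\L_1\L_2$, so $f\in Z(\L_i)$ for $i=1,2$. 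Finally $\ker(\phi)\cap\hat\L_i=\{\One\}$: an element $(f,f^{-1})\iota$-in $\hat\L_1$ forces $f^{-1}=\One$ hence $f=\One$, and symmetrically for $\hat\L_2$.

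Part (d) follows directly from the definitions once $\phi$ is well-defined: $\hat\L_1\phi=\{\Pi(f,\One)\colon f\in\L_1\}=\{f\colon f\in\L_1\}=\L_1$ using $\Pi(f,\One)=f$ (Remark~\ref{Ones}), and similarly $\hat\L_2\phi=\L_2$; the induced map $\hat\L_i\to\L_i$ is $\iota_i^{-1}$ followed by the identity, hence a bijective homomorphism of partial groups, i.e.\ an isomorphism (one may also cite Lemma~\ref{DirectProductsLocalitiesInclusions}(b)). For (c): by (a), $\phi$ well-defined and a projection is equivalent to $\L$ being the internal central product; an isomorphism is a projection that is injective; and $\phi$ is injective iff $\ker(\phi)=\{\One\}$ in the sense that every fibre is a singleton — more precisely, since $\phi$ is a projection, $\phi$ is injective iff for each $h\in\L$ there is a unique pair $(f,g)$ with $(f,g)\in\D$ and $h=\Pi(f,g)$, which is exactly condition (D) (recall by Remark~\ref{CentralProductProduct} that existence of such a pair is automatic from (C1)). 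Thus $\phi$ is a well-defined isomorphism iff $\L$ is the internal central product and (D) holds, i.e.\ iff $\L$ is the internal direct product. The main obstacle is the bookkeeping in part (a); everything else is a short deduction from (a), the cancellation laws, Lemma~\ref{CentralProductCentralizer}, and Lemma~\ref{DirectProductCentre}.
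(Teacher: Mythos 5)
Your plan matches the paper's proof essentially step for step: (a) via translating $\D'\phi^*=\D$ and multiplicativity into (C1) and (C2) (with well-definedness supplied by Lemma~\ref{CentralProductCentralizer}), (b) via cancellation plus Lemma~\ref{CentralProductCentralizer} and Lemma~\ref{DirectProductCentre}, (c) by identifying (D) with bijectivity of $\phi$, and (d) via $\Pi(f,\One)=f$ and the inverse of the isomorphism $\iota_i\colon\L_i\rightarrow\hat{\L}_i$. One tiny remark: in (b) the intermediate claim that $f\in\L_1\cap\L_2$ centralizes all of $\L=\L_1\L_2$ is unnecessary (and would itself need justification in a partial group); the containment $\L_1\cap\L_2\subseteq Z(\L_1)\cap Z(\L_2)$, which you already have from the two centralizing statements, is all that is needed, exactly as in the paper.
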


\begin{proof}
By Lemma~\ref{CentralProductCentralizer}, if $\L$ is the central product of $\L_1$ and $\L_2$ then $(f,g)\in\D$ for all $f\in\L_1$ and $g\in\L_2$, i.e. $\phi$ is well-defined. Therefore, we assume in the remainder of the proof that $\phi$ is well-defined. For (a), we will show that $\phi$ is a projection if and only if $\L$ is the central product of $\L_1$ and $\L_2$. 
Write $\Pi'$ for the product on $\L_1\times\L_2$ and $\D'$ for its domain.
Note that 
\begin{eqnarray*}
\D'\phi^*&=&\{(\Pi(f_1,g_1),\dots,\Pi(f_n,g_n))\colon ((f_1,g_1),\dots,(f_n,g_n))\in\D'\}\\
&=&\{(\Pi(f_1,g_1),\dots,\Pi(f_n,g_n))\colon (f_1,\dots,f_n)\in\D\cap\W(\L_1),\;(g_1,\dots,g_n)\in\D\cap\W(\L_2)\}
\end{eqnarray*}
where the first equality follows from the definition of $\phi$ and the second equality follows from the definition of the domain $\D'$ of $\L_1\times\L_2$. Hence, as $(f,g)\in\D$ for all $f\in\L_1$ and all $g\in\L_2$, (C1) holds if and only if $\D=\D'\phi^*$.

\smallskip

Let now $v=((f_1,g_1),\dots,(f_n,g_n))\in\D'$, or equivalently, $(f_1,\dots,f_n)\in\D_1=\D\cap\W(\L_1)$ and $(g_1,\dots,g_n)\in\D_2=\D\cap \W(\L_2)$. We have $v\phi^*=(\Pi(f_1,g_1),\dots,\Pi(f_n,g_n))$ and thus 
\[\Pi(v\phi^*)=\Pi(\Pi(f_1,g_1),\dots,\Pi(f_n,g_n)).\] 
Moreover, $\Pi'(v)=(\Pi_1(f_1,\dots,f_n),\Pi_2(g_1,\dots,g_n))=(\Pi(f_1,\dots,f_n),\Pi(g_1,\dots,g_n))$ by definition of the product $\Pi'$ on $\L_1\times\L_2$. Thus
\[(\Pi'(v))\phi=\Pi(\Pi(f_1,\dots,f_n),\Pi(g_1,\dots,g_n)).\]
Hence, we have $\Pi(v\phi^*)=(\Pi'(v))\phi$ for all $v\in\D'$ if and only if (C2) holds. This proves (a).

\smallskip

For (b) assume that $\phi$ is well-defined and a projection of partial groups (so that $\ker(\phi)$ is well-defined). Clearly, for all $f\in\L_1\cap\L_2$, we have $(f,f^{-1})\phi=\Pi(f,f^{-1})=\One$ and thus $(f,f^{-1})\in\ker(\phi)$. Let now $(f,g)\in\ker(\phi)$ with $f\in\L_1$ and $g\in\L_2$. Then $\Pi(f,g)=(f,g)\phi=\One=\Pi(f,f^{-1})$. Hence, by the left cancellation property \cite[Lemma~1.4(e)]{Chermak:2015}, $g=f^{-1}$. So $g=f^{-1}\in\L_1\cap\L_2$ and thus $f\in\L_1\cap\L_2$. This shows $(f,g)=(f,f^{-1})$ with $f\in\L_1\cap\L_2$. Hence, $\ker(\phi)=\{(f,f^{-1})\colon f\in\L_1\cap\L_2\}$. By Lemma~\ref{CentralProductCentralizer}, $\L_1\cap\L_2\subseteq Z(\L_i)$ for $i=1,2$. So we have $\ker(\phi)\subseteq (\L_1\cap\L_2)\times(\L_1\cap\L_2)\subseteq Z(\L_1)\times Z(\L_2)=Z(\L_1\times\L_2)$ by Lemma~\ref{DirectProductCentre}. Clearly, $\ker(\phi)\cap\hat{\L_i}=\{\One\}$ for $i=1,2$. This shows (b).

\smallskip

Property (D) means that for each $h\in\L$ there exists a unique $(f,g)\in\L_1\times\L_2$ with $(f,g)\phi=\Pi(f,g)=h$, i.e. that $\phi$ is bijective. Hence, (c) follows from (a).

\smallskip

For $i=1,2$, let $\hat{\iota_i}$ be the restriction of $\iota_i$ to a map $\L_i\rightarrow\hat{\L_i}$, which by Lemma~\ref{DirectProductsLocalitiesInclusions}(b) is an isomorphism of partial groups. Thus, $f\hat{\iota_1}=(f,\One)$ for all $f\in\L_1$, and $g\hat{\iota_2}=(\One,g)$ for all $g\in\L_2$. Note that, for all $f\in\L_1$, we have $(f,\One)\phi=\Pi(f,\One)=f$ and, for all $g\in\L_2$, we have $(\One,g)\phi=\Pi(\One,g)=g$. Thus, for $i=1,2$, $\hat{\L_i}\phi=\L_i$ and the map $\hat{\L_i}\rightarrow \L_i$ induced by $\phi$ is the same as $\hat{\iota_i}^{-1}$. By Remark~\ref{IsomorphismOfPartialGroups}(a), the inverse map of an isomorphism of partial groups is an isomorphism of partial groups. Hence, $\phi$ induces an isomorphism of partial groups $\hat{\L_i}\rightarrow\L_i$. 
\end{proof}

\begin{example}\label{DirectProductPartialGroupExternalInternal}
Let $\L_1$ and $\L_2$ be arbitrary partial groups (not necessarily partial subgroups of $\L$). Then the external direct product $\L_1\times\L_2$ is the internal direct product of $\hat{\L_1}$ and $\hat{\L_2}$. 
\end{example}

\begin{proof}
Set $\L=\L_1\times \L_2$. We prove the assertion using Proposition~\ref{InternalCentralProductsPartialGroups}(c) (even though it would also be possible to give a direct proof). So we show that the map $\phi\colon \hat{\L}_1\times\hat{\L}_2\rightarrow \L$ with $(\hat{f},\hat{g})\mapsto \Pi(\hat{f},\hat{g})$ is an isomorphism. Notice that, for all  $\hat{f}\in\hat{\L}_1$ and $\hat{g}\in\hat{\L}_2$, there exist $f\in\L_1$ and $g\in\L_2$ such that $\hat{f}=f\iota_1$ and $\hat{g}=g\iota_2$. Then by Remark~\ref{IotaRemark}, $(\hat{f},\hat{g})\in\D$ and $\Pi(\hat{f},\hat{g})=(f,g)$. So $\phi$ is well-defined and the inverse of the map $\L_1\times\L_2\rightarrow\hat{\L}_1\times\hat{\L}_2,(f,g)\mapsto (f\iota_1,g\iota_2)$ which is an isomorphism of partial groups by Remark~\ref{DirectProductPartialGroupIso} and Lemma~\ref{DirectProductsLocalitiesInclusions}(b). Hence, $\phi$ is an isomorphism of partial groups by Lemma~\ref{IsomorphismOfPartialGroups}(a).   
\end{proof}

From now on we assume that $(\L,\Delta,S)$ is a locality.

\begin{definition}
Let  $(\L_1,\Delta_1,S_1)$ and $(\L_2,\Delta_2,S_2)$ be sublocalities of $\L$. We say that the locality $(\L,\Delta,S)$ is the \textit{(internal) central product of the localities} $(\L_1,\Delta_1,S_1)$ and $(\L_2,\Delta_2,S_2)$ if the following conditions hold:
\begin{itemize}
\item $\L$ is the internal central product of $\L_1$ and $\L_2$ as a partial group, 
\item $S=S_1S_2$, and 
\item $\Delta$ is the set of subgroups of $S$ containing a subgroup of the form $P_1P_2$ with $P_i\in\Delta_i$ for $i=1,2$.
\end{itemize}
If in addition to these properties (D) holds, i.e. if $\L$ is the internal direct product of $\L_1$ and $\L_2$, then we call $(\L,\Delta,S)$ the \textit{(internal) direct product of the localities} $(\L_1,\Delta_1,S_1)$ and $(\L_2,\Delta_2,S_2)$. 
\end{definition}

\begin{example}\label{CentralProductExternalInternal}
Let $(\L_1,\Delta_1,S_1)$ and $(\L_2,\Delta_2,S_2)$ be localities. Write $(\L,\Delta,S)$ for the external direct product of the localities $(\L_1,\Delta_1,S_1)$ and $(\L_2,\Delta_2,S_2)$, i.e. $\L=\L_1\times\L_2$, $S=S_1\times S_2$, and $\Delta=\Delta_1*\Delta_2$ is the set of subgroups of $S$ containing a subgroup of the form $P_1\times P_2$ with $P_i\in\Delta_i$ for $i=1,2$. Set $\hat{\Delta}_i:=\Delta_i\iota_i$ and $\hat{S}_i=S_i\iota_i$ for $i=1,2$. 
\begin{itemize}
\item [(a)] For each $i=1,2$, $(\hat{\L}_i,\hat{\Delta}_i,\hat{S}_i)$ is a sublocality of $(\L,\Delta,S)$. Moreover, $(\L,\Delta,S)$ is the internal direct product of the localities $(\hat{\L}_1,\hat{\Delta}_1,\hat{S}_1)$ and $(\hat{\L}_2,\hat{\Delta}_2,\hat{S}_2)$.
\item [(b)] Let $Z\leq Z(\L)$ with $Z\cap\hat{\L}_i=\{\One\}$ and let $\rho\colon \L\rightarrow\L/Z$ so that $(\L/Z,\Delta\rho,S\rho)$ is the external central product of $(\L_1,\Delta_1,S_1)$ and $(\L_2,\Delta_2,S_2)$ over $Z$. Then for $i=1,2$, $(\hat{\L}_i\rho,\hat{\Delta}_i\rho,\hat{S}_i\rho)$ is a sublocality of $(\L/Z,\Delta\rho,S\rho)$ and $\rho|_{\hat{\L}_i}\colon \hat{\L}_i\rightarrow \hat{\L}_i\rho$ is a projection of localities from $(\hat{\L}_i,\hat{\Delta}_i,\hat{S}_i)$ to $(\hat{\L}_i\rho,\hat{\Delta}_i\rho,\hat{S}_i\rho)$. Moreover, $(\L/Z,\Delta\rho,S\rho)$ is an internal central product of $(\hat{\L}_1\rho,\hat{\Delta}_1\rho,\hat{S}_1\rho)$ and $(\hat{\L}_2\rho,\hat{\Delta}_2\rho,\hat{S}_2\rho)$. 
\end{itemize} 
\end{example}

\begin{proof}
By Lemma~\ref{DirectProductLiSublocality}, $(\hat{\L}_i,\hat{\Delta}_i,\hat{S}_i)$ is a sublocality of $(\L,\Delta,S)$. As seen in Example~\ref{DirectProductPartialGroupExternalInternal}, $\L=\L_1\times\L_2$ is an internal direct product of $\hat{\L}_1$ and $\hat{\L}_2$. It is now immediate that $(\L,\Delta,S)$ is an internal direct product of the sublocalities $(\hat{\L}_1,\hat{\Delta}_1,\hat{S}_1)$ and $(\hat{\L}_2,\hat{\Delta}_2,\hat{S}_2)$. This shows (a). Property (b) follows now from (a) and Lemma~\ref{CentralProductTranslateProjection} below.
\end{proof}

\textbf{Suppose from now on that  $(\L_1,\Delta_1,S_1)$ and $(\L_2,\Delta_2,S_2)$ are sublocalities of $\L$.}

\begin{lemma}\label{CentralProductTranslateProjection}
Suppose $(\L,\Delta,S)$ is the internal central product of $(\L_1,\Delta_1,S_1)$ and $(\L_2,\Delta_2,S_2)$. Let $(\L',\Delta',S')$ be a locality and let $\beta\colon\L\rightarrow\L'$ be a projection of localities from $(\L,\Delta,S)$ to $(\L',\Delta',S')$ with $\ker(\beta)\subseteq Z(\L)$. 

\smallskip

\noindent Then $(\L_i\beta,\Delta_i\beta,S_i\beta)$ is a sublocality of $(\L',\Delta',S')$ for $i=1,2$ and $\beta|_{\L_i}\colon \L_i\rightarrow\L_i\beta$ is a projection of localities from $(\L_i,\Delta_i,S_i)$ to $(\L_i\beta,\Delta_i\beta,S_i\beta)$. Moreover, $(\L',\Delta',S')$ is the internal central product of the sublocalities $(\L_1\beta,\Delta_1\beta,S_1\beta)$ and $(\L_2\beta,\Delta_2\beta,S_2\beta)$.
\end{lemma}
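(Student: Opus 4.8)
The plan is to reduce everything to results already available. First I would handle the sublocality claim: since $(\L_i,\Delta_i,S_i)$ is a sublocality of $(\L,\Delta,S)$, and $\beta$ is a projection of localities with $\ker(\beta)\subseteq Z(\L)$, Lemma~\ref{SublocalityUnderProjection} applies directly and gives that $(\L_i\beta,\Delta_i\beta,S_i\beta)$ is a sublocality of $(\L',\Delta',S')$ and that $\beta|_{\L_i}\colon\L_i\rightarrow\L_i\beta$ is a projection of localities, for $i=1,2$. So this part is immediate.

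The substance is showing $(\L',\Delta',S')$ is the internal central product of $(\L_1\beta,\Delta_1\beta,S_1\beta)$ and $(\L_2\beta,\Delta_2\beta,S_2\beta)$, i.e. checking the three defining conditions. For $S'=(S_1\beta)(S_2\beta)$: since $\beta$ is a projection of localities, $S\beta=S'$, and $S=S_1S_2$, and $\beta$ is a homomorphism of partial groups, so $S'=S\beta=(S_1S_2)\beta=(S_1\beta)(S_2\beta)$ (using that products are preserved). Next, that $\L'$ is the internal central product of $\L_1\beta$ and $\L_2\beta$ as a partial group: here I would use Proposition~\ref{InternalCentralProductsPartialGroups}(a), which says this is equivalent to the map $\phi'\colon(\L_1\beta)\times(\L_2\beta)\rightarrow\L'$, $(f',g')\mapsto\Pi'(f',g')$, being well-defined and a projection of partial groups. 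Since $\L$ is the internal central product of $\L_1$ and $\L_2$, we have by the same proposition a projection $\phi\colon\L_1\times\L_2\rightarrow\L$. By Remark~\ref{DirectProductPartialGroupIso} applied to the isomorphisms $\L_i\rightarrow\L_i\beta$ induced by $\beta$ (which are projections but not necessarily isomorphisms — so I need a more careful argument here, see below), together with the projection $\beta$, one builds a projection $(\L_1\times\L_2)\rightarrow(\L_1\beta)\times(\L_2\beta)\rightarrow\L'$ whose underlying map is $(f,g)\mapsto\Pi'(f\beta,g\beta)=\Pi(f,g)\beta$, i.e. $\phi\circ\beta$; composing the two projections $\phi$ and $\beta$ shows $\phi\circ\beta$ is a projection onto $\L'$, and since it factors through $(\L_1\beta)\times(\L_2\beta)$ via the map $(f,g)\mapsto(f\beta,g\beta)$ (which is a surjective homomorphism of partial groups), $\phi'$ is a projection. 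Finally, $\Delta'$ must be the set of subgroups of $S'$ containing some $P_1'P_2'$ with $P_i'\in\Delta_i\beta$: by definition $\Delta$ is the set of subgroups of $S$ containing some $P_1P_2$, $P_i\in\Delta_i$; applying Remark~\ref{EpiConjugates}-style reasoning together with Lemma~\ref{CentralProductFusionSystems}(c) on the fusion-system side, or more directly, using that $\Delta'=\Delta\beta$ and that $\beta$ sends $P_1P_2$ to $(P_1\beta)(P_2\beta)$ and is surjective with $\ker(\beta)\subseteq Z(\L)\cap\Delta$-saturating the poset, one checks $\Delta'$ has the required form.

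The main obstacle I anticipate is the middle step — verifying that $\phi'$ is well-defined and a projection. The subtlety is that $\beta|_{\L_i}$ need not be injective, so Remark~\ref{DirectProductPartialGroupIso} does not literally apply; I would instead argue directly that $\phi'$ is well-defined (each $(f',g')\in(\L_1\beta)\times(\L_2\beta)$ lifts to $(f,g)$ with $f\in\L_1$, $g\in\L_2$, whence $(f,g)\in\D$ by Lemma~\ref{CentralProductCentralizer}, so $(f',g')=(f\beta,g\beta)\in\D'$ since $\beta^*$ preserves $\D$), that its image equals $\L'$ (by Remark~\ref{CentralProductProduct} and surjectivity of $\beta$), and that $\D'\cap\W((\L_1\beta)\times(\L_2\beta))$-words all lift — this is where $\ker(\beta)\subseteq Z(\L)$ is essential, via Lemma~\ref{ModCentral1}, which lets me pull back any word in $\D'$ through $\beta^*$ into $\D$, and then through $\phi^*$ using that $\phi$ is a projection. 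Composing and bookkeeping the domains carefully then yields that $\phi'$ is a projection, and (C1), (C2) follow from Proposition~\ref{InternalCentralProductsPartialGroups}(a).
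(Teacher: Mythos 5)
Your proposal is correct and follows essentially the same route as the paper: the sublocality and projection claims come straight from Lemma~\ref{SublocalityUnderProjection}, and the central-product claim rests on lifting words of $\D'$ back to $\D$ via Lemma~\ref{ModCentral1} (using $\ker(\beta)\subseteq Z(\L)$) and then pushing the (C1)/(C2) structure of $\L$ forward through the homomorphism $\beta$. The only cosmetic difference is packaging: the paper verifies (C1) and (C2) for $\L'$ directly rather than via the map $\phi'$ and Proposition~\ref{InternalCentralProductsPartialGroups}(a), but your ``direct argument'' fallback in the last paragraph is exactly the paper's computation, and your handling of the remaining conditions on $S'$ and $\Delta'$ matches what the paper leaves as an easy observation.
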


\begin{proof}
By Lemma~\ref{SublocalityUnderProjection}, $(\L_i\beta,\Delta_i\beta,S_i\beta)$ is a sublocality of $(\L',\Delta',S')$ for $i=1,2$ and $\beta|_{\L_i}\colon \L_i\rightarrow\L_i\beta$ is a projection of localities from $(\L_i,\Delta_i,S_i)$ to $(\L_i\beta,\Delta_i\beta,S_i\beta)$. 

\smallskip

We show next that $\L'$ is the internal central product of $\L_1\beta$ and $\L_2\beta$ as a partial group. Write $\Pi'\colon\D'\rightarrow\L'$ for the partial product on $\L'$. Set  
\begin{eqnarray*}
\D^+:=\{(\Pi'(\hat{f}_1,\hat{g}_1),\dots,\Pi'(\hat{f}_n,\hat{g}_n))\colon (\hat{f}_1,\dots,\hat{f}_n)\in \D'\cap\W(\L_1\beta)\;(\hat{g}_1,\dots,\hat{g}_n)\in\D'\cap\W(\L_2\beta),& &\\
(\hat{f}_j,\hat{g}_j)\in\D'\mbox{ for }j=1,\dots,n\}&.&
\end{eqnarray*}
Showing property (C1) for $\L'$ means to show that $\D^+=\D'$. As $\beta$ is a projection and (C1) holds for $\L$, it is straightforward to check that $\D'=\D\beta^*\subseteq \D^+$. Let now $w\in\D^+$ and write $w=(\Pi'(\hat{f}_1,\hat{g}_1),\dots,\Pi'(\hat{f}_n,\hat{g}_n))$ with $w_1:=(\hat{f}_1,\dots,\hat{f}_n)\in \D'\cap\W(\L_1\beta)$, $w_2:=(\hat{g}_1,\dots,\hat{g}_n)\in\D'\cap\W(\L_2\beta)$, and $(\hat{f}_j,\hat{g}_j)\in\D'$ for $j=1,\dots,n$. For $j=1,\dots,n$ let $f_j\in\L_1$ and $g_j\in\L_2$ with $f_j\beta=\hat{f}_j$ and $g_j\beta=\hat{g}_j$. Set $v_1:=(f_1,\dots,f_n)$ and $v_2=(f_1,\dots,f_2)$. Note that $v_i\beta^*=w_i\in\D'$ for $i=1,2$, and $(f_j,g_j)\beta^*=(\hat{f}_j,\hat{g}_j)\in\D'$ for $j=1,\dots,n$. So by Lemma~\ref{ModCentral1}, $v_i\in\D$ for $i=1,2$ and $(f_j,g_j)\in\D$ for $j=1,\dots,n$. Hence, since (C1) holds for $\L$, we have $v:=(\Pi(f_1,g_1),\dots,\Pi(f_n,g_n))\in\D$. As $\beta$ is a homomorphism of partial groups, $v\beta^*=w$ and thus $w\in \D\beta^*=\D'$. This shows $\D^+=\D'$ and (C1) holds for $\L'$. Moreover, using that (C2) holds for $\L$ and that $\beta$ is a homomorphism of partial groups, we obtain  $\Pi'(w)=\Pi'(v\beta^*)=(\Pi(v))\beta=(\Pi(\Pi(v_1),\Pi(v_2)))\beta=\Pi'(\Pi'(v_1\beta^*),\Pi'(v_2\beta^*))=\Pi'(\Pi'(\hat{f}_1,\dots,\hat{f}_n),\Pi'(\hat{g}_1,\dots,\hat{g}_n))$. Hence (C2) holds for $\L'$. So $\L'$ is the central product of $\L_1\beta$ and $\L_2\beta$ as a partial group.

\smallskip

Since $S\beta=S'$, $\Delta\beta=\Delta'$ and $(\L,\Delta,S)$ is the internal central product of $\L_1$ and $\L_2$, it is now easy to observe that the assertion holds.  
\end{proof}

\begin{prop}\label{InternalCentralProductsLocalities}
Let $(\L_1\times\L_2,\Delta_1*\Delta_2,S_1\times S_2)$ be the external direct product of the localities $(\L_1,\Delta_1,S_1)$ and $(\L_2,\Delta_2,S_2)$, i.e. $\Delta_1*\Delta_2$ is the set of subgroups of $S_1\times S_2$ containing a subgroup of the form $P_1\times P_2$ with $P_i\in\Delta_i$ for $i=1,2$. Consider the map
\[\phi\colon \L_1\times\L_2\rightarrow \L,\;(f,g)\mapsto \Pi(f,g).\]
Then the following hold:
\begin{itemize}
 \item [(a)] The map $\phi$ is well defined and a projection of localities from $(\L_1\times\L_2,\Delta_1*\Delta_2,S_1\times S_2)$ to $(\L,\Delta,S)$ if and only if $(\L,\Delta,S)$ is the internal central product of the localities $(\L_1,\Delta_1,S_1)$ and $(\L_2,\Delta_2,S_2)$. 
 \item [(b)] Suppose $\phi$ is well-defined and a projection of localities from $(\L_1\times\L_2,\Delta_1*\Delta_2,S_1\times S_2)$ to $(\L,\Delta,S)$. Then the quotient locality 
\[(\L_1\times\L_1,\Delta_1*\Delta_2,S_1\times S_2)/\ker(\phi)\] 
forms an external central product of the localities $(\L_1,\Delta_1,S_1)$ and $(\L_2,\Delta_2,S_2)$, and $\phi$ induced an isomorphism of localities
\[(\L_1\times\L_2)/\ker(\phi)\rightarrow\L,\;h\ker(\phi)\mapsto h\phi.\]
 \item [(c)] Suppose $\phi$ is well-defined and a projection between the localities $(\L_1\times\L_2,\Delta_1*\Delta_2,S_1\times S_2)$ and $(\L,\Delta,S)$. Then the following are equivalent:
\begin{itemize}
\item [(i)] $\phi$ is an isomorphism of localities,
\item [(ii)] $\ker(\phi)=\{\One\}$,
\item [(iii)] $(\L,\Delta,S)$ is the internal direct product of $(\L_1,\Delta_1,S_1)$ and $(\L_2,\Delta_2,S_2)$,
\item [(iv)] $\L_1\cap\L_2=\{\One\}$. 
\end{itemize}
\end{itemize}
\end{prop}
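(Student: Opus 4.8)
The plan is to reduce everything to Proposition~\ref{InternalCentralProductsPartialGroups} together with the basic transport-of-structure facts for projections of localities. For part~(a), suppose first that $\phi$ is well-defined and a projection of localities. Being a projection of localities means in particular that $\phi$ is a projection of partial groups, so by Proposition~\ref{InternalCentralProductsPartialGroups}(a) the partial group $\L$ is the internal central product of $\L_1$ and $\L_2$. It remains to check the two remaining conditions in the definition of internal central product of localities: that $S=S_1S_2$ and that $\Delta$ is the set of subgroups of $S$ containing some $P_1P_2$ with $P_i\in\Delta_i$. Here I would use that $\phi$ sends $S_1\times S_2$ onto $S$ and $\Delta_1*\Delta_2$ onto $\Delta$; since $(f,g)\phi=\Pi(f,g)=fg$ and (under the identification $\hat\L_i\phi=\L_i$ from Proposition~\ref{InternalCentralProductsPartialGroups}(d)) $S_i\iota_i\phi=S_i$, one gets $S=(S_1\times S_2)\phi=\{\Pi(s_1,s_2):s_i\in S_i\}=S_1S_2$. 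Similarly $\Delta=(\Delta_1*\Delta_2)\phi$, and one unwinds the definition of $\Delta_1*\Delta_2$ as overgroups of the $P_1\times P_2$ and notes that $(P_1\times P_2)\phi=P_1P_2$ and that $\phi$ preserves the overgroup relation among subgroups of $S_1\times S_2$ lying over such products, to conclude $\Delta$ is exactly the overgroups in $S$ of the $P_1P_2$. Conversely, if $(\L,\Delta,S)$ is the internal central product of the two localities, then $\L$ is the internal central product of $\L_1$ and $\L_2$ as a partial group, so by Proposition~\ref{InternalCentralProductsPartialGroups}(a) $\phi$ is well-defined and a projection of partial groups; then $(S_1\times S_2)\phi=S_1S_2=S$ and $(\Delta_1*\Delta_2)\phi=\Delta$ follow from the same computations read backwards, so $\phi$ is a projection of localities.

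For part~(b), once $\phi$ is a projection of localities, Proposition~\ref{InternalCentralProductsPartialGroups}(b) gives $\ker(\phi)\leq Z(\L_1\times\L_2)$ with $\ker(\phi)\cap\hat\L_i=\{\One\}$; in particular $\ker(\phi)$ is a partial normal subgroup contained in the center, so by the discussion of quotient localities in Subsection~\ref{SubsectionLocalitiesProjections} the quotient $(\L_1\times\L_2,\Delta_1*\Delta_2,S_1\times S_2)/\ker(\phi)$ is a locality, and by the very definition of the external central product of localities (a quotient of the external direct product by a central $Z$ with $Z\cap\hat\L_i=\{\One\}$) it is an external central product of $(\L_1,\Delta_1,S_1)$ and $(\L_2,\Delta_2,S_2)$. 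The induced map $h\ker(\phi)\mapsto h\phi$ is well-defined and an isomorphism of partial groups by Chermak's theorem (\cite[Theorem~4.7]{Chermak:2015}) cited in Subsection~\ref{SubsectionLocalitiesProjections}; and since it carries the object set $(\Delta_1*\Delta_2)\ker(\phi)$-image onto $\Delta$ and the Sylow image onto $S$ (because $\phi$ does), it is an isomorphism of localities.

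For part~(c), I would run the cycle (i)$\Rightarrow$(ii)$\Rightarrow$(iv)$\Rightarrow$(iii)$\Rightarrow$(i). An isomorphism of localities is in particular injective, so $\ker(\phi)=\{\One\}$, giving (i)$\Rightarrow$(ii). By the formula $\ker(\phi)=\{(f,f^{-1}):f\in\L_1\cap\L_2\}$ from Proposition~\ref{InternalCentralProductsPartialGroups}(b), $\ker(\phi)=\{\One\}$ forces $\L_1\cap\L_2=\{\One\}$, so (ii)$\Rightarrow$(iv); conversely the same formula gives (iv)$\Rightarrow$(ii), and combined with part~(a) and Proposition~\ref{InternalCentralProductsPartialGroups}(c) (which says $\phi$ is an isomorphism of partial groups iff $\L$ is the internal direct product) one gets (iv)$\Rightarrow$(iii): from (iv) we have $\ker(\phi)=\{\One\}$, so $\phi$ is a bijective projection of partial groups, hence an isomorphism of partial groups, hence $\L$ is the internal direct product of $\L_1$ and $\L_2$ by Proposition~\ref{InternalCentralProductsPartialGroups}(c); together with the hypotheses of part~(c) that the locality data already match, this is exactly the statement that $(\L,\Delta,S)$ is the internal direct product of the two localities. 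Finally (iii)$\Rightarrow$(i): if $(\L,\Delta,S)$ is the internal direct product then $\L$ is the internal direct product of $\L_1$ and $\L_2$, so $\phi$ is a bijective projection of partial groups by Proposition~\ref{InternalCentralProductsPartialGroups}(c), hence injective, hence an isomorphism of localities (it is already a projection of localities by part~(a)). The only mildly delicate point I anticipate is bookkeeping the identifications $\hat\L_i\phi=\L_i$ and $S_i\iota_i\phi=S_i$, $\Delta_i\iota_i\phi=\Delta_i$ carefully enough to conclude $(\Delta_1*\Delta_2)\phi=\Delta$ and $(S_1\times S_2)\phi=S$ in part~(a); everything else is a direct appeal to Proposition~\ref{InternalCentralProductsPartialGroups} and to the quotient-locality machinery.
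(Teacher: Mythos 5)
Your proposal is correct and follows essentially the same route as the paper: part (a) by combining Proposition~\ref{InternalCentralProductsPartialGroups}(a) with the observations that $(S_1\times S_2)\phi=S_1S_2$ and that $(\Delta_1*\Delta_2)\phi$ is exactly the set of overgroups in $S$ of the subgroups $P_1P_2$; part (b) via Proposition~\ref{InternalCentralProductsPartialGroups}(b), the definition of the external central product and \cite[Theorem~4.7]{Chermak:2015}; and part (c) via the kernel formula and Proposition~\ref{InternalCentralProductsPartialGroups}(c). The one step you assert without justification -- that $\ker(\phi)=\{\One\}$ forces the projection $\phi$ to be bijective, used in your implications (iv)$\Rightarrow$(iii) and (iii)$\Rightarrow$(i) -- is not automatic for partial groups as it is for ordinary groups, and it is precisely where the paper invokes \cite[Theorem~4.4(d)]{Chermak:2015} (a projection of localities is an isomorphism if and only if its kernel is trivial), so that reference should be added.
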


\begin{proof}
 Suppose $\phi$ is well-defined. Then $(S_1\times S_2)\phi=S_1S_2$ and $(\Delta_1*\Delta_2)\phi=\{Q\phi\colon Q\in\Delta_1*\Delta_2\}$ is the set of subgroups of $S$ containing a subgroup of the form $P_1P_2$ with $P_i\in\Delta_i$ for $i=1,2$. Hence, (a) follows from  Lemma~\ref{InternalCentralProductsPartialGroups}(a). Assume now that $\phi$ is a projection between the localities $(\L_1\times\L_2,\Delta_1*\Delta_2,S_1\times S_2)$ and $(\L,\Delta,S)$. Then $\ker(\phi)$ is a partial normal subgroup and we can form the quotient locality $(\L_1\times\L_2)/\ker(\phi)$ and by \cite[Theorem~4.7]{Chermak:2015}, $\phi$ induces an isomorphism between the localities $(\L_1\times\L_2,\Delta_1*\Delta_2,S_1\times S_2)/\ker(\phi)$ and $(\L,\Delta,S)$. By Lemma~\ref{InternalCentralProductsPartialGroups}(b), $(\L_1\times\L_2,\Delta_1*\Delta_2,S_1\times S_2)/\ker(\phi)$ forms an external central product of $(\L_1,\Delta_1,S_1)$ and $(\L_2,\Delta_2,S_2)$. This proves (b).

\smallskip

By \cite[Theorem~4.4(d)]{Chermak:2015}, a projection between localities is an isomorphism if and only if its kernel is trivial. Hence, properties (i) and (ii) in part (c) are equivalent. Properties (i) and (iii) are equivalent by (a) and Lemma~\ref{InternalCentralProductsPartialGroups}(c). By Lemma~\ref{InternalCentralProductsPartialGroups}(b), $\ker(\phi)=\{(f,f^{-1})\colon f\in\L_1\cap\L_2\}$ which implies that (ii) and (iv) are equivalent. 
\end{proof}

\begin{lemma}
 Suppose $(\L,\Delta,S)$ is the internal central product of $(\L_1,\Delta_1,S_1)$ and $(\L_2,\Delta_2,S_2)$. Then $\L_1$ and $\L_2$ are partial normal subgroups of $\L$.
\end{lemma}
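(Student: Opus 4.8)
The plan is to deduce the statement from the external picture by means of the canonical projection $\phi$. Since $(\L,\Delta,S)$ is the internal central product of $(\L_1,\Delta_1,S_1)$ and $(\L_2,\Delta_2,S_2)$, the partial group $\L$ is the internal central product of $\L_1$ and $\L_2$; hence by Lemma~\ref{CentralProductCentralizer} the word $(f,g)$ lies in $\D$ for all $f\in\L_1$ and $g\in\L_2$, so the map
\[\phi\colon\L_1\times\L_2\rightarrow\L,\quad(f,g)\mapsto\Pi(f,g)\]
from the external direct product $(\L_1\times\L_2,\Delta_1*\Delta_2,S_1\times S_2)$ is well-defined. By Proposition~\ref{InternalCentralProductsLocalities}(a) it is a projection of localities, and in particular a projection of partial groups.

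Next I would invoke the transport-of-normality result. By Lemma~\ref{DirectProductsLocalitiesInclusions}(a), the partial subgroup $\hat{\L}_i=\L_i\iota_i$ is a partial normal subgroup of $\L_1\times\L_2$ for $i=1,2$. Applying Lemma~\ref{LocalitiesProjectionsPartialNormal} with the external direct product $(\L_1\times\L_2,\Delta_1*\Delta_2,S_1\times S_2)$ in the role of the locality and $\phi$ in the role of the projection, it follows that $\hat{\L}_i\phi$ is a partial normal subgroup of $\L$ for $i=1,2$. Finally, Proposition~\ref{InternalCentralProductsPartialGroups}(d) identifies $\hat{\L}_i\phi$ with $\L_i$, which gives the claim.

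There is no genuine obstacle here: the argument is a short chain of appeals to results already established above. The only point requiring a little care is to use $\phi$ merely as a projection of \emph{partial groups} — which is all that Lemma~\ref{LocalitiesProjectionsPartialNormal} requires — rather than attempting a direct verification that $n^f\in\L_i$ for all $f\in\L$ and $n\in\L_i\cap\D(f)$, which would in effect reprove that projections of partial groups carry partial normal subgroups to partial normal subgroups.
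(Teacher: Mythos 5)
Your proposal is correct and follows essentially the same route as the paper's own proof: establish that $\phi\colon \L_1\times\L_2\rightarrow\L$, $(f,g)\mapsto\Pi(f,g)$ is a well-defined projection via Proposition~\ref{InternalCentralProductsLocalities}(a), note that $\hat{\L}_i$ is partial normal in $\L_1\times\L_2$ by Lemma~\ref{DirectProductsLocalitiesInclusions}(a), push it forward with Lemma~\ref{LocalitiesProjectionsPartialNormal}, and identify $\hat{\L}_i\phi=\L_i$ by Proposition~\ref{InternalCentralProductsPartialGroups}(d). The only difference is cosmetic: you make the well-definedness of $\phi$ explicit via Lemma~\ref{CentralProductCentralizer}, which the paper leaves folded into its citation of Proposition~\ref{InternalCentralProductsLocalities}(a).
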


\begin{proof}
 Let $i\in\{1,2\}$. By Proposition~\ref{DirectProductsLocalitiesInclusions}(a), $\hat{\L}_i=\L_i\iota_i$ is a partial normal subgroup of $\L_1\times\L_2$. By Lemma~\ref{InternalCentralProductsLocalities}(a), the map $\phi\colon \L_1\times\L_2\rightarrow \L,\;(f,g)\mapsto \Pi(f,g)$ is well-defined and a projection between the localities $(\L_1\times\L_2,\Delta_1*\Delta_2,S_1\times S_2)$ and $(\L,\Delta,S)$. So by Lemma~\ref{LocalitiesProjectionsPartialNormal}, $\hat{\L_i}\phi$ is a partial normal subgroup of $\L$. By Proposition~\ref{InternalCentralProductsPartialGroups}(d), we have $\hat{\L_i}\phi=\L_i$ and thus the assertion follows. 
\end{proof}

\begin{lemma}\label{InternalCentralProductLinkingLocality}
Suppose $(\L,\Delta,S)$ is the internal central product of $(\L_1,\Delta_1,S_1)$ and $(\L_2,\Delta_2,S_2)$. Then $(\L_i,\Delta_i,S_i)$ is of objective characteristic $p$ for $i=1,2$ if and only if $\L_1\cap \L_2\leq S_1\cap S_2$ and $(\L,\Delta,S)$ is of objective characteristic $p$. Similarly, $(\L_i,\Delta_i,S_i)$ is a linking locality for $i=1,2$ if and only if $\L_1\cap \L_2\leq S_1\cap S_2$ and $(\L,\Delta,S)$ is a linking locality.
\end{lemma}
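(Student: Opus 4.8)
The plan is to reduce the statement to Lemma~\ref{ExternalCentralProductLemma} via the projection $\phi\colon\L_1\times\L_2\to\L$, $(f,g)\mapsto\Pi(f,g)$. By Proposition~\ref{InternalCentralProductsLocalities}(a), the hypothesis that $(\L,\Delta,S)$ is the internal central product of $(\L_1,\Delta_1,S_1)$ and $(\L_2,\Delta_2,S_2)$ is exactly the statement that $\phi$ is well-defined and a projection of localities from the external direct product $(\L_1\times\L_2,\Delta_1*\Delta_2,S_1\times S_2)$ to $(\L,\Delta,S)$. Put $K:=\ker(\phi)$. By Proposition~\ref{InternalCentralProductsPartialGroups}(b) we have $K=\{(f,f^{-1})\colon f\in\L_1\cap\L_2\}$, $K\leq Z(\L_1\times\L_2)$, and $K\cap\hat{\L}_i=\{\One\}$ for $i=1,2$.

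The first step is the elementary observation that $K\leq S_1\times S_2$ if and only if $\L_1\cap\L_2\leq S_1\cap S_2$: for $f\in\L_1\cap\L_2$ one has $(f,f^{-1})\in S_1\times S_2$ precisely when $f\in S_1$ and $f^{-1}\in S_2$, hence (as $S_2$ is a subgroup) precisely when $f\in S_1\cap S_2$; and any subset of the $p$-group $S_1\cap S_2$ is automatically a subgroup. Thus ``$\L_1\cap\L_2\leq S_1\cap S_2$'' in the statement translates into ``$K\leq S_1\times S_2$''.

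Next I would invoke Proposition~\ref{InternalCentralProductsLocalities}(b): the quotient locality $(\L_1\times\L_2,\Delta_1*\Delta_2,S_1\times S_2)/K$ is an external central product of $(\L_1,\Delta_1,S_1)$ and $(\L_2,\Delta_2,S_2)$ over $K$, and $\phi$ induces an isomorphism of localities $(\L_1\times\L_2)/K\to(\L,\Delta,S)$. Since $K\leq Z(\L_1\times\L_2)$ and $K\cap\hat{\L}_i=\{\One\}$, Lemma~\ref{ExternalCentralProductLemma}(a),(b) applies with $\L_1\times\L_2$ playing the role of ``$\L$'' and $K$ the role of ``$Z$''. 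It yields: $(\L_i,\Delta_i,S_i)$ is of objective characteristic $p$ for $i=1,2$ if and only if $K\leq S_1\times S_2$ and $(\L_1\times\L_2)/K$ is of objective characteristic $p$; and the analogous equivalence with ``of objective characteristic $p$'' replaced by ``a linking locality''. Finally, an isomorphism of localities restricts to group isomorphisms between the normalizers of corresponding objects and (by \cite[Theorem~5.7(b)]{Henke:2015}) induces an isomorphism of the associated fusion systems, hence preserves both being of objective characteristic $p$ and being a linking locality (the latter using that an isomorphism of fusion systems carries centric radical subgroups to centric radical subgroups). Therefore $(\L_1\times\L_2)/K$ may be replaced throughout by $(\L,\Delta,S)$, and together with the first step this gives exactly the two asserted equivalences.

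The lemma carries no genuine difficulty: it is the ``internal'' reformulation of Lemma~\ref{ExternalCentralProductLemma}, and the only thing to watch is that the external central product produced by Proposition~\ref{InternalCentralProductsLocalities}(b) meets verbatim the hypotheses of Lemma~\ref{ExternalCentralProductLemma} --- which it does, precisely by Proposition~\ref{InternalCentralProductsPartialGroups}(b) --- and that transport along the locality isomorphism $\phi$ preserves the two properties in question.
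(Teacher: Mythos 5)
Your proof is correct, and its skeleton is the same as the paper's: reduce to the external picture via the map $\phi\colon\L_1\times\L_2\to\L$, using Proposition~\ref{InternalCentralProductsLocalities}(a) and Proposition~\ref{InternalCentralProductsPartialGroups}(b), and translate $\L_1\cap\L_2\leq S_1\cap S_2$ into $\ker(\phi)\leq S_1\times S_2$. Where you diverge is in how the central kernel is handled. The paper applies \cite[Proposition~9.3]{Henke:2015} directly to the projection $\phi$ (a projection of localities with kernel contained in $Z(\L_1\times\L_2)$), which immediately gives ``source has the property iff kernel lies in the Sylow and target has the property,'' and then finishes with Lemma~\ref{DirectProductObjectiveCharp} and Lemma~\ref{DirectProductLinkingLocality}. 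You instead factor $\phi$ through the quotient via Proposition~\ref{InternalCentralProductsLocalities}(b), feed the resulting external central product into Lemma~\ref{ExternalCentralProductLemma}(a),(b), and then transport the two properties along the induced isomorphism of localities $(\L_1\times\L_2)/\ker(\phi)\to\L$. This reuses the paper's own lemma about external central products (which internally rests on \cite[Proposition~9.2]{Henke:2015}) rather than citing \cite[Proposition~9.3]{Henke:2015}, at the cost of an extra transport step; that step is fine — an isomorphism of localities carries normalizers of objects isomorphically onto normalizers of image objects and, by \cite[Theorem~5.7(b)]{Henke:2015}, induces an isomorphism of fusion systems, which preserves centric radicals — and indeed the paper itself uses this kind of invariance without comment in Proposition~\ref{IsoFusionIsoLinkingLocality}. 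One cosmetic remark: your aside that ``any subset of the $p$-group $S_1\cap S_2$ is automatically a subgroup'' is not literally true; what you need (and what holds) is that $\L_1\cap\L_2$ is a partial subgroup of $\L$, so once it is contained in the subgroup $S_1\cap S_2$ of $S$ it is itself a subgroup, making the containment ``$\leq$'' equivalent to set-theoretic containment.
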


\begin{proof}
 By Proposition~\ref{InternalCentralProductsLocalities}(a), the map $\phi\colon \L_1\times\L_2\rightarrow \L,\;(f,g)\mapsto \Pi(f,g)$ is well-defined and a projection of localities from $(\L_1\times\L_2,\Delta_1*\Delta_2,S_1\times S_2)$ to $(\L,\Delta,S)$. By Proposition~\ref{InternalCentralProductsPartialGroups}(b), $Z:=\ker(\phi)=\{(f,f^{-1})\colon f\in \L_1\cap\L_2\}\leq Z(\L_1\times\L_2)$. So by \cite[Proposition~9.3]{Henke:2015}, $(\L_1\times\L_2,\Delta_1*\Delta_2,S_1\times S_2)$ is of objective characteristic $p$ if and only if $Z\leq S_1\times S_2$ and $(\L,\Delta,S)$ is of objective characteristic $p$; and $(\L_1\times\L_2,\Delta_1*\Delta_2,S_1\times S_2)$ is a linking locality if and only if $Z\leq S_1\times S_2$ and $(\L,\Delta,S)$ is a linking locality. Note that $Z=\{(f,f^{-1})\colon f\in\L_1\cap\L_2\}\subseteq S_1\times S_2$ if and only if $\L_1\cap \L_2\leq S_1\cap S_2$. So the assertion follows from Lemma~\ref{DirectProductObjectiveCharp} and Lemma~\ref{DirectProductLinkingLocality}. 
\end{proof}

\begin{prop}\label{LastProposition}
 Let $\F$ be a saturated fusion system over $S$ such that $\F$ is the internal central product of two subsystems $\F_1$ and $\F_2$ over $S_1$ and $S_2$ respectively. For $i=1,2$ let $\F_i^{cr}\subseteq\Delta_i\subseteq \F_i^s$ such that $\Delta_i$ is closed under taking $\F_i$-conjugates and overgroups in $S_i$. Let $\Delta$ be the set of overgroups in $S$ of the subgroups of the form $P_1P_2$ with $P_i\in\Delta_i$ for $i=1,2$. 
\begin{itemize}
\item [(a)] The set $\Delta$ is closed under taking $\F$-conjugates and overgroups in $S$. Moreover, $\F^{cr}\subseteq\Delta\subseteq \F^s$. 
\item [(b)] Suppose $(\L,\Delta,S)$ is a linking locality over $\F$. Then $\L$ is the central product of two sublocalities $(\L_1,\Delta_1,S_1)$ and $(\L_2,\Delta_2,S_2)$ such that $\F_i=\F_{S_i}(\L_i)$ for $i=1,2$.
\end{itemize}
\end{prop}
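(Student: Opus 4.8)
The plan is to deduce (a) directly from the fusion-system results already in hand, and to prove (b) by realising the decomposition on an external model of $(\L,\Delta,S)$ and then transporting it back to $\L$ using the essential uniqueness of linking localities.

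\emph{Part (a).} Note first that $\F_1$ and $\F_2$ are saturated (being central factors of the saturated system $\F$; cf.\ \cite{Aschbacher:2011}). Put $\Gamma:=\{P_1P_2\colon P_i\in\Delta_i\text{ for }i=1,2\}$, so that $\Delta$ is exactly the set of subgroups of $S$ containing a member of $\Gamma$. Since each $\Delta_i$ is closed under $\F_i$-conjugation, Lemma~\ref{CentralProductFusionSystems}(c) gives that $\Delta$ is closed under $\F$-conjugation and under overgroups in $S$. By Lemma~\ref{CentralProductFusionSystems}(a) we have $\F^{cr}=\{R_1R_2\colon R_i\in\F_i^{cr}\}\subseteq\Gamma\subseteq\Delta$, because $\F_i^{cr}\subseteq\Delta_i$. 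Finally, Lemma~\ref{CentralProductFusionSystems}(b) together with $\Delta_i\subseteq\F_i^s$ gives $\Gamma\subseteq\F^s$, and as $\F^s$ is closed under overgroups in $S$ by \cite[Theorem~A]{Henke:2015}, we conclude $\Delta\subseteq\F^s$.

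\emph{Part (b), the model.} Since $\F_i$ is saturated with $\F_i^{cr}\subseteq\Delta_i\subseteq\F_i^s$ and $\Delta_i$ closed under $\F_i$-conjugation and overgroups in $S_i$, \cite[Theorem~A]{Henke:2015} supplies a linking locality $(\L_i,\Delta_i,S_i)$ over $\F_i$. By Lemmas~\ref{DirectProductIsLocality} and~\ref{DirectProductLinkingLocality}, the external direct product $(\widetilde{\L},\widetilde{\Delta},\widetilde{S}):=(\L_1\times\L_2,\Delta_1*\Delta_2,S_1\times S_2)$ is a linking locality over $\F_1\times\F_2$. Let $\alpha\colon S_1\times S_2\to S$, $(x_1,x_2)\mapsto x_1x_2$, be the epimorphism realising the internal central product structure of $\F$, and set $Z:=\ker(\alpha)=\{(f,f^{-1})\colon f\in S_1\cap S_2\}$; as recorded in Subsection~\ref{DirectProductFusionSystemsSection} and the subsequent discussion of internal central products of fusion systems, $Z\leq Z(\F_1\times\F_2)$, $Z\cap\hat{S}_i=1$, and the induced group isomorphism $\ov{\alpha}\colon S_1\times_ZS_2\to S$ induces an isomorphism of fusion systems $(\F_1\times\F_2)/Z\to\F$ carrying $\ov{\F_i}$ to $\F_i$. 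Because $\widetilde{\L}$ is a linking locality one has $Z(\F_1\times\F_2)=Z(\widetilde{\L})$ (equivalently $Z(\F_i)\leq Z(\L_i)$), so $Z\leq Z(\widetilde{\L})\cap\widetilde{S}$ and $Z\cap(\L_i\iota_i)=\{\One\}$; hence the external central product of localities $(\widetilde{\L}/Z,\widetilde{\Delta}\theta,\widetilde{S}\theta)$ over $Z$ is defined, with $\theta\colon\widetilde{\L}\to\widetilde{\L}/Z$ the canonical projection, and by Lemma~\ref{ExternalCentralProductLemma}(b),(c) it is a linking locality over $(\F_1\times\F_2)/Z$, a fusion system on $\widetilde{S}\theta=S_1\times_ZS_2$.

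\emph{Part (b), gluing.} Chasing images of subgroups through the surjection $\alpha=\theta\,\ov{\alpha}$ gives $(\widetilde{\Delta}\theta)\ov{\alpha}=\Delta$, so Proposition~\ref{IsoFusionIsoLinkingLocality}, applied to the linking localities $(\widetilde{\L}/Z,\widetilde{\Delta}\theta,\widetilde{S}\theta)$ over $(\F_1\times\F_2)/Z$ and $(\L,\Delta,S)$ over $\F$ via the isomorphism $\ov{\alpha}$, yields an isomorphism of localities $\psi\colon\widetilde{\L}/Z\to\L$ with $\psi|_{\widetilde{S}\theta}=\ov{\alpha}$. By Example~\ref{CentralProductExternalInternal}(b), $(\widetilde{\L}/Z,\widetilde{\Delta}\theta,\widetilde{S}\theta)$ is the internal central product of the sublocalities $(\hat{\L}_i\theta,\hat{\Delta}_i\theta,\hat{S}_i\theta)$ for $i=1,2$, where $\hat{\L}_i=\L_i\iota_i$, and so on. Since $\psi$ is an isomorphism of localities, and hence a projection with trivial (in particular central) kernel, Lemma~\ref{CentralProductTranslateProjection} shows that $(\L,\Delta,S)$ is the internal central product of the sublocalities $(\L_i',\Delta_i',S_i'):=(\hat{\L}_i\theta\psi,\hat{\Delta}_i\theta\psi,\hat{S}_i\theta\psi)$ of $\L$. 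Finally, $\psi|_{\widetilde{S}\theta}=\ov{\alpha}$ gives $\hat{S}_i\theta\psi=S_i$ and $\hat{\Delta}_i\theta\psi=\Delta_i$, while combining Lemma~\ref{DirectProductLiSublocality} (which identifies $\F_{\hat{S}_i}(\hat{\L}_i)$ with $\hat{\F}_i$), \cite[Theorem~5.7(b)]{Henke:2015} (to carry the Sylow isomorphisms $\theta|_{\hat{S}_i}$ and $\psi|_{\hat{S}_i\theta}$ over to fusion systems), and the identity $\ov{\F_i}\ov{\alpha}=\F_i$ yields $\F_{S_i}(\L_i')=\F_i$, as required. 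The main obstacle I anticipate is precisely this bookkeeping — matching the object sets so that Proposition~\ref{IsoFusionIsoLinkingLocality} applies and tracking $\hat{\F}_i\mapsto\ov{\F_i}\mapsto\F_i$ through $\theta$ and $\psi$ — together with the one non-formal input, namely that $Z$ is a central $p$-subgroup of $\widetilde{\L}$ (equivalently $Z(\F_i)\leq Z(\L_i)$), which is where objective characteristic $p$ is used and which is what makes the external central product of localities available.
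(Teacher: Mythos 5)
Your proposal is correct and follows essentially the same route as the paper: part (a) via Lemma~\ref{CentralProductFusionSystems} and the overgroup-closure of $\F^s$, and part (b) by choosing linking localities over $\F_1$, $\F_2$, forming their external direct product and the central quotient by $Z=\ker(\alpha)$ (using that the centre of the fusion system lies in the centre of a linking locality, the paper's citation of \cite[Proposition~4]{Henke:2015}), invoking Proposition~\ref{IsoFusionIsoLinkingLocality} to identify this quotient with $(\L,\Delta,S)$, and transporting the internal central product structure via Example~\ref{CentralProductExternalInternal}(b) and Lemma~\ref{CentralProductTranslateProjection}, finishing with \cite[Theorem~5.7(b)]{Henke:2015} to identify $\F_{S_i}(\L_i)$ with $\F_i$. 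The bookkeeping you flag ($\Gamma\alpha=\Delta$, $\hat{S}_i\theta\psi=S_i$, $\hat{\Delta}_i\theta\psi=\Delta_i$) is carried out in the paper exactly as you describe.
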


\begin{proof}
By Lemma~\ref{CentralProductFusionSystems}(c), $\Delta$ is closed under taking $\F$-conjugates and overgroups in $S$. By Lemma~\ref{CentralProductFusionSystems}(a), $\F^{cr}\subseteq \Delta$. By \cite[Theorem~A(b)]{Henke:2015}, $\F^s$ is closed under taking overgroups. So Lemma~\ref{CentralProductFusionSystems}(b) implies $\Delta\subseteq\F^s$. This proves (a).

\smallskip

Write $\hat{\F}_i$ for the canonical image of $\F_i$ in $\F_1\times \F_2$.  
As $\F$ is the central product of $\F_1$ and $\F_2$, the map $\alpha\colon S_1\times S_2\rightarrow S,(s_1,s_2)\mapsto s_1s_2$ induces an epimorphism from $\F_1\times \F_2$ to $\F$ with $\hat{\F}_i\alpha=\F_i$. We have seen that  $Z:=\ker(\alpha)\leq Z(\F_1\times \F_2)$. 

\smallskip

For $i=1,2$ let $(\M_i,\Delta_i,S_i)$ be a linking locality over $\F_i$. Set $\M=\M_1\times \M_2$, $\Gamma=\Delta_1*\Delta_2$ and $T=S_1\times S_2$. Then $(\M,\Gamma,T)$ is a locality over $\F_1\times \F_2$ by Lemma~\ref{DirectProductIsLocality}. For $i=1,2$ let $\iota_i\colon \M_i\rightarrow \M_1\times\M_2$ be the inclusion map. By Lemma~\ref{DirectProductLiSublocality}, $(\M_i\iota_i,\Delta_i\iota_i,S_i\iota_i)$ is a sublocality of $(\M,\Gamma,T)$ with $\F_{S_i\iota_i}(\M_i\iota_i)=\hat{\F}_i$. 

\smallskip

As $(\M_i,\Delta_i,S_i)$ is a linking locality for $i=1,2$, $(\M,\Gamma,T)$ is a linking locality by Lemma~\ref{DirectProductLinkingLocality}. Hence, by \cite[Proposition~4]{Henke:2015}, $Z\leq Z(\M)$. Let $\rho\colon \M\rightarrow \M/Z$ be the canonical projection so that $(\M/Z,\Gamma\rho,T\rho)$ is the central product of $(\M_1,\Delta_1,S_1)$ and $(\M_2,\Delta_2,S_2)$ over $Z$. As seen in Example~\ref{CentralProductExternalInternal}, $(\M_i\iota_i\rho,\Delta_i\iota_i\rho,S_i\iota_i\rho)$ is a sublocality of $(\M/Z,\Gamma\rho,T\rho)$, and $\rho|_{\M_i\iota_i}\colon \M_i\iota_i\rightarrow \M_i\iota_i\rho$ is a projection of localities from $(\M_i\iota_i,\Delta_i\iota_i,S_i\iota_i)$ to $(\M_i\iota_i\rho,\Delta_i\iota_i\rho,S_i\iota_i\rho)$ for $i=1,2$. Moreover, $(\M/Z,\Gamma\rho,T\rho)$ is an internal central product of $(\M_1\iota_1\rho,\Delta_1\iota_1\rho,S_1\iota_1\rho)$ and $(\M_2\iota_2\rho,\Delta_2\iota_2\rho,S_2\iota_2\rho)$. 

\smallskip

By Lemma~\ref{ExternalCentralProductLemma}(b),(c), $(\M/Z,\Gamma\rho,T\rho)$ is a linking locality over $(\F_1\times\F_2)/Z$. As observed before, the map $\ov{\alpha}\colon T/Z\rightarrow S,Zt\mapsto t\alpha$ induces an isomorphism from $(\F_1\times\F_2)/Z$ to $\F$. Observe also that $(\rho|_T)\circ\ov{\alpha}=\alpha$ and thus $\Gamma\rho\ov{\alpha}=\Gamma\alpha=\Delta$. Hence, by Proposition~\ref{IsoFusionIsoLinkingLocality}, there exists $\beta\colon \M/Z\rightarrow \L$ such that $\beta$ is an isomorphism of localities from $(\M/Z,\Gamma\rho,T\rho)$ to $(\L,\Delta,S)$. Set $\L_i:=\M_i\iota_i\rho\beta$. As $(\rho|_T)\circ \ov{\alpha}=\alpha$, we have $\iota_i\circ\rho\circ\beta=\iota_i\circ\alpha=\id_{S_i}$ for $i=1,2$. In particular, $S_i\iota_i\rho\beta=S_i$ and $\Delta_i\iota_i\rho\beta=\Delta_i$ for $i=1,2$. So by Lemma~\ref{CentralProductTranslateProjection}, $(\L_i,\Delta_i,S_i)$ is a sublocality of $(\L,\Delta,S)$ for $i=1,2$,  $\beta|_{\M_i\iota_i\rho}\colon \M_i\iota_i\rho\rightarrow\L_i$ is a projection of localities from $(\M_i\iota_i\rho,\Delta_i\iota_i\rho,S_i\iota_i\rho)$ to $(\L_i,\Delta_i,S)$, and $(\L,\Delta,S)$ is a central product of $(\L_1,\Delta_1,S_1)$ and $(\L_2,\Delta_2,S_2)$. Observe that the composition of projections of localities is a projection of localities again. Hence, $\rho\circ\beta$ is a projection of localities from $(\M,\Gamma,T)$ to $(\L,\Delta,S)$, and for $i=1,2$,  $(\rho\circ\beta)|_{\M_i\iota_i}$ is a projection of localities from $(\M_i\iota_i,\Delta\iota_i,S_i\iota_i)$ to $(\L_i,\Delta_i,S_i)$. Hence, by \cite[Theorem~5.7(b)]{Henke:2015}, $(\rho\circ\beta)|_{S_i\iota_i}=\alpha|_{S_i\iota_i}$ induces an epimorphism from $\hat{\F}_i=\F_{S_i\iota_i}(\M_i\iota_i)$ to $\F_{S_1}(\L_1)$. Hence, $\F_{S_1}(\L_1)=\hat{\F}_i(\rho\circ\beta)|_{S_i\iota_i}=\hat{\F}_i\alpha=\F_i$. This completes the proof.
\end{proof}

\bibliographystyle{amsplain}
\bibliography{gpfus}

\providecommand{\bysame}{\leavevmode\hbox to3em{\hrulefill}\thinspace}
\providecommand{\MR}{\relax\ifhmode\unskip\space\fi MR }
\providecommand{\MRhref}[2]{%
  \href{http://www.ams.org/mathscinet-getitem?mr=#1}{#2}
}
\providecommand{\href}[2]{#2}
\begin{thebibliography}{10}

\bibitem{Andersen/Oliver/Ventura:2012}
K.~K.~S. Andersen, B.~Oliver, and J.~Ventura, \emph{Reduced, tame and exotic
  fusion systems}, Proc. Lond. Math. Soc. (3) \textbf{105} (2012), no.~1,
  87--152.

\bibitem{Aschbacher:2008}
M.~Aschbacher, \emph{Normal subsystems of fusion systems}, Proc. Lond. Math.
  Soc. (3) \textbf{97} (2008), no.~1, 239--271.

\bibitem{Aschbacher:2011}
M.~Aschbacher, \emph{The generalized fitting subsystem of a fusion system},
  Mem. Amer. Math. Soc. \textbf{209} (2011), no.~986, vi+110.

\bibitem{Aschbacher/Kessar/Oliver:2011}
M.~Aschbacher, R.~Kessar, and B.~Oliver, \emph{{Fusion systems in algebra and
  topology}}, London Math.\ Soc.\ Lecture Note Series, vol. 391, Cambridge
  University Press, 2011.

\bibitem{BLO2}
C.~Broto, R.~Levi, and B.~Oliver, \emph{{The homotopy theory of fusion
  systems}}, J.~Amer.\ Math.\ Soc. \textbf{16} (2003), 779--856.

\bibitem{Chermak:2013}
A.~Chermak, \emph{Fusion systems and localities}, Acta Math. \textbf{211}
  (2013), no.~1, 47--139.

\bibitem{Chermak:2015}
\bysame, \emph{Finite localities and projections}, preprint at
  http://arxiv.org/abs/1505.07786v2 (2015).

\bibitem{Henke:2015}
E.~Henke, \emph{Subcentric linking systems}, preprint at
  http://arxiv.org/abs/1506.01458v2 (2015).

\bibitem{MS:2012b}
U.~Meierfrankenfeld and B.~Stellmacher, \emph{Applications of the {FF}-{M}odule
  {T}heorem and related results}, J. Algebra \textbf{351} (2012), 64--106.

\bibitem{Oliver/Ventura}
B.~Oliver and J.~Ventura, \emph{Extensions of linking systems with {$p$}-group
  kernel}, Math. Ann. \textbf{338} (2007), no.~4, 983--1043.

\end{thebibliography}

\end{document}